\newtheorem{theorem}{Theorem}
\theoremstyle{plain}
\newtheorem{corollary}{Corollary}
\newtheorem{definition}{Definition}
\newtheorem{lemma}{Lemma}
\newtheorem{notation}{Notation}
\newtheorem{remark}{Remark}
\numberwithin{equation}{section}
\begin{document}
\title[]{A characterization of rough fractional type integral operators and
Campanato estimates for their commutators on the variable exponent vanishing
generalized Morrey spaces }
\author{FER\.{I}T G\"{U}RB\"{U}Z, SHENGHU D\.{I}NG, HU\.{I}L\.{I} HAN, P\.{I}%
NHONG LONG}
\address{HAKKARI UNIVERSITY, FACULTY OF EDUCATION, DEPARTMENT OF MATHEMATICS
EDUCATION, HAKKARI 30000, TURKEY }
\email{feritgurbuz@hakkari.edu.tr}
\address{NINGXIA UNIVERSITY, SCHOOL OF MATHEMATICS AND STATISTICS, YINCHUAN
750021, P.R. CHINA}
\email{dshnx2006@163.com, nxhan126@126.com, longph@nxu.edu.cn}
\urladdr{}
\thanks{}
\curraddr{ }
\urladdr{}
\thanks{}
\date{}
\subjclass[2000]{ 42B20, 42B35, 46E30}
\keywords{Fractional integral{\ operator; fractional maximal operator; rough
kernel; variable exponent; variable exponent generalized Morrey space;
commutator.}}
\dedicatory{}
\thanks{}

\begin{abstract}
In this paper, applying some properties of variable exponent analysis, we
first dwell on Adams and Spanne type estimates for a class of fractional
type integral operators of variable orders, respectively and then, obtain
variable exponent generalized Campanato estimates for the corresponding
commutators on the vanishing generalized Morrey spaces $VL_{\Pi }^{p\left(
\cdot \right) ,w\left( \cdot \right) }\left( E\right) $ with variable
exponent $p(\cdot )$ and bounded set $E$. In fact, the results in this paper
are generalizations of some known results on an operator basis.
\end{abstract}

\maketitle

\section{Introduction}

In this paper we mainly focus on some operators and commutators on the
variable exponent generalized Morrey type space. Precisely, our aim is to
characterize the boundedness for the maximal operator, fractional integral
operator and fractional maximal operator with rough kernel as well as the
corresponding commutators on the variable exponent vanishing generalized
Morrey spaces.

Now, we list some background material needed for later sections. We assume
that our readers are familiar with the foundation of real analysis. Since it
is impossible to squeeze everything into just a few pages, sometimes we will
refer the interested readers to some papers and references.

\begin{notation}
$\cdot $ Let $x=\left( x_{1},x_{2},\ldots ,x_{n}\right) $, $\xi =\left( \xi
_{1},\xi _{2},\ldots ,\xi _{n}\right) $ . . . . etc. be points of the real $%
n $-dimensional space ${\mathbb{R}^{n}}$. Let $x.\xi
=\dsum\limits_{i=1}^{n}x_{i}\xi _{i\text{ }}$stand for the usual dot product
in ${\mathbb{R}^{n}}$ and $\left\vert x\right\vert =\left(
\dsum\limits_{i=1}^{n}x_{i}^{2}\right) ^{\frac{1}{2}}$ for the Euclidean
norm of $x$.

$\cdot $ By $x^{\prime }$, we always mean the unit vector corresponding to $%
x $, i.e. $x^{\prime }=\frac{x}{|x|}$ for any $x\neq 0$.

$\cdot $ $S^{n-1}=\left\{ x\in {\mathbb{R}^{n}:}|x|=1\right\} $ represents
the unit sphere in Euclidean $n$-dimensional space ${\mathbb{R}^{n}}$ $%
(n\geq 2)$ and $dx^{\prime }$ is its surface measure.

$\cdot $ Denote by $\left\vert E\right\vert $ the Lebesgue measure and by $%
\chi _{E}$ the characteristic function for a measurable set $E\subset {%
\mathbb{R}^{n}}$.

$\cdot $ Given a function $f$, we denote the mean value of $f$ on $E$ by%
\begin{equation*}
f_{E}:=\frac{1}{\left\vert E\right\vert }\dint\limits_{E}f\left( x\right) dx.
\end{equation*}%
$\cdot $ $B(x,r)=\left\{ y\in {\mathbb{R}^{n}:}\left\vert x-y\right\vert
<r\right\} $ denotes $x$-centred Euclidean ball with radius $r$, $B^{C}(x,r)$
denotes its complement and $|B(x,r)|$ is the Lebesgue measure of the ball $%
B(x,r)$, $|B(x,r)|=v_{n}r^{n}$, where $v_{n}=|B(0,1)|=\frac{2\pi ^{\frac{n}{2%
}}}{n\Gamma \left( \frac{n}{2}\right) }$ and $\tilde{B}(x,r)=B(x,r)\cap E$,
where $E\subset {\mathbb{R}^{n}}$ is an open set. Finally, we use the
notation 
\begin{equation*}
f_{B\left( x,r\right) }=\frac{1}{\left\vert B\left( x,r\right) \right\vert }%
\int\limits_{\tilde{B}(x,r)}f\left( y\right) dy.
\end{equation*}

$\cdot $ $C$ stands for a positive constant that can change its value in
each statement without explicit mention.

$\cdot $ The exponents $p^{\prime }\left( \cdot \right) $ and $s^{\prime
}\left( \cdot \right) $ always denote the conjugate index of any exponent $%
1<p\left( x\right) <\infty $ and $1<s\left( x\right) <\infty $, that is, $%
\frac{1}{p^{\prime }\left( x\right) }:=1-\frac{1}{p\left( x\right) }$ and $%
\frac{1}{s^{\prime }\left( x\right) }:=1-\frac{1}{s\left( x\right) }$.

$\cdot $ In the sequel, for any exponent $1<p\left( x\right) <\infty $ and
bounded sets $E\subset $ ${\mathbb{R}^{n}}$, if we use 
\begin{equation}
\left\vert p\left( x\right) -p\left( y\right) \right\vert \leq \frac{-C}{%
\log \left( \left\vert x-y\right\vert \right) }\qquad \left\vert
x-y\right\vert \leq \frac{1}{2},\qquad x,y\in E{,}  \label{1}
\end{equation}%
where $C=C\left( p\right) >0$ does not depend on $x$, $y$, then we call that 
$p\left( \cdot \right) $ satisfies local log-H\"{o}lder continuity condition
or Dini-Lipschitz condition. The important role of local log-H\"{o}lder
continuity of $p\left( x\right) $ is well known in variable analysis. On the
other hand, the condition%
\begin{equation*}
\left\vert p\left( x\right) -p\left( y\right) \right\vert \leq \frac{C}{\log
\left( e+\left\vert x\right\vert \right) }\qquad \left\vert y\right\vert
\geq \left\vert x\right\vert ,\qquad x,y\in E,
\end{equation*}%
introduced by Cruz-Uribe et al. in \cite{U} is known as the log-H\"{o}lder
decay condition used for unbounded sets $E$. It is equivalent to the
condition that there exists a number $p_{\infty }\in \left[ 1,\infty \right) 
$ such that%
\begin{equation}
\left\vert \frac{1}{p_{\infty }}-\frac{1}{p\left( x\right) }\right\vert \leq 
\frac{C_{\infty }}{\log \left( e+\left\vert x\right\vert \right) }\qquad 
\text{for all }x\in E{.}  \label{2}
\end{equation}%
where $p_{\infty }=\lim\limits_{\left\vert x\right\vert \rightarrow \infty
}p\left( x\right) $.

If $p\left( \cdot \right) $ satisfies both (\ref{1}) and (\ref{2}), then we
say that it is log-H\"{o}lder continuous.

$\cdot $ Here and henceforth, $F\approx G$ means $F\gtrsim G\gtrsim F$;
while $F\gtrsim G$ means $F\geq CG$ for a constant $C>0$.

$\cdot $ Let $\Omega \in L_{s}(S^{n-1})$ with $1<s\leq \infty $ be
homogeneous function of degree $0$ on ${\mathbb{R}^{n}}$ and satisfy the
integral zero property over the unit sphere $S^{n-1}$. Moreover, note that $%
\left\Vert \Omega \right\Vert _{L_{s}\left( S^{n-1}\right) }:=\left(
\int\limits_{S^{n-1}}\left\vert \Omega \left( z^{\prime }\right) \right\vert
^{s}d\sigma \left( z^{\prime }\right) \right) ^{\frac{1}{s}}$ and%
\begin{eqnarray}
\left\Vert \Omega \left( z-y\right) \right\Vert _{L_{s}\left( \tilde{B}%
\left( x,r\right) \right) } &=&\left( \dint\limits_{\tilde{B}\left(
x,r\right) }\Omega \left( \left( z-y\right) \right) ^{s}dz\right) ^{\frac{1}{%
s}}  \notag \\
&\lesssim &\left( \dint\limits_{\tilde{B}\left( x,r\right) }\Omega \left(
\sigma \right) ^{s}\dint\limits_{0}^{r}\rho ^{n-1}d\rho d\sigma \right) ^{%
\frac{1}{s}}  \notag \\
&\lesssim &\left\Vert \Omega \right\Vert _{L_{s}\left( S^{n-1}\right) }r^{%
\frac{n}{s}},  \label{0*}
\end{eqnarray}%
for $z\in B(x,r)$.

$\cdot $ Suppose that $0<\alpha \left( x\right) <n$, $x\in E\subset {\mathbb{%
R}^{n}}$. Then, the rough Riesz type potential operator with variable order $%
I_{\Omega ,\alpha \left( \cdot \right) }$ and the corresponding rough
fractional maximal operator with variable order $M_{\Omega ,\alpha \left(
\cdot \right) }$ are defined, respectively, by%
\begin{equation*}
I_{\Omega ,\alpha \left( \cdot \right) }f(x)=\int\limits_{E}\frac{\Omega
(x-y)}{|x-y|^{n-\alpha \left( x\right) }}f(y)dy
\end{equation*}%
and%
\begin{equation*}
M_{\Omega ,\alpha \left( \cdot \right) }f(x)=\sup_{r>0}\left\vert
B(x,r)\right\vert ^{\frac{\alpha \left( x\right) }{n}-1}\int\limits_{\tilde{B%
}(x,r)}\left\vert \Omega \left( x-y\right) \right\vert |f(y)|dy,
\end{equation*}%
where $E\subset {\mathbb{R}^{n}}$ is an open set. On the other hand, if $%
\alpha \left( \cdot \right) =0$, then the rough Calder\'{o}n-Zygmund type
singular integral operator $T_{\Omega }$ in the sense of principal value
Cauchy integral is defined by 
\begin{equation*}
T_{\Omega }f(x)=p.v.\int\limits_{E}\frac{\Omega (x-y)}{|x-y|^{n}}f(y)dy,
\end{equation*}%
and especially in the limiting case $\alpha \left( \cdot \right) =0$, the
rough fractional maximal operator with variable order $M_{\Omega ,\alpha }$
reduces to the rough Hardy-Littlewood maximal operator $M_{\Omega }$ and $%
M_{\Omega }$ is also defined by%
\begin{equation*}
M_{\Omega }f\left( x\right) =\sup_{r>0}\frac{1}{\left\vert B(x,r)\right\vert 
}\int\limits_{\tilde{B}(x,r)}\left\vert \Omega \left( y\right) \right\vert
\left\vert f\left( x-y\right) \right\vert dy,
\end{equation*}%
where $E\subset {\mathbb{R}^{n}}$ is an open set. In fact, we can easily see
that when $\Omega \equiv 1$; $M_{1,\alpha \left( \cdot \right) }\equiv
M_{\alpha \left( \cdot \right) }$ and $I_{1,\alpha \left( \cdot \right)
}\equiv I_{\alpha \left( \cdot \right) }$ are the fractional maximal
operator with variable order and the Riesz type potential operator with
variable order, and similarly $M$ and $T$ are the Hardy-Littlewood maximal
operator and the standard Calder\'{o}n-Zygmund type singular integral
operator, respectively.

$\cdot $ Let $b$ be a locally integral function on $E$. Define the rough
commutators $\left[ b,T_{\Omega }\right] $, $\left[ b,M_{\Omega }\right] $
generated by the function $b$ and the operators $T_{\Omega }$, $M_{\Omega }$
with rough kernel $\Omega $ via%
\begin{eqnarray*}
\left[ b,T_{\Omega }\right] f\left( x\right) &=&b\left( x\right) T_{\Omega
}f\left( x\right) -T_{\Omega }\left( bf\right) \left( x\right) \\
&=&p.v.\int\limits_{E}\frac{\Omega (x-y)}{|x-y|^{n}}\left( b\left( x\right)
-b\left( y\right) \right) f(y)dy
\end{eqnarray*}%
and 
\begin{eqnarray*}
\left[ b,M_{\Omega }\right] f\left( x\right) &=&b\left( x\right) M_{\Omega
}f\left( x\right) -M_{\Omega }\left( bf\right) \left( x\right) \\
&=&\sup_{r>0}\frac{1}{\left\vert B(x,r)\right\vert }\int\limits_{\tilde{B}%
(x,r)}\left\vert \Omega \left( x-y\right) \right\vert \left\vert b\left(
x\right) -\left\vert b\left( y\right) \right\vert \right\vert \left\vert
f(y)\right\vert dy,
\end{eqnarray*}%
similarly, define the rough commutators $\left[ b,I_{\Omega ,\alpha \left(
\cdot \right) }\right] $, $\left[ b,M_{\Omega ,\alpha \left( \cdot \right) }%
\right] $ generated by the function $b$ and the fractional integral operator 
$I_{\Omega ,\alpha \left( \cdot \right) }$, the fractional maximal operator $%
M_{\Omega ,\alpha \left( \cdot \right) }$ with rough kernel $\Omega $ and
variable order $\alpha (\cdot )$($0\leq \alpha \left( \cdot \right) <n$) as
follows.

\begin{eqnarray*}
\left[ b,I_{\Omega ,\alpha \left( \cdot \right) }\right] f\left( x\right)
&=&b\left( x\right) I_{\Omega ,\alpha \left( \cdot \right) }f\left( x\right)
-I_{\Omega ,\alpha \left( \cdot \right) }\left( bf\right) \left( x\right) \\
&=&\int\limits_{E}\frac{\Omega (x-y)}{|x-y|^{n-\alpha \left( x\right) }}%
\left( b\left( x\right) -b\left( y\right) \right) f(y)dy
\end{eqnarray*}%
and 
\begin{eqnarray*}
\left[ b,M_{\Omega ,\alpha \left( \cdot \right) }\right] f\left( x\right)
&=&b\left( x\right) M_{\Omega ,\alpha \left( \cdot \right) }f\left( x\right)
-M_{\Omega ,\alpha \left( \cdot \right) }\left( bf\right) \left( x\right) \\
&=&\sup_{r>0}\left\vert B(x,r)\right\vert ^{\frac{\alpha \left( x\right) }{n}%
-1}\int\limits_{\tilde{B}(x,r)}\left\vert \Omega \left( x-y\right)
\right\vert \left\vert b\left( x\right) -\left\vert b\left( y\right)
\right\vert \right\vert \left\vert f(y)\right\vert dy.
\end{eqnarray*}
\end{notation}

Morrey spaces can complement the boundedness properties of operators that
Lebesgue spaces can not handle. Morrey spaces which we have been handling
are called classical Morrey spaces(see \cite{Morrey}). In this sense, the
classical Morrey spaces(see \cite{Morrey}) ever were applied to study the
local regularity behavior of solutions to second order elliptic partial
differential equations (see \cite{Gi} and \cite{Ta}). For the boundedness of
various classical operators in Morrey or Morrey type spaces, refer to for
maximal, potential, singular integral and others, \cite{Ad, AX, AX1, AX2,
AM, CF, KNS, So} and references therein. In \cite{Vi} the vanishing Morrey
space was introduced by Vitanza to character the regularity results for
elliptic partial differential equations. Moreover, Ragusa(\cite{Ra1})and
Samko et al(\cite{Sa2, Sa3} and references therein) ever systematically
obtain the boundedness of various classical operators in such these spaces.
Recently, while we try out to resolve somewhat modern problems emerging
inherently such that nonlinear elasticity theory, fluid mechanics etc., it
has become that classical function spaces are not anymore suitable spaces.
It thus became essential to introduce and analysis the diverse function
spaces from diverse viewpoints. One of such spaces is the variable exponent
Lebesgue space $L^{p\left( \cdot \right) }$. This space is a generalization
of the classical $L^{p}\left( 
\mathbb{R}
^{n}\right) $ space, in which the constant exponent $p$ is replaced by an
exponent function $p\left( \cdot \right) :%
\mathbb{R}
^{n}\rightarrow \left( 0,\infty \right) $, it consists of all functions $f$
such that $\dint\limits_{%
\mathbb{R}
^{n}}\left\vert f\left( x\right) \right\vert ^{p\left( x\right) }dx$. This
theory got a boost in 1931 when Orlicz published his seminal paper \cite%
{Orlicz}. The next major step in the investigation of variable exponent
spaces was the comprehensive paper by Kov\'{a}\v{c}ik and R\'{a}kosn\'{\i}k
in the early 90's \cite{Kovacik}. Since then, the theory of variable
exponent spaces was applied to many fields, refer to \cite{CLR, Wu} for the
image processing, \cite{AR} for thermorheological fluids, \cite{Ru} for
electrorheological fluids and \cite{HHLN} for the differential equations
with nonstandard growth. For the nonweighted and weighted variable exponent
settings, refer to \cite{DHHR,DHN,DR,FZh}. On the other hand, Kov\'{a}\v{c}%
ik and R\'{a}kosn\'{\i}k \cite{Kovacik} established many of the basic
properties of Lebesgue and Sobolev spaces. Moreover, since these authors
clarified fundamental properties of the variable exponent Lebesgue and
Sobolev spaces, there are many spaces studied, such as variable exponent
Morrey, generalized Morrey, vanishing generalized Morrey, Herz-Morrey
spaces, etc. see \cite{Almeida, Guliyev, Ho, Long, Wu1}. In the last decade,
when the parameters that define the operator have changed from point to
point, there has been a strong interest in fractional type operators and the
"variable setting" function spaces. The field called variable exponent
analysis has become a fairly branched area with many interesting results
obtained in the last decade such as harmonic analysis, approximation theory,
operator theory, pseudo-differential operators, etc. But, the results in
this paper lie in these spaces known as variable exponent Morrey type spaces
on the rough fractional type operators with variable order of harmonic
analysis, which has been extensively developed during the last ten years and
continues to attract attention of researchers from various fields of
mathematics. Many of problems about such spaces have been solved both in the
classical setting and in the Euclidean setting, including fractional upper
and lower dimensions. For example, in 2008 variable exponent Morrey spaces $%
L^{p\left( \cdot \right) ,\lambda \left( \cdot \right) }$ were introduced to
study the boundedness of $M$ and $I_{\alpha \left( \cdot \right) }$ in the
Euclidean setting by Almeida et al. \cite{Almeida}. In 2010, variable
exponent generalized Morrey spaces $L^{p\left( \cdot \right) ,w\left( \cdot
\right) }\left( E\right) $ were introduced to consider the boundedness of $M$%
, $I_{\alpha \left( \cdot \right) }$, $T$ for bounded sets $E\subset $ ${%
\mathbb{R}^{n}}$ on $\ L^{p\left( \cdot \right) ,w\left( \cdot \right)
}\left( E\right) $ in \cite{Guliyev}. In 2016, variable exponent vanishing
generalized Morrey spaces $VL_{\Pi }^{p\left( \cdot \right) ,w\left( \cdot
\right) }\left( E\right) $ were introduced to characterize the boundedness
of $M$, $I_{\alpha \left( \cdot \right) }$, $T$ for bounded or unbounded
sets $E$ on $VL_{\Pi }^{p\left( \cdot \right) ,w\left( \cdot \right) }\left(
E\right) $ in \cite{Long}.

After the boundedness of $M$, $I_{\alpha \left( \cdot \right) }$, $T$ for
bounded sets $E\subset $ ${\mathbb{R}^{n}}$ both on $L^{p\left( \cdot
\right) ,w\left( \cdot \right) }\left( E\right) $ and $VL_{\Pi }^{p\left(
\cdot \right) ,w\left( \cdot \right) }\left( E\right) $ have been
established in \cite{Guliyev, Long}, a natural question is: Can these
results be generalized? In other words, what properties do the more general
operators $M_{\Omega ,\alpha \left( \cdot \right) }$ and $I_{\Omega ,\alpha
\left( \cdot \right) }$ have for bounded sets $E\subset $ ${\mathbb{R}^{n}}$
both on $L^{p\left( \cdot \right) ,w}\left( E\right) $ and $VL_{\Pi
}^{p\left( \cdot \right) ,w}\left( E\right) $? We give answers to these
questions in this paper. In view of the definitions of $M_{\Omega }$, $%
I_{\Omega ,\alpha \left( \cdot \right) }$ and $T_{\Omega }$ above, we see
that these operators are generalizations of the operators $M$, $I_{\alpha
\left( \cdot \right) }$, $T$. On the other hand, recently, Rafeiro and Samko 
\cite{Rafeiro} proved that the boundedness of $I_{\Omega ,\alpha \left(
\cdot \right) }$, $M_{\Omega ,\alpha \left( \cdot \right) }$ and $M_{\Omega
} $ for bounded sets $E\subset $ ${\mathbb{R}^{n}}$ both on $L^{p\left(
\cdot \right) }$ and $L^{p\left( \cdot \right) ,\lambda \left( \cdot \right)
}$, respectively.

\section{Preliminaries and Main results}

In this section, we recall the definitions and some properties of basic
spaces that we need and also give the main results.

\subsection{Preliminaries on \textbf{variable exponent Lebesgue spaces }$%
L^{p\left( \cdot \right) }$}

\begin{flushleft}
We first define variable exponent Lebesgue space.
\end{flushleft}

\begin{definition}
Given an open set $E\subset {\mathbb{R}^{n}}$ and a measurable function $%
p\left( \cdot \right) :E\rightarrow \left[ 1,\infty \right) $. We assume
that $1\leq p_{-}\left( E\right) \leq p_{+}\left( E\right) <\infty $, where $%
p_{-}\left( E\right) =\limfunc{essinf}\limits_{x\in E}p\left( x\right) $ and 
$p_{+}\left( E\right) =\limfunc{esssup}\limits_{x\in E}p\left( x\right) $.
The variable exponent Lebesgue space $L^{p\left( \cdot \right) }\left(
E\right) $ is the collection of all measurable functions $f$ such that, for
some $\lambda >0$, $\rho \left( f/\lambda \right) <\infty $, where the
modular is defined by%
\begin{equation*}
\rho \left( f\right) =\rho _{p\left( \cdot \right) }\left( f\right) =\dint
\limits_{E}\left \vert f\left( x\right) \right \vert ^{p\left( x\right) }dx.
\end{equation*}%
Then, the spaces $L^{p\left( \cdot \right) }\left( E\right) $ and $%
L_{loc}^{p\left( \cdot \right) }\left( E\right) $ are defined by%
\begin{equation*}
L^{p\left( \cdot \right) }\left( E\right) =\left \{ f\text{ is measurable}%
:\rho _{p\left( \cdot \right) }\left( f/\lambda \right) <\infty \text{ for
some }\lambda >0\right \}
\end{equation*}%
and%
\begin{equation*}
L_{loc}^{p\left( \cdot \right) }\left( E\right) =\left \{ f\text{ is
measurable}:f\in L^{p\left( \cdot \right) }\left( K\right) \text{ for all
compact }K\subset E\right \} ,
\end{equation*}%
with the Luxemburg norm 
\begin{equation}
\left \Vert f\right \Vert _{L^{p\left( \cdot \right) }\left( E\right) }=\inf
\left \{ \lambda >0:\rho _{p\left( \cdot \right) }\left( f/\lambda \right)
=\dint \limits_{E}\left( \left \vert f\left( x\right) \right \vert /\lambda
\right) ^{p\left( x\right) }dx\leq 1\right \} \qquad f\in L^{p\left( \cdot
\right) }\left( E\right) .  \label{0}
\end{equation}%
Since $p_{-}\left( E\right) \geq 1$, $\left \Vert \cdot \right \Vert
_{L^{p\left( \cdot \right) }\left( E\right) }$ is a norm and $\left(
L^{p\left( \cdot \right) }\left( E\right) ,\left \Vert \cdot \right \Vert
_{L^{p\left( \cdot \right) }\left( E\right) }\right) $ is a Banach space.
However, if $p_{-}\left( E\right) <1$, then $\left \Vert \cdot \right \Vert
_{L^{p\left( \cdot \right) }\left( E\right) }$ is a quasinorm and $\left(
L^{p\left( \cdot \right) }\left( E\right) ,\left \Vert \cdot \right \Vert
_{L^{p\left( \cdot \right) }\left( E\right) }\right) $ is a quasi Banach
space. The variable exponent norm has the following property%
\begin{equation*}
\left \Vert f^{\lambda }\right \Vert _{L^{p\left( \cdot \right) }\left(
E\right) }=\left \Vert f\right \Vert _{L^{\lambda p\left( \cdot \right)
}\left( E\right) }^{\lambda },
\end{equation*}%
for $\lambda \geq \frac{1}{p_{-}}$. Moreover, these spaces are referred to
as variable $L^{p}$ spaces, since they generalize the standard $L^{p}$
spaces: if $p\left( x\right) =p$ is constant, then $L^{p\left( \cdot \right)
}\left( E\right) $ is isometrically isomorphic to $L^{p}\left( E\right) $.
As a result, using notations above ($p_{-}\left( E\right) $ and $p_{+}\left(
E\right) $), we define a class of variable exponent as follows:%
\begin{equation*}
\Phi \left( E\right) =\left \{ p\left( \cdot \right) :E\rightarrow \left[
1,\infty \right) ,\text{ }p_{-}\left( E\right) \geq 1\text{, }p_{+}\left(
E\right) <\infty \right \} .
\end{equation*}
\end{definition}

Now, we define two the sets of exponents $p\left( x\right) $ with $1\leq
p_{-}\left( E\right) \leq p_{+}\left( E\right) <\infty $. These will be
denoted by as follows:%
\begin{equation*}
\mathcal{P}^{\log }\left( E\right) =\left\{ 
\begin{array}{c}
p\left( \cdot \right) :p_{-}\left( E\right) \geq 1\text{, }p_{+}\left(
E\right) <\infty \text{ and }p\left( \cdot \right) \text{ satisfy both the
conditions (\ref{1}) and (\ref{2}) } \\ 
\text{(the latter required if }E\text{ is unbounded)}%
\end{array}%
\right\}
\end{equation*}%
and%
\begin{equation*}
\mathcal{B}\left( E\right) =\left\{ p\left( \cdot \right) :p\left( \cdot
\right) \in \mathcal{P}^{\log }\left( E\right) \text{, }M\text{ is bounded
on }L^{p\left( \cdot \right) }\left( E\right) \right\} ,
\end{equation*}%
where $M$ is the Hardy-Littlewood maximal operator. We recall that the
generalized H\"{o}lder inequality on Lebesgue spaces with variable exponent 
\begin{equation*}
\left\vert \dint\limits_{E}f\left( x\right) g\left( x\right) dx\right\vert
\leq \dint\limits_{E}\left\vert f\left( x\right) g\left( x\right)
\right\vert dx\leq C_{p}\left\Vert f\right\Vert _{L^{p\left( \cdot \right)
}\left( E\right) }\left\Vert g\right\Vert _{L^{p^{\prime }\left( \cdot
\right) }\left( E\right) }\qquad C_{p}=1+\frac{1}{p_{-}}-\frac{1}{p_{+}},
\end{equation*}%
is known to hold for $p\left( \cdot \right) \in \Phi \left( E\right) $, $%
f\in L^{p\left( \cdot \right) }\left( E\right) $ and $g\in L^{p^{\prime
}\left( \cdot \right) }\left( E\right) $, see Theorem 2.1 in \cite{Kovacik}.
Now, we recall some recent results for the rough Riesz type potential
operator with variable order $I_{\Omega ,\alpha \left( \cdot \right) }$ and
the corresponding rough fractional maximal operator with variable order $%
M_{\Omega ,\alpha \left( \cdot \right) }$ on variable exponent Lebesgue
space $L^{p\left( \cdot \right) }\left( E\right) $. The order $\alpha \left(
x\right) $ of the potential is not assumed to be continuous. We assume that
it is a measurable function on $E$ satisfying the following assumptions%
\begin{equation}
\left. 
\begin{array}{c}
\alpha _{0}=\limfunc{essinf}\limits_{x\in E}\alpha \left( x\right) >0 \\ 
\limfunc{esssup}\limits_{x\in E}\alpha \left( x\right) p\left( x\right) <n%
\end{array}%
\right\} .  \label{25}
\end{equation}%
First, the norm in the space $L^{p\left( \cdot \right) }\left( E\right) $
seems to be complicated in a sense, to be calculated or estimated. So the
following basic estimation of the boundedness of an operator $B$:%
\begin{equation}
\left\Vert Bf\right\Vert _{L^{p\left( \cdot \right) }\left( E\right)
}\lesssim \left\Vert f\right\Vert _{L^{p\left( \cdot \right) }\left(
E\right) }  \label{62}
\end{equation}%
is not easy. However, in the case of linear operators, the above inequality
between the norm and the modular and the homogeneity property%
\begin{equation*}
\left\Vert B\right\Vert _{X\rightarrow X}=\sup_{f\in X}\frac{\left\Vert
Bf\right\Vert _{X}}{\left\Vert f\right\Vert _{X}}=\sup_{\left\Vert
f\right\Vert _{X}=1}\left\Vert Bf\right\Vert _{X}
\end{equation*}%
allow us to replace checking of (\ref{62}) by a work with a modular:%
\begin{equation*}
\dint\limits_{E}\left\vert Bf\left( x\right) \right\vert ^{p\left( x\right)
}dx\text{, for all }f\text{ with }\left\Vert f\right\Vert _{L^{p\left( \cdot
\right) }\left( E\right) }\leq 1,
\end{equation*}%
which is certainly easier. In that respect, the boundedness of the rough
Riesz-type potential operator from the space $L^{p\left( \cdot \right)
}\left( 
\mathbb{R}
^{n}\right) $ with the variable exponent $p(x)$ into the space $L^{q\left(
\cdot \right) }\left( 
\mathbb{R}
^{n}\right) $ with the limiting Sobolev exponent%
\begin{equation}
\frac{1}{q\left( x\right) }=\frac{1}{p\left( x\right) }-\frac{\alpha \left(
x\right) }{n}  \label{26}
\end{equation}%
was an open problem for a long time. It was solved in the case of bounded
domains. First, in \cite{Rafeiro}, in the case of bounded domains $E$, there
has the following conditional result.

\begin{theorem}
\label{teo1}Let $E$ be a bounded open set, $\Omega \in L_{s}(S^{n-1})$ with $%
1<s\leq \infty $, $p\left( x\right) \in \mathcal{P}^{\log }\left( E\right) $%
, $\alpha \left( x\right) $ satisfy the assumptions (\ref{25}) and $\left(
p^{\prime }\right) _{+}\leq s$. Define $q\left( x\right) $ by (\ref{26}).
Then, the rough Riesz-type potential operator $I_{\Omega ,\alpha \left(
\cdot \right) }$ is $\left( L^{p\left( \cdot \right) }\left( E\right)
\rightarrow L^{q\left( \cdot \right) }\left( E\right) \right) $-bounded,
that is, the Sobolev type theorem%
\begin{equation}
\left\Vert I_{\Omega ,\alpha \left( \cdot \right) }f\right\Vert _{L^{q\left(
\cdot \right) }\left( E\right) }\lesssim \left\Vert f\right\Vert
_{L^{p\left( \cdot \right) }\left( E\right) }  \label{5}
\end{equation}%
is valid.
\end{theorem}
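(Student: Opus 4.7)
The plan is to establish \eqref{5} via a Hedberg-type pointwise estimate combined with the boundedness of $M_{\Omega}$ on $L^{p(\cdot)}(E)$. The latter can be obtained beforehand: the pointwise majorization $M_{\Omega}f(x)\lesssim \|\Omega\|_{L_{s}(S^{n-1})}\bigl[M(|f|^{s'})(x)\bigr]^{1/s'}$, together with the fact that $(p')_{+}\leq s$ implies $s'\leq p_{-}$, so $p(\cdot)/s'\in \mathcal{P}^{\log}(E)\subset \mathcal{B}(E)$, yields $\|M_{\Omega}f\|_{L^{p(\cdot)}(E)}\lesssim \|f\|_{L^{p(\cdot)}(E)}$.

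For each $x\in E$ and $r>0$ I would split
\begin{equation*}
I_{\Omega,\alpha(\cdot)}f(x)=\int_{|x-y|<r}\frac{\Omega(x-y)}{|x-y|^{n-\alpha(x)}}f(y)\,dy+\int_{|x-y|\geq r}\frac{\Omega(x-y)}{|x-y|^{n-\alpha(x)}}f(y)\,dy=:A(x,r)+B(x,r).
\end{equation*}
The near part $A(x,r)$ is handled by dyadic decomposition into annuli $A_{j}=\{2^{-j-1}r\leq|x-y|<2^{-j}r\}$: on each piece $|x-y|^{\alpha(x)-n}\lesssim (2^{-j}r)^{\alpha(x)-n}$, and the integral of $|\Omega(x-y)||f(y)|$ over $A_{j}$ is bounded by $(2^{-j}r)^{n}M_{\Omega}f(x)$. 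Summing the geometric series $\sum_{j\geq 0}2^{-j\alpha(x)}$, which converges uniformly in $x$ because $\alpha_{0}>0$, gives $|A(x,r)|\lesssim r^{\alpha(x)}M_{\Omega}f(x)$.

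For the far part $B(x,r)$, I would apply the generalized H\"{o}lder inequality in the variable exponent Lebesgue space:
\begin{equation*}
|B(x,r)|\lesssim \|f\|_{L^{p(\cdot)}(E)}\left\|\frac{|\Omega(x-\cdot)|}{|x-\cdot|^{n-\alpha(x)}}\chi_{\{|x-\cdot|\geq r\}}\right\|_{L^{p'(\cdot)}(E)}.
\end{equation*}
A second dyadic decomposition over annuli $\{2^{k}r\leq|x-y|<2^{k+1}r\}$, together with the condition $(p')_{+}\leq s$ (which allows H\"{o}lder with constant exponent $s,s'$ on each annulus to pull out $\Omega$ via \eqref{0*}), and the comparison $\|g\|_{L^{p'(\cdot)}(B)}\lesssim \|g\|_{L^{s}(B)}|B|^{1/p'(x)-1/s}$ with frozen $x$, reduces the latter norm to a geometric series $\sum_{k\geq 0}2^{k(\alpha(x)-n/p(x))}$; this series converges by (\ref{25})--\eqref{26}, yielding $|B(x,r)|\lesssim r^{\alpha(x)-n/p(x)}\|f\|_{L^{p(\cdot)}(E)}$.

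Combining the two bounds and optimizing in $r$ by setting $r=\bigl(\|f\|_{L^{p(\cdot)}(E)}/M_{\Omega}f(x)\bigr)^{p(x)/n}$ produces the Hedberg-type pointwise inequality
\begin{equation*}
|I_{\Omega,\alpha(\cdot)}f(x)|\lesssim \bigl[M_{\Omega}f(x)\bigr]^{p(x)/q(x)}\,\|f\|_{L^{p(\cdot)}(E)}^{1-p(x)/q(x)}.
\end{equation*}
By the homogeneity of the norm, one may normalize $\|f\|_{L^{p(\cdot)}(E)}=1$, in which case $|I_{\Omega,\alpha(\cdot)}f(x)|^{q(x)}\lesssim [M_{\Omega}f(x)]^{p(x)}$; integrating over $E$, using boundedness of $M_{\Omega}$ on $L^{p(\cdot)}(E)$ to control the modular, and passing back from the modular to the Luxemburg norm via \eqref{0} gives \eqref{5}.

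The main obstacle is the precise control of the $L^{p'(\cdot)}$-norm in the far-part estimate: the exponent $p'$ varies with $y$ while the parameter $\alpha(x)$ is frozen, so one cannot directly integrate in spherical coordinates. This is exactly where the hypothesis $(p')_{+}\leq s$ enters, allowing $\Omega$ to be extracted via constant-exponent H\"{o}lder on each dyadic annulus before the variable exponent Luxemburg norm is assembled; the boundedness of $E$ (hence finiteness of $p_{+}$) then guarantees that the ambient exponents behave well in the geometric series.
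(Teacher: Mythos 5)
The paper does not actually prove Theorem~\ref{teo1}; it cites it from Rafeiro and Samko~\cite{Rafeiro}. Your proposal is, however, exactly the Hedberg-trick strategy that the paper itself deploys later in the proof of Lemma~\ref{Lemma2} (the Morrey-space variant) and that Rafeiro--Samko use: near part controlled by $r^{\alpha(x)}M_{\Omega}f(x)$, far part controlled by $r^{\alpha(x)-n/p(x)}\|f\|_{L^{p(\cdot)}(E)}$, optimize in $r$, and integrate the resulting modular using boundedness of $M_{\Omega}$. The outline is sound and the choice $r=\bigl(\|f\|_{L^{p(\cdot)}}/M_{\Omega}f(x)\bigr)^{p(x)/n}$, together with $p(x)/q(x)=1-\alpha(x)p(x)/n$, gives the correct Hedberg pointwise bound.

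Two points need tightening. First, your asserted inclusion $\mathcal{P}^{\log}(E)\subset\mathcal{B}(E)$ is backwards as the paper defines these classes ($\mathcal{B}(E)$ is by definition a subset of $\mathcal{P}^{\log}(E)$). What you want is Diening's theorem that $M$ is bounded on $L^{p(\cdot)}(E)$ when $p(\cdot)\in\mathcal{P}^{\log}(E)$ \emph{and} $p_{-}>1$; applied to $p(\cdot)/s'$ this requires $p_{-}>s'$, i.e. $(p')_{+}<s$ \emph{strictly}, whereas the hypothesis is the weak inequality $(p')_{+}\leq s$. Indeed the paper treats $p/s'\in\mathcal{B}(E)$ as an extra hypothesis in Corollary~\ref{Corollary0}, so your reduction to $\mathcal{B}(E)$ must either assume strict inequality or be cited as a hypothesis, not derived. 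Second, in the far part the passage from $\bigl\|\,|\Omega(x-\cdot)|\,|x-\cdot|^{\alpha(x)-n}\chi_{A_{k}}\bigr\|_{L^{p'(\cdot)}}$ to the factor $(2^{k}r)^{\alpha(x)-n/p(x)}$ rests on two embeddings you only gesture at: the constant-exponent embedding $L^{s}(A_{k})\hookrightarrow L^{p'(\cdot)}(A_{k})$ (valid since $(p')_{+}\leq s$ and $E$ is bounded, but its operator norm involves $|A_{k}|$ and must be tracked), and the freezing of the exponent $1/p'(y)\rightsquigarrow 1/p'(x)$ on $A_{k}$ (which is precisely the content of Lemma~\ref{Lemma1} and (\ref{100}), via log-H\"older continuity). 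These can be made rigorous, but as written they are the crux and should be spelled out rather than compressed into one clause. With those repairs the argument is essentially the standard one, consistent with the machinery the paper uses elsewhere.
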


\begin{corollary}
\label{Corollary0}Let $E$ be a bounded open set, $\Omega \in L_{s}(S^{n-1})$
with $1<s\leq \infty $ be homogeneous function of degree $0$ on ${\mathbb{R}%
^{n}}$, $\frac{p}{s^{\prime }}\in \mathcal{B}\left( E\right) $ and $\left(
p^{\prime }\right) _{+}\leq s$. Under the conditions of Theorem \ref{teo1}
(taking $\alpha \left( \cdot \right) =0$ there), the operator $T_{\Omega }$
is $\left( L^{p\left( \cdot \right) }\left( E\right) \rightarrow L^{p\left(
\cdot \right) }\left( E\right) \right) $-bounded, that is,%
\begin{equation}
\left\Vert T_{\Omega }f\right\Vert _{L^{p\left( \cdot \right) }\left(
E\right) }\lesssim \left\Vert f\right\Vert _{L^{p\left( \cdot \right)
}\left( E\right) }  \label{5*}
\end{equation}%
is valid.
\end{corollary}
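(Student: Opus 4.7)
My plan is to imitate the proof of Theorem~\ref{teo1} at $\alpha(\cdot)\equiv 0$, compensating for the loss of the regularizing factor $|x-y|^{\alpha(x)}$ by the extra hypothesis $\frac{p}{s'}\in\mathcal{B}(E)$. Fix $f\in L^{p(\cdot)}(E)$; for each $x\in E$ and $r>0$ I would split $f=f_{1}+f_{2}$ with $f_{1}=f\chi_{\tilde{B}(x,2r)}$, so that $T_{\Omega}f=T_{\Omega}f_{1}+T_{\Omega}f_{2}$. The two pieces are then handled by separate methods.

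For the far-away part $T_{\Omega}f_{2}(x)$, the plan is to use the dyadic annular decomposition $E\setminus B(x,2r)=\bigcup_{k\geq 1}A_{k}$ with $A_{k}=B(x,2^{k+1}r)\setminus B(x,2^{k}r)$, apply H\"older's inequality with exponents $s$ and $s'$ on each $A_{k}$, and invoke the kernel bound (\ref{0*}). Summing the resulting geometric series in $k$ should yield a Hedberg-type pointwise estimate
\begin{equation*}
|T_{\Omega}f_{2}(x)|\lesssim \|\Omega\|_{L^{s}(S^{n-1})}\bigl(M(|f|^{s'})(x)\bigr)^{1/s'}.
\end{equation*}
The side condition $(p')_{+}\leq s$ guarantees $s'\leq p_{-}(E)$, so that raising $|f|$ to the $s'$-th power is compatible with the variable exponent $\frac{p(\cdot)}{s'}$ living in $\mathcal{B}(E)$.

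For the local part $T_{\Omega}f_{1}(x)$ one cannot obtain such a pointwise bound directly, because the principal-value kernel has no sign. I would instead rely on the classical weighted estimate for rough Calder\'on--Zygmund singular integrals (of Duoandikoetxea--Watson--Coifman type), which gives $L^{p}(w)$-boundedness of $T_{\Omega}$ for $w\in A_{p/s'}$, and transfer it to $L^{p(\cdot)/s'}(E)$ via Cruz-Uribe--Fiorenza--Martell--P\'erez extrapolation; this is precisely where the hypothesis $\frac{p}{s'}\in\mathcal{B}(E)$ is consumed. Taking $L^{p(\cdot)}(E)$-norms of both pieces and using the rescaling identity $\|g\|_{L^{p(\cdot)}(E)}=\bigl\||g|^{s'}\bigr\|^{1/s'}_{L^{p(\cdot)/s'}(E)}$ together with the boundedness of $M$ on $L^{p(\cdot)/s'}(E)$ should produce (\ref{5*}).

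The main obstacle is the local piece. The $\alpha$-gain available in Theorem~\ref{teo1} is absent here, so an argument based purely on pointwise domination by a fractional-type maximal operator is no longer available; one must instead pass through constant-exponent weighted theory or a sharp-maximal-function estimate of the form $(T_{\Omega}f)^{\#}\lesssim (M(|f|^{s'}))^{1/s'}$ combined with a variable-exponent Fefferman--Stein inequality. Verifying that the Muckenhoupt condition appearing in the weighted constant-exponent theorem is genuinely compatible with $\frac{p}{s'}\in\mathcal{B}(E)$ in the framework of the extrapolation theorem is the delicate point that distinguishes Corollary~\ref{Corollary0} from the Sobolev-type Theorem~\ref{teo1}.
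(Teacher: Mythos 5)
Your far-part estimate is incorrect: once $\alpha(\cdot)\equiv 0$ the contributions of the dyadic annuli $A_k=B(x,2^{k+1}r)\setminus B(x,2^kr)$ to $T_\Omega f_2(x)$ do not form a geometric series. After H\"older and (\ref{0*}) the $k$-th annulus gives
\begin{equation*}
(2^kr)^{-n}\,\|\Omega\|_{L^s(S^{n-1})}(2^kr)^{n/s}\,\|f\|_{L^{s'}(B(x,2^{k+1}r))}\lesssim\|\Omega\|_{L^s(S^{n-1})}\bigl(M(|f|^{s'})(x)\bigr)^{1/s'},
\end{equation*}
because $n/s+n/s'=n$ exactly cancels $(2^kr)^{-n}$, leaving no decay in $k$. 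Summing over $k$ therefore costs a factor of order $\log\bigl(diam(E)/r\bigr)$, which blows up as $r\to0$. This is precisely the regularizing role of $\alpha>0$ in Hedberg's trick: the annular sum in Lemma \ref{Lemma2} converges only thanks to the extra $(2^kr)^{-\alpha}$ decay, and it simply does not converge at $\alpha=0$. So the near/far split cannot be adapted to $T_\Omega$, and the pointwise bound $|T_\Omega f_2(x)|\lesssim(M(|f|^{s'})(x))^{1/s'}$ you assert is false.

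The decomposition is also structurally redundant: for the local piece you fall back on weighted Duoandikoetxea--Watson estimates for rough singular integrals plus Cruz-Uribe--Fiorenza--Martell--P\'erez extrapolation, but once those tools are invoked they already give $\|T_\Omega f\|_{L^{p(\cdot)}(E)}\lesssim\|f\|_{L^{p(\cdot)}(E)}$ for the full $f$, so the split into $f_1+f_2$ accomplishes nothing. The argument you gesture at in your final paragraph is the one that works and should be run directly: take the constant-exponent weighted inequality $\|T_\Omega g\|_{L^{s'q}(w)}\lesssim\|g\|_{L^{s'q}(w)}$ for $w\in A_q$, specialize to $q=1$, and apply variable-exponent extrapolation with base exponent $p_0=s'$. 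The hypothesis $(p')_+\leq s$ gives $s'\leq p_-$, and $\tfrac{p}{s'}\in\mathcal{B}(E)$ gives boundedness of $M$ on $L^{p(\cdot)/s'}(E)$ (equivalently, by Diening's duality, on $L^{(p(\cdot)/s')'}(E)$), which is exactly what the extrapolation theorem needs. Note also that the paper itself supplies no proof of this Corollary -- it is stated as a consequence of \cite{Rafeiro} -- so there is no near/far decomposition in the paper to imitate; you should drop the split entirely.
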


On the other hand, the pointwise inequalities on variable exponent Lebesgue
spaces are very useful. Indeed, we have%
\begin{equation*}
\left\vert f\left( x\right) \right\vert \leq \left\vert h\left( x\right)
\right\vert \text{ implies that }\left\Vert f\right\Vert _{L^{p\left( \cdot
\right) }\left( E\right) }\lesssim \left\Vert h\right\Vert _{L^{p\left(
\cdot \right) }\left( E\right) }.
\end{equation*}%
Thus, if one operator is pointwise dominated by another one:%
\begin{equation*}
\left\vert Bf\left( x\right) \right\vert \leq \left\vert Df\left( x\right)
\right\vert ,
\end{equation*}%
and we know that the operator $D$ is bounded, then the boundedness of the
operator $B$ immediately follows. For example, by Theorem \ref{teo1} we get
the following:

\begin{theorem}
\label{teo3}Under the conditions of Theorem \ref{teo1}, the operator $%
M_{\Omega ,\alpha \left( \cdot \right) }$ is $\left( L^{p\left( \cdot
\right) }\left( E\right) \rightarrow L^{q\left( \cdot \right) }\left(
E\right) \right) $-bounded, that is, the Sobolev type theorem%
\begin{equation}
\left\Vert M_{\Omega ,\alpha \left( \cdot \right) }f\right\Vert _{L^{q\left(
\cdot \right) }\left( E\right) }\lesssim \left\Vert f\right\Vert
_{L^{p\left( \cdot \right) }\left( E\right) }  \label{6}
\end{equation}%
is valid.
\end{theorem}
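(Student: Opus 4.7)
The plan is to reduce Theorem \ref{teo3} to the $L^{p(\cdot)}(E)\to L^{q(\cdot)}(E)$ boundedness of $I_{\Omega,\alpha(\cdot)}$ already recorded in Theorem \ref{teo1}, exactly in the manner flagged by the authors in the paragraph immediately preceding the statement. The bridge between the two operators is a pointwise domination
\[
M_{\Omega,\alpha(\cdot)}f(x)\lesssim I_{|\Omega|,\alpha(\cdot)}(|f|)(x),\qquad x\in E,
\]
after which the pointwise monotonicity property of the variable-exponent norm yields (\ref{6}) with essentially no further work.

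First I would derive the pointwise inequality above. Fix $x\in E$ and $r>0$. On $\tilde{B}(x,r)$ one has $|x-y|\leq r$ and $n-\alpha(x)>0$ by (\ref{25}), so inserting the factor $|x-y|^{n-\alpha(x)}/|x-y|^{n-\alpha(x)}=1$ inside the integral defining the supremand produces
\[
|B(x,r)|^{\frac{\alpha(x)}{n}-1}\int_{\tilde{B}(x,r)}|\Omega(x-y)||f(y)|\,dy\leq v_n^{\frac{\alpha(x)}{n}-1}\int_{\tilde{B}(x,r)}\frac{|\Omega(x-y)|}{|x-y|^{n-\alpha(x)}}|f(y)|\,dy,
\]
where I have used $|B(x,r)|=v_nr^n$ to cancel the $r^{n-\alpha(x)}$ factor that comes out of the estimate $|x-y|^{n-\alpha(x)}\leq r^{n-\alpha(x)}$. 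Since $0<\alpha_0\leq\alpha(x)<n$ on $E$, the prefactor $v_n^{\alpha(x)/n-1}$ is bounded uniformly in $x$, and since the integrand is non-negative the domain of integration may be enlarged from $\tilde{B}(x,r)$ to $E$. Taking the supremum in $r>0$ then gives the claimed pointwise bound.

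Next I would apply Theorem \ref{teo1} with the kernel $|\Omega|\in L_s(S^{n-1})$ (which inherits homogeneity of degree zero and has the same $L_s$ norm as $\Omega$) and the datum $|f|\in L^{p(\cdot)}(E)$, obtaining $\|I_{|\Omega|,\alpha(\cdot)}(|f|)\|_{L^{q(\cdot)}(E)}\lesssim\|f\|_{L^{p(\cdot)}(E)}$. Combining this with the pointwise inequality just proved and the monotonicity of $\|\cdot\|_{L^{p(\cdot)}}$ recalled immediately before the statement yields precisely (\ref{6}).

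The only delicate point---and what I expect to be the main obstacle---is the switch from $\Omega$ to $|\Omega|$ when invoking Theorem \ref{teo1}: the mean-value-zero property on $S^{n-1}$ assumed globally in the Notation section is destroyed by taking absolute values. However, that cancellation plays no role in the proof of the Sobolev-type estimate (\ref{5}) (unlike in the singular-integral Corollary \ref{Corollary0}), because the kernel $|\Omega(x-y)|/|x-y|^{n-\alpha(x)}$ is absolutely integrable away from the diagonal thanks to $\alpha_0>0$. Once this is acknowledged, Theorem \ref{teo1} is genuinely available for $|\Omega|$ and the argument closes with no further calculation.
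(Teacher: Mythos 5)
Your argument is essentially the paper's own: a pointwise domination $M_{\Omega,\alpha(\cdot)}f \lesssim I_{|\Omega|,\alpha(\cdot)}(|f|)$ followed by the $L^{p(\cdot)}\to L^{q(\cdot)}$ boundedness of the fractional integral with kernel $|\Omega|$ and the monotonicity of the variable-exponent norm (this is precisely the content of Lemma \ref{lemma100} and the remark after it). Your direct derivation of the pointwise bound---using $|x-y|<r$ and $|B(x,r)|=v_nr^n$ to absorb the normalizing factor---is cleaner than the paper's near-extremal-$r_x$ argument, and your explicit observation that the vanishing-mean condition on $\Omega$ is not needed for the Sobolev estimate when $\alpha_0>0$ is a useful clarification that the paper leaves implicit.
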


\begin{corollary}
Let $E$ be a bounded open set, $\Omega \in L_{s}(S^{n-1})$ with $1<s\leq
\infty $ be homogeneous function of degree $0$ on ${\mathbb{R}^{n}}$, $\frac{%
p}{s^{\prime }}\in \mathcal{B}\left( E\right) $ and $\left( p^{\prime
}\right) _{+}\leq s$. Under the conditions of Theorem \ref{teo3} (taking $%
\alpha \left( \cdot \right) =0$ there), the operator $M_{\Omega }$ is $%
\left( L^{p\left( \cdot \right) }\left( E\right) \rightarrow L^{p\left(
\cdot \right) }\left( E\right) \right) $-bounded, that is,%
\begin{equation}
\left\Vert M_{\Omega }f\right\Vert _{L^{p\left( \cdot \right) }\left(
E\right) }\lesssim \left\Vert f\right\Vert _{L^{p\left( \cdot \right)
}\left( E\right) }  \label{6*}
\end{equation}%
is valid.
\end{corollary}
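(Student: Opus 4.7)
The plan is to reduce the corollary to the $L^{p(\cdot)/s'}(E)$-boundedness of the classical Hardy-Littlewood maximal operator $M$, which is precisely what the hypothesis $p/s'\in\mathcal{B}(E)$ provides. Note that we cannot literally invoke Theorem \ref{teo3} by setting $\alpha(\cdot)=0$, since the assumption (\ref{25}) requires $\alpha_0>0$; so a separate (but short) argument is needed. The key ingredient will be a pointwise domination of $M_{\Omega}f(x)$ by a power of the classical maximal function of a power of $|f|$, obtained by applying H\"older's inequality inside the defining average.

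First I would apply H\"older's inequality with exponents $s$ and $s'$ to the integrand of $M_{\Omega}f(x)$: for each $r>0$,
\begin{equation*}
\frac{1}{|B(x,r)|}\int_{\tilde{B}(x,r)}|\Omega(x-y)||f(y)|\,dy
\leq \frac{1}{|B(x,r)|}\,\|\Omega(x-\cdot)\|_{L_{s}(\tilde{B}(x,r))}\,\biggl(\int_{\tilde{B}(x,r)}|f(y)|^{s'}dy\biggr)^{1/s'}.
\end{equation*}
By the estimate (\ref{0*}) we have $\|\Omega(x-\cdot)\|_{L_{s}(\tilde{B}(x,r))}\lesssim\|\Omega\|_{L_{s}(S^{n-1})}\,r^{n/s}$, and since $|B(x,r)|\approx r^{n}$, the two powers of $r$ combine to give $r^{-n/s'}\approx |B(x,r)|^{-1/s'}$. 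Taking the supremum over $r>0$ yields the pointwise bound
\begin{equation*}
M_{\Omega}f(x)\;\lesssim\;\|\Omega\|_{L_{s}(S^{n-1})}\,\bigl(M(|f|^{s'})(x)\bigr)^{1/s'}.
\end{equation*}

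Next, I would pass to the variable exponent Lebesgue norm using the homogeneity property $\|g^{1/s'}\|_{L^{p(\cdot)}(E)}=\|g\|_{L^{p(\cdot)/s'}(E)}^{1/s'}$, which applies because the hypothesis $(p')_{+}\leq s$ forces $p(\cdot)/s'\geq 1$, so $p(\cdot)/s'\in \Phi(E)$. Combining with the pointwise bound just obtained,
\begin{equation*}
\|M_{\Omega}f\|_{L^{p(\cdot)}(E)}\;\lesssim\;\bigl\|\bigl(M(|f|^{s'})\bigr)^{1/s'}\bigr\|_{L^{p(\cdot)}(E)}=\bigl\|M(|f|^{s'})\bigr\|_{L^{p(\cdot)/s'}(E)}^{1/s'}.
\end{equation*}
Since by hypothesis $p/s'\in\mathcal{B}(E)$, the maximal operator $M$ is bounded on $L^{p(\cdot)/s'}(E)$, so the last expression is dominated by $\bigl\||f|^{s'}\bigr\|_{L^{p(\cdot)/s'}(E)}^{1/s'}=\|f\|_{L^{p(\cdot)}(E)}$, which is exactly (\ref{6*}).

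The only mildly delicate point will be verifying that the rescaled exponent $p(\cdot)/s'$ really sits in $\Phi(E)$ so that the Luxemburg norm identity for powers applies, and keeping track of the constant $\|\Omega\|_{L_{s}(S^{n-1})}$, which is finite by assumption. The step I expect to be the main obstacle, modest though it is, is the pointwise Hölder estimate together with the correct accounting of the factors of $r$ contributed by (\ref{0*}) and by $|B(x,r)|^{-1}$; once this pointwise bound is in place, the rest is a direct application of the hypothesis $p/s'\in\mathcal{B}(E)$.
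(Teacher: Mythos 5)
Your proof is correct, and it is in fact the \emph{only} rigorous route available: the paper presents the corollary as a specialization ``taking $\alpha(\cdot)=0$'' of Theorem \ref{teo3}, but this does not literally make sense, both because the standing assumption (\ref{25}) there requires $\alpha_0>0$ and because the comparison operator $\widetilde{T}_{|\Omega|,\alpha(\cdot)}$ from Lemma \ref{lemma100} becomes, at $\alpha=0$, the non-cancellative convolution with the non-integrable kernel $|\Omega(x-y)|/|x-y|^n$, which is not bounded on $L^{p(\cdot)}(E)$. The paper's remark after Lemma \ref{lemma100} glosses over this; your argument fills the gap. Your pointwise estimate $M_\Omega f(x)\lesssim \|\Omega\|_{L_s(S^{n-1})}\,\bigl(M(|f|^{s'})(x)\bigr)^{1/s'}$ is exactly right (the factor $r^{-n/s'}$ from $|B(x,r)|^{-1}\cdot r^{n/s}$ cancels $|B(x,r)|^{1/s'}$ coming from converting the integral of $|f|^{s'}$ into an average), and it explains precisely why the hypothesis $p/s'\in\mathcal{B}(E)$ is imposed: it is the assumption tailored to make this transfer work. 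The rescaling identity $\|g^{1/s'}\|_{L^{p(\cdot)}}=\|g\|_{L^{p(\cdot)/s'}}^{1/s'}$ is legitimate here since $(p')_+\le s$ is equivalent to $p_-\ge s'$, which is the condition $1/s'\ge 1/p_-$ required in the paper's statement of the homogeneity property. So in sum: correct, and a cleaner, self-contained argument than the deduction the paper suggests.
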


We are now in a place of proving (\ref{6}) in Theorem \ref{teo3}.

\begin{remark}
The conclusion of (\ref{6}) is a direct consequence of the following Lemma %
\ref{lemma100} and (\ref{5}). In order to do this, we need to define an
operator by%
\begin{equation*}
\widetilde{T}_{\left \vert \Omega \right \vert ,\alpha \left( \cdot \right)
}\left( \left \vert f\right \vert \right) (x)=\int \limits_{E}\frac{\left
\vert \Omega (x-y)\right \vert }{|x-y|^{n-\alpha \left( x\right) }}\left
\vert f(y)\right \vert dy\qquad 0<\alpha \left( x\right) <n,
\end{equation*}%
where $\Omega \in L_{s}(S^{n-1})\left( s>1\right) $ is homogeneous of degree
zero on ${\mathbb{R}^{n}}$.
\end{remark}

Using the idea of proving Corollary 3.1. in \cite{Wu1}, we can obtain the
following pointwise relation:

\begin{lemma}
\label{lemma100}Let $0<\alpha \left( x\right) <n$ and $\Omega \in
L_{s}(S^{n-1})\left( s>1\right) $. Then we have%
\begin{equation}
M_{\Omega ,\alpha \left( \cdot \right) }\left( f\right) (x)\leq C\widetilde{T%
}_{\left \vert \Omega \right \vert ,\alpha \left( \cdot \right) }\left(
\left \vert f\right \vert \right) (x)\qquad \text{for }x\in {\mathbb{R}^{n}},
\label{39}
\end{equation}%
where $C$ does not depend on $f$ and $x$.
\end{lemma}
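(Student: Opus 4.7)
The plan is to prove the pointwise inequality by a direct comparison of integrands, exploiting the fact that on $\tilde{B}(x,r)$ the distance $|x-y|$ is bounded above by $r$, which allows us to convert the ball-average in $M_{\Omega,\alpha(\cdot)}$ into a Riesz-type kernel of the form appearing in $\widetilde{T}_{|\Omega|,\alpha(\cdot)}$.

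First I would rewrite the normalizing factor in the definition of $M_{\Omega,\alpha(\cdot)}$. Since $|B(x,r)| = v_n r^n$, we have
\begin{equation*}
|B(x,r)|^{\frac{\alpha(x)}{n}-1} = v_n^{\frac{\alpha(x)}{n}-1}\, r^{\alpha(x)-n}.
\end{equation*}
Next, for any $y\in \tilde{B}(x,r)$ one has $|x-y|<r$; because $n-\alpha(x)>0$ by hypothesis, this yields
\begin{equation*}
\frac{1}{r^{n-\alpha(x)}} \leq \frac{1}{|x-y|^{n-\alpha(x)}}.
\end{equation*}

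Multiplying by the nonnegative integrand $|\Omega(x-y)||f(y)|$ and integrating over $\tilde{B}(x,r)\subset E$ produces
\begin{equation*}
|B(x,r)|^{\frac{\alpha(x)}{n}-1}\int_{\tilde{B}(x,r)}|\Omega(x-y)||f(y)|\,dy \leq C\int_{\tilde{B}(x,r)}\frac{|\Omega(x-y)|}{|x-y|^{n-\alpha(x)}}|f(y)|\,dy,
\end{equation*}
with $C=v_n^{\frac{\alpha(x)}{n}-1}$. Enlarging the domain of integration from $\tilde{B}(x,r)$ to $E$ only increases the right-hand side (the integrand is nonnegative), so we obtain the $r$-free bound $C\,\widetilde{T}_{|\Omega|,\alpha(\cdot)}(|f|)(x)$. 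Taking the supremum over $r>0$ on the left completes the inequality.

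I do not expect any genuine obstacle; the argument is a one-line estimate once the measure factor is expanded and the monotonicity in $|x-y|$ is applied. The only minor subtleties are (i) verifying that the constant $C$ depends on dimension alone (hence is independent of $f$ and $x$), and (ii) ensuring that the hypotheses $0<\alpha(x)<n$ and $\Omega\in L_s(S^{n-1})$ suffice for all integrals involved to be interpreted in the same way on both sides, which they do since the comparison is purely pointwise and monotone.
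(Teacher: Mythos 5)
Your proof is correct and follows essentially the same pointwise-monotonicity argument as the paper: for $y\in \tilde B(x,r)$ one has $|x-y|<r$, so since $n-\alpha(x)>0$ the kernel $|x-y|^{\alpha(x)-n}$ dominates $r^{\alpha(x)-n}$, and the ball-average in $M_{\Omega,\alpha(\cdot)}$ is absorbed into the Riesz-type integral; the paper phrases this by picking a near-extremal $r_x$ (hence the factor $2$) whereas you bound every $r$ and take the supremum at the end, which is the cleaner of the two equivalent formulations. One small imprecision worth fixing: the constant $v_n^{\alpha(x)/n-1}$ is not independent of $x$ as written, since the exponent involves $\alpha(x)$; but because $0<\alpha(x)/n<1$ it is uniformly bounded by $\max\left(1,v_n^{-1}\right)$, so you may replace it by that dimensional constant and the conclusion stands.
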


\begin{proof}
To prove (\ref{39}), we observe that for any $x\in {\mathbb{R}^{n}}$, there
exists an $r=r_{x}$ such that%
\begin{equation*}
M_{\Omega ,\alpha \left( \cdot \right) }\left( f\right) (x)\leq \frac{2}{%
|B\left( x,r_{x}\right) |^{n-\alpha \left( x\right) }}\int \limits_{B\left(
x,r_{x}\right) }\left \vert \Omega \left( x-y\right) \right \vert |f(y)|dy,
\end{equation*}%
and by the inequality above, we get 
\begin{eqnarray*}
\widetilde{T}_{\left \vert \Omega \right \vert ,\alpha \left( \cdot \right)
}\left( \left \vert f\right \vert \right) (x) &\geq &\dint \limits_{B\left(
x,r_{x}\right) }\frac{\left \vert \Omega (x-y)\right \vert }{|x-y|^{n-\alpha
\left( x\right) }}\left \vert f(y)\right \vert dy \\
&\geq &\frac{C}{|B\left( x,r_{x}\right) |^{n-\alpha \left( x\right) }}\dint
\limits_{B\left( x,r_{x}\right) }\left \vert \Omega (x-y)\right \vert \left
\vert f(y)\right \vert dy.
\end{eqnarray*}
\end{proof}

From the process proving (\ref{5}) in \cite{Rafeiro}, it is easy to see that
the conclusions of (\ref{5}) also hold for $\widetilde{T}_{\left\vert \Omega
\right\vert ,\alpha \left( \cdot \right) }$. Combining this with (\ref{39}),
we can immediately obtain (\ref{6}), which completes the proof of Theorem %
\ref{teo3}.

\begin{remark}
Taking $\alpha \left( \cdot \right) =0$ in Lemma \ref{lemma100} and the
inequality%
\begin{equation*}
M_{\Omega }\left( f\right) (x)\leq C\widetilde{T}_{\left\vert \Omega
\right\vert }\left( \left\vert f\right\vert \right) (x)\qquad \text{for }%
x\in {\mathbb{R}^{n}},
\end{equation*}%
which follows from the definitions of the operators.
\end{remark}

The above theorems (Theorem \ref{teo1} and Theorem \ref{teo3}) allows to use
the known results for the boundedness of the operators $M_{\Omega ,\alpha
\left( \cdot \right) }$ and $I_{\Omega ,\alpha \left( \cdot \right) }$
transfer to the various function spaces. The following fact is known, see
Lemma 3.1. in \cite{Long}.

\begin{lemma}
\label{Lemma1}Let $E$ be a bounded open set, $p\left( x\right) \in \mathcal{P%
}^{\log }\left( E\right) $ and $\alpha \left( x\right) $ satisfy assumptions
(\ref{25}). Then,%
\begin{equation*}
\left\Vert \left\vert x-\cdot \right\vert ^{\alpha \left( x\right) -n}\chi _{%
\tilde{B}(x,r)}\right\Vert _{_{L^{p\left( \cdot \right) }\left( E\right)
}}\lesssim r^{\alpha \left( x\right) -\frac{n}{p\left( x\right) }}.
\end{equation*}
\end{lemma}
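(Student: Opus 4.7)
The plan is to verify the bound directly from the Luxemburg norm definition (\ref{0}) by exhibiting a concrete $\lambda$ for which the associated modular is $\leq 1$. Specifically, I would set
$$\lambda := C_0\, r^{\alpha(x) - n/p(x)}$$
for a sufficiently large constant $C_0 = C_0(n, p_-, p_+, \alpha_0, E, \text{log-H\"older const.})$ and then aim to show
$$\int_{\tilde B(x,r)} \frac{|x-y|^{(\alpha(x)-n)\,p(y)}}{\lambda^{p(y)}}\, dy \leq 1,$$
since this would force $\||x-\cdot|^{\alpha(x)-n}\chi_{\tilde B(x,r)}\|_{L^{p(\cdot)}(E)} \leq \lambda$, which is exactly the claimed estimate.

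The first move is to split on the size of $r$. For $r \geq 1/2$, I would use that $E$ is bounded: the norm is dominated by $\||x-\cdot|^{\alpha(x)-n}\chi_E\|_{L^{p(\cdot)}(E)}$, a quantity uniformly bounded in $x$, while the right-hand side $r^{\alpha(x)-n/p(x)}$ is bounded below by a positive constant (note $\alpha(x) - n/p(x) < 0$ from (\ref{25})), so the inequality is trivial. The substantive case is therefore $r \leq 1/2$.

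For $r \leq 1/2$, the key device is the local log-H\"older continuity (\ref{1}). For every $y \in B(x,r)\cap E$, this gives $|x-y|^{p(y)-p(x)} \leq e^C$ and (since $r \leq 1/2$) also $r^{p(y)-p(x)} \leq e^C$. Using these pointwise bounds I would freeze the exponent, replacing $p(y)$ by $p(x)$ in both $|x-y|^{(\alpha(x)-n)\,p(y)}$ and $\lambda^{p(y)}$, absorbing the resulting oscillations into an absolute constant. After this step the modular is controlled by
$$\frac{C}{C_0^{\,p_-}} \int_{\tilde B(x,r)} |x-y|^{(\alpha(x)-n)\,p(x)}\, r^{-(\alpha(x)-n/p(x))\,p(x)}\,dy,$$
an integral with a \emph{constant} exponent $p(x)$ which I would evaluate by passing to polar coordinates (using $\tilde B(x,r)\subset B(x,r)$) and carrying out the elementary radial integral; the powers of $r$ then cancel and only a numerical factor (depending on $p(x), \alpha(x), n$) remains, which can be absorbed by taking $C_0$ large.

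The main obstacle is the log-H\"older reduction of Step 2: one must make sure the pointwise bound on $|x-y|^{p(y)-p(x)}$ is uniform in $y\in\tilde B(x,r)$, with a constant independent of $x$ and $r$, so that the resulting multiplicative error does not spoil the final uniform bound. A secondary subtlety is the integrability of $\rho^{(\alpha(x)-n)p(x)+n-1}$ on $(0,r)$; this is governed by the structural assumption $\alpha(x)p(x)<n$ in (\ref{25}), which must be invoked carefully together with $\alpha_0 > 0$ to make all constants depend only on allowed parameters. Once these two uniformity issues are pinned down, the rest is routine polar integration.
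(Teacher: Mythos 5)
The paper does not contain a proof of Lemma \ref{Lemma1}; it is quoted directly from Lemma 3.1 of \cite{Long}, so there is no in-paper argument to compare against, and your proposal has to stand on its own.

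On its own it has a genuine gap, concealed in the ``routine polar integration'' of the final step. Carry that step through with the exponent frozen at $p(x)$: the radial integral $\int_0^r\rho^{(\alpha(x)-n)p(x)+n-1}\,d\rho$ converges only when $(\alpha(x)-n)p(x)+n>0$, i.e. $\alpha(x)>n/p'(x)$. This is \emph{not} the assumption $\alpha(x)p(x)<n$ of (\ref{25}); that assumption says $\alpha(x)<n/p(x)$, which is a different (for $p(x)\ge 2$, incompatible) inequality, so your attribution of the convergence to (\ref{25}) is wrong. Assuming convergence anyway, the integral equals $\frac{r^{(\alpha(x)-n)p(x)+n}}{(\alpha(x)-n)p(x)+n}$; after dividing by $\lambda^{p(x)}=C_0^{p(x)}r^{\alpha(x)p(x)-n}$ the frozen modular is
\begin{equation*}
\frac{c_n}{\bigl((\alpha(x)-n)p(x)+n\bigr)\,C_0^{p(x)}}\;r^{\,n(2-p(x))},
\end{equation*}
so the powers of $r$ do \emph{not} cancel. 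For $p(x)>2$ this blows up as $r\to0$ and no choice of $C_0$ fixes it; you assert cancellation but should have observed the residual exponent $n(2-p(x))$. The $r\ge 1/2$ branch has the same latent problem: $\||x-\cdot|^{\alpha(x)-n}\chi_E\|_{L^{p(\cdot)}(E)}$ is finite only under $\alpha(x)>n/p'(x)$, which (\ref{25}) does not supply.

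A constant-exponent sanity check explains what is happening: for constant $p,\alpha$ one gets $\||x-\cdot|^{\alpha-n}\chi_{B(x,r)}\|_{L^p}\approx r^{\alpha-n/p'}$ (when finite), not $r^{\alpha-n/p}$, and finiteness requires $\alpha>n/p'$, whereas (\ref{25}) imposes $\alpha<n/p$. The exponent $r^{\alpha-n/p}$ together with the compatibility with $\alpha p<n$ are exactly the signatures of the complementary (far-field) estimate $\||x-\cdot|^{\alpha-n}\chi_{E\setminus B(x,r)}\|_{L^{p'}}\lesssim r^{\alpha-n/p}$. In other words, the statement as reproduced in this paper almost certainly carries a misprint (the conjugate exponent $p'(\cdot)$ in place of $p(\cdot)$, and/or $E\setminus\tilde B(x,r)$ in place of $\tilde B(x,r)$); compare how it is actually used in the far part estimate (\ref{13*}), where the integration domain is the annulus $2r\le|x-y|\le t$. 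Your overall mechanism---exhibiting a $\lambda$ that bounds the modular, freezing the exponent via local log-H\"older, then integrating radially---is precisely the right one and would close for the corrected formulation; but as applied to the statement as written the computation fails, and the proof should have surfaced that rather than asserting a cancellation that does not occur.
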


We will also make use of the estimate provided by the following fact (see 
\cite{Long}).%
\begin{equation}
\left\Vert \chi _{\tilde{B}(x,r)}\right\Vert _{L^{p\left( \cdot \right)
}\left( E\right) }\lesssim r^{\psi _{p}\left( x,r\right) },\qquad x\in E,%
\text{ }p\left( x\right) \in \mathcal{P}^{\log }\left( E\right) ,
\label{100}
\end{equation}%
where%
\begin{equation*}
\psi _{p}\left( x,r\right) =\left\{ 
\begin{array}{c}
\frac{n}{p\left( x\right) },\text{ }r\leq 1 \\ 
\frac{n}{p\left( \infty \right) },\text{ }r>1.%
\end{array}%
\right.
\end{equation*}

\subsection{\textbf{Preliminaries on variable exponent Morrey spaces }$%
L^{p\left( \cdot \right) ,\protect\lambda \left( \cdot \right) }$}

\begin{flushleft}
We define variable exponent Morrey space as follows.
\end{flushleft}

\begin{definition}
Let $E$ be a bounded open set and $\lambda \left( x\right) $ be a measurable
function on $E$ with values in $\left[ 0,n\right] $. Then, the variable
exponent Morrey space $L^{p\left( \cdot \right) ,\lambda \left( \cdot
\right) }\equiv L^{p\left( \cdot \right) ,\lambda \left( \cdot \right)
}\left( E\right) $ is defined by%
\begin{equation*}
L^{p\left( \cdot \right) ,\lambda \left( \cdot \right) }\equiv L^{p\left(
\cdot \right) ,\lambda \left( \cdot \right) }\left( E\right) =\left\{ 
\begin{array}{c}
f\in L_{loc}^{p\left( \cdot \right) }\left( E\right) : \\ 
\Vert f\Vert _{L^{p\left( \cdot \right) ,\lambda \left( \cdot \right)
}}=\sup\limits_{x\in E,r>0}r^{-\frac{\lambda \left( x\right) }{p\left(
x\right) }}\left\Vert f\chi _{\tilde{B}(x,r)}\right\Vert _{L^{p\left( \cdot
\right) }\left( E\right) }<\infty%
\end{array}%
\right\} .
\end{equation*}%
Note that $L^{p\left( \cdot \right) ,0}\left( E\right) =L^{p\left( \cdot
\right) }\left( E\right) $ and $L^{p\left( \cdot \right) ,n}\left( E\right)
=L^{\infty }\left( E\right) $. If $\lambda _{-}>n$, then $L^{p\left( \cdot
\right) ,\lambda \left( \cdot \right) }\left( E\right) =\left\{ 0\right\} $.
\end{definition}

\begin{lemma}
\label{Lemma2}Let $E$ be a bounded open set, $\Omega \in L_{s}(S^{n-1})$
with $1<s<\infty $, $p\left( x\right) ,q\left( x\right) \in \mathcal{P}%
^{\log }\left( E\right) $, $\alpha \left( x\right) $ satisfy the following
assumptions%
\begin{equation}
\left. 
\begin{array}{c}
\alpha _{0}=\limfunc{essinf}\limits_{x\in E}\alpha \left( x\right) >0 \\ 
\limfunc{esssup}\limits_{x\in E}\left[ \lambda \left( x\right) +\alpha
\left( x\right) p\left( x\right) \right] <n%
\end{array}%
\right\}  \label{11}
\end{equation}%
and $\left( p^{\prime }\right) _{+}\leq s$. Define $q\left( x\right) $ by $%
\frac{1}{q(\cdot )}=\frac{1}{p(\cdot )}-\frac{\alpha (\cdot )}{n-\lambda
(\cdot )}$. Then, the rough Riesz-type potential operator $I_{\Omega ,\alpha
\left( \cdot \right) }$ is $\left( L^{p\left( \cdot \right) ,\lambda \left(
\cdot \right) }\left( E\right) \rightarrow L^{q\left( \cdot \right) ,\lambda
\left( \cdot \right) }\left( E\right) \right) $-bounded. Moreover,%
\begin{equation*}
\left\Vert I_{\Omega ,\alpha \left( \cdot \right) }f\right\Vert _{L^{q\left(
\cdot \right) ,\lambda \left( \cdot \right) }\left( E\right) }\lesssim
\left\Vert f\right\Vert _{L^{p\left( \cdot \right) ,\lambda \left( \cdot
\right) }\left( E\right) }.
\end{equation*}
\end{lemma}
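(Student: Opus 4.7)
The plan is to adapt the Adams-Hedberg strategy to the variable-exponent Morrey setting: establish a Hedberg-type pointwise interpolation between the rough fractional maximal function $M_\Omega f$ and the Morrey norm of $f$, and then combine it with the boundedness of $M_\Omega$ on $L^{p(\cdot),\lambda(\cdot)}(E)$. I first fix $x \in E$ and a parameter $r>0$ (to be optimized later) and split the integral defining $I_{\Omega,\alpha(\cdot)}f(x)$ into a \emph{near} piece over $\{|x-y|\leq r\}$ and a \emph{far} piece over $\{|x-y|>r\}$.

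For the near piece, a dyadic-annuli argument on shells $2^{-k-1}r \leq |x-y| < 2^{-k}r$ controls each annular integral by a ball-average of $|\Omega(x-y)||f(y)|$ of radius $2^{-k}r$, which is bounded by $M_\Omega f(x)$; summing the geometric series $\sum (2^{-k}r)^{\alpha(x)}$ yields the near bound $r^{\alpha(x)} M_\Omega f(x)$. For the far piece, I work on dyadic shells $2^k r < |x-y| \leq 2^{k+1}r$ and apply the generalized H\"older inequality in the three-factor form $L^s \cdot L^{p(\cdot)} \cdot L^{u(\cdot)}$, where $\tfrac{1}{u(x)} = \tfrac{1}{p'(x)} - \tfrac{1}{s}$ is well-defined precisely because of the hypothesis $(p')_+ \leq s$. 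The three norms are controlled respectively by (\ref{0*}), by the Morrey restriction $\|f\|_{L^{p(\cdot)}(\tilde B(x,2^{k+1}r))} \leq (2^{k+1}r)^{\lambda(x)/p(x)}\|f\|_{L^{p(\cdot),\lambda(\cdot)}}$, and by (\ref{100}) applied to $\|\chi_{\tilde B(x,2^{k+1}r)}\|_{L^{u(\cdot)}}$. Summing the resulting geometric series — whose convergence is secured by the second hypothesis $\lambda(x)+\alpha(x)p(x)<n$ in (\ref{11}) — yields the far bound $r^{\alpha(x)-(n-\lambda(x))/p(x)}\|f\|_{L^{p(\cdot),\lambda(\cdot)}(E)}$.

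Balancing the two contributions by choosing $r = (\|f\|_{L^{p(\cdot),\lambda(\cdot)}}/M_\Omega f(x))^{p(x)/(n-\lambda(x))}$ and using the defining relation $\tfrac{1}{q(x)}=\tfrac{1}{p(x)}-\tfrac{\alpha(x)}{n-\lambda(x)}$ produces the Hedberg pointwise estimate
\begin{equation*}
|I_{\Omega,\alpha(\cdot)}f(x)| \lesssim \bigl(M_\Omega f(x)\bigr)^{p(x)/q(x)}\,\|f\|_{L^{p(\cdot),\lambda(\cdot)}(E)}^{1-p(x)/q(x)}.
\end{equation*}
Normalizing so that $\|f\|_{L^{p(\cdot),\lambda(\cdot)}(E)}=1$, raising to the power $q(x)$, integrating over $\tilde B(x_0,R)$ and passing to modulars reduces the remaining task to showing $\int_{\tilde B(x_0,R)} (M_\Omega f)^{p(x)} dx \lesssim R^{\lambda(x_0)}$. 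This is the assertion that $M_\Omega$ maps $L^{p(\cdot),\lambda(\cdot)}(E)$ into itself, which I obtain from Theorem \ref{teo3} in its limiting case $\alpha(\cdot)=0$ (giving $M_\Omega : L^{p(\cdot)} \to L^{p(\cdot)}$) applied after decomposing $f = f\chi_{\tilde B(x_0,2R)} + f\chi_{E\setminus \tilde B(x_0,2R)}$ and treating the tail by direct estimates on dyadic shells as above.

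The main obstacle I expect is the variable-exponent book-keeping in the final step: because $p(x)/q(x)$ depends on $x$, the exponent of $\|f\|_{L^{p(\cdot),\lambda(\cdot)}}$ in the Hedberg bound cannot simply be factored out of an integral, so one must either normalize to unit norm and work throughout via the modular definition (\ref{0}) of the Luxemburg norm, or organize the argument so that only constant powers appear. A secondary technical point is ensuring uniformity in $x$ of the three-factor generalized H\"older inequality with the $x$-dependent auxiliary exponent $u(\cdot)$; this is exactly where the hypothesis $(p')_+ \leq s$ is decisive.
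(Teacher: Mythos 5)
Your proof is correct and follows essentially the same Hedberg-type argument as the paper: split $I_{\Omega,\alpha(\cdot)}f$ into a near piece controlled by $r^{\alpha(x)}M_\Omega f(x)$ via dyadic annuli, a far piece controlled by $r^{\alpha(x)-(n-\lambda(x))/p(x)}\|f\|_{L^{p(\cdot),\lambda(\cdot)}}$, optimize $r$, normalize $\|f\|_{L^{p(\cdot),\lambda(\cdot)}}=1$ (exactly what the paper does to avoid the $x$-dependent exponent $p(x)/q(x)$), and reduce to the modular estimate $\int_{\tilde B(x_0,R)}(M_\Omega f)^{p(\cdot)}\,dx\lesssim R^{\lambda(x_0)}$. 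The only deviation is cosmetic: where the paper cites the "procedure of Theorem 3" in \cite{Almeida} for the far piece and Theorem 5.1 in \cite{Rafeiro} for the Morrey boundedness of $M_\Omega$, you instead propose self-contained arguments (a three-factor generalized H\"older on dyadic shells with $\tfrac{1}{u(\cdot)}=\tfrac{1}{p'(\cdot)}-\tfrac{1}{s}$, using \eqref{0*}, the Morrey restriction, and \eqref{100}; and a local/tail decomposition feeding into the $L^{p(\cdot)}\to L^{p(\cdot)}$ boundedness of $M_\Omega$ from Theorem \ref{teo3} with $\alpha\equiv 0$), both of which are sound and essentially reproduce what those citations establish.
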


\begin{proof}
By the embedding property in Lemma 7 in \cite{Almeida}, we only need to
prove that the operator $I_{\Omega ,\alpha \left( \cdot \right) }$ is
bounded in $L^{p\left( \cdot \right) ,\lambda \left( \cdot \right) }\left(
E\right) $.

\textbf{Hedberg's trick:}

\begin{eqnarray}
I_{\Omega ,\alpha \left( \cdot \right) }f\left( x\right) 
&=&\int\limits_{B(x,2r)}\frac{\Omega (x-y)}{|x-y|^{n-\alpha \left( x\right) }%
}f(y)dy+\int\limits_{B^{C}(x,2r)}\frac{\Omega (x-y)}{|x-y|^{n-\alpha \left(
x\right) }}f(y)dy  \notag \\
&=&\mathcal{F}\left( x,r\right) +\mathcal{G}\left( x,r\right) .  \label{4.6}
\end{eqnarray}%
We may assume that $\left\Vert f\right\Vert _{L^{p\left( \cdot \right)
,\lambda \left( \cdot \right) }\left( E\right) }\leq 1$. For $\mathcal{F}%
\left( x,r\right) $, we first have to prove the following:%
\begin{equation}
\mathcal{F}\left( x,r\right) :=\left\vert \dint\limits_{\left\vert
x-y\right\vert <r}\frac{\Omega (x-y)}{|x-y|^{n-\alpha \left( x\right) }}%
f(y)dy\right\vert \lesssim \frac{2^{n}r^{\alpha \left( x\right) }}{2^{\alpha
\left( x\right) }-1}M_{\Omega }f\left( x\right) .  \label{4.2}
\end{equation}%
Indeed, for $f\left( x\right) \geq 0$ we have%
\begin{eqnarray*}
\mathcal{F}\left( x,r\right)  &=&\dsum\limits_{j=0}^{\infty
}\dint\limits_{2^{-j-1}r\leq |x-y|<2^{-j}r}\frac{\Omega (x-y)}{%
|x-y|^{n-\alpha \left( x\right) }}f(y)dy \\
&\leq &\dsum\limits_{j=0}^{\infty }\frac{1}{\left( 2^{-j-1}r\right)
^{n-\alpha \left( x\right) }}\dint\limits_{|x-y|<2^{-j}r}\Omega (x-y)f(y)dy
\\
&\leq &2^{n-\alpha \left( x\right) }M_{\Omega }f\left( x\right)
\dsum\limits_{j=0}^{\infty }\frac{\left\vert B\left( x,2^{-j}r\right)
\right\vert }{\left( 2^{-j}r\right) ^{n-\alpha \left( x\right) }}.
\end{eqnarray*}%
Hence by $\left\vert B\left( x,2^{-j}r\right) \right\vert \lesssim \left(
2^{-j}r\right) ^{n}$, we obtain%
\begin{equation*}
\mathcal{F}\left( x,r\right) \lesssim 2^{n-\alpha \left( x\right) }r^{\alpha
\left( x\right) }M_{\Omega }f\left( x\right) \dsum\limits_{j=0}^{\infty
}\left( 2^{-j\alpha \left( x\right) }\right) ,
\end{equation*}%
which gives the estimate (\ref{4.2}). Then by (\ref{4.2}):%
\begin{equation*}
\left\vert \mathcal{F}\left( x,r\right) \right\vert \lesssim r^{\alpha
\left( x\right) }M_{\Omega }f\left( x\right) .
\end{equation*}%
For $\mathcal{G}\left( x,r\right) $, from Lemma \ref{Lemma1} and the
procedure of Theorem 3 in \cite{Almeida}, we may show that%
\begin{equation*}
\left\vert \mathcal{G}\left( x,r\right) \right\vert \lesssim r^{\alpha
\left( x\right) -\frac{n-\lambda \left( x\right) }{p\left( x\right) }}.
\end{equation*}%
Then, from (\ref{4.6}) we get%
\begin{equation}
I_{\Omega ,\alpha \left( \cdot \right) }f\left( x\right) \lesssim \left[
r^{\alpha \left( x\right) }M_{\Omega }f\left( x\right) +r^{\alpha \left(
x\right) -\frac{n-\lambda \left( x\right) }{p\left( x\right) }}\right] .
\label{12}
\end{equation}%
As usual in Hedberg approach, we choose%
\begin{equation*}
r=\left[ M_{\Omega }f\left( x\right) \right] ^{-\frac{p\left( x\right) }{%
n-\lambda \left( x\right) }}.
\end{equation*}%
Substituting this into the (\ref{12}), we get%
\begin{equation*}
\left\vert I_{\Omega ,\alpha \left( \cdot \right) }f\left( x\right)
\right\vert \lesssim \left( M_{\Omega }f\left( x\right) \right) ^{\frac{%
p\left( x\right) }{q\left( x\right) }},
\end{equation*}%
here we need the (\ref{0*}). Therefore, by Theorem 5.1 in \cite{Rafeiro} we
know that%
\begin{equation*}
\dint\limits_{\tilde{B}\left( x,r\right) }\left\vert I_{\Omega ,\alpha
\left( \cdot \right) }f\left( y\right) \right\vert ^{q\left( y\right)
}dy\lesssim \dint\limits_{\tilde{B}\left( x,r\right) }\left\vert M_{\Omega
}f\left( y\right) \right\vert ^{p\left( y\right) }dy\lesssim r^{\lambda
\left( x\right) },
\end{equation*}%
which completes the proof of Lemma \ref{Lemma2}.
\end{proof}

\begin{theorem}
\label{teo4} Let $E$ be a bounded open set, $\Omega \in L_{s}(S^{n-1})$ with 
$1<s<\infty $, $p\left( x\right) ,q\left( x\right) \in \mathcal{P}^{\log
}\left( E\right) $, $\alpha \left( x\right) $ satisfy (\ref{11}) and $\left(
p^{\prime }\right) _{+}\leq s$. Define $q\left( x\right) $, $\mu \left(
x\right) $ by $\frac{1}{q(\cdot )}=\frac{1}{p(\cdot )}-\frac{\alpha (\cdot )%
}{n}$, $\frac{n-\mu (\cdot )}{q(\cdot )}=\frac{n-\lambda (\cdot )}{p(\cdot )}%
-\alpha (\cdot )$, respectively. Then, the rough Riesz-type potential
operator $I_{\Omega ,\alpha \left( \cdot \right) }$ is $\left( L^{p\left(
\cdot \right) ,\lambda \left( \cdot \right) }\left( E\right) \rightarrow
L^{q\left( \cdot \right) ,\mu \left( \cdot \right) }\left( E\right) \right) $%
-bounded. Moreover,%
\begin{equation}
\left\Vert I_{\Omega ,\alpha \left( \cdot \right) }f\right\Vert _{L^{q\left(
\cdot \right) ,\mu \left( \cdot \right) }\left( E\right) }\lesssim
\left\Vert f\right\Vert _{L^{p\left( \cdot \right) ,\lambda \left( \cdot
\right) }\left( E\right) },  \label{51}
\end{equation}%
where%
\begin{equation*}
1\leq q(\cdot )\leq \frac{p(\cdot )(n-\lambda (\cdot ))}{n-\lambda (\cdot
)-\alpha (\cdot )p(\cdot )}.
\end{equation*}
\end{theorem}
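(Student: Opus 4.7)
The strategy mirrors the Spanne-type approach in classical generalized Morrey spaces, adapted to the variable-exponent setting. My first observation is an algebraic reduction: combining $1/q(\cdot) = 1/p(\cdot) - \alpha(\cdot)/n$ with $(n-\mu(\cdot))/q(\cdot) = (n-\lambda(\cdot))/p(\cdot) - \alpha(\cdot)$ yields the identity $\mu(\cdot)/q(\cdot) = \lambda(\cdot)/p(\cdot)$. Consequently (\ref{51}) is equivalent to the scaling estimate
\[
 r^{-\lambda(x_0)/p(x_0)} \bigl\| I_{\Omega,\alpha(\cdot)} f \cdot \chi_{\tilde{B}(x_0,r)} \bigr\|_{L^{q(\cdot)}(E)} \lesssim \|f\|_{L^{p(\cdot),\lambda(\cdot)}(E)},
\]
uniformly in $x_0 \in E$ and $r>0$, which I would establish via a local-global splitting.

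Fix $x_0 \in E$ and $r>0$, and write $f = f_1 + f_2$ with $f_1 := f \chi_{\tilde{B}(x_0, 2r)}$. The near part $I_{\Omega,\alpha(\cdot)} f_1$ is handled by the full-range Sobolev-type inequality (\ref{5}) of Theorem \ref{teo1}: one has $\|I_{\Omega,\alpha(\cdot)} f_1\|_{L^{q(\cdot)}(E)} \lesssim \|f_1\|_{L^{p(\cdot)}(E)}$, which in turn is $\lesssim r^{\lambda(x_0)/p(x_0)} \|f\|_{L^{p(\cdot),\lambda(\cdot)}(E)}$ directly from the definition of the Morrey norm. This already supplies the correct $r$-power for the local contribution.

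For the far part, I would argue pointwise on $y \in \tilde{B}(x_0,r)$. Since $|y-z| \approx |x_0 - z|$ whenever $|x_0 - z| \geq 2r$, the dyadic decomposition $A_j := \tilde{B}(x_0, 2^{j+1} r) \setminus \tilde{B}(x_0, 2^j r)$ gives
\[
 |I_{\Omega,\alpha(\cdot)} f_2(y)| \lesssim \sum_{j \geq 1} (2^j r)^{\alpha(y) - n} \int_{A_j} |\Omega(y-z)|\,|f(z)|\,dz.
\]
Each annular integral is bounded by the generalized H\"older inequality in $L^{p(\cdot)}(E)$ and $L^{p'(\cdot)}(E)$: the rough kernel is handled using $(p')_+ \leq s$ together with (\ref{0*}), while $\|f \chi_{A_j}\|_{L^{p(\cdot)}(E)}$ contributes $(2^j r)^{\lambda(x_0)/p(x_0)} \|f\|_{L^{p(\cdot),\lambda(\cdot)}(E)}$ via the Morrey definition, and $\|\chi_{A_j}\|_{L^{p'(\cdot)}(E)}$ is controlled via (\ref{100}) and Lemma \ref{Lemma1}. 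The constraint $\alpha(\cdot)p(\cdot) + \lambda(\cdot) < n$ from (\ref{11}) forces convergence of the resulting geometric series and yields the pointwise bound $|I_{\Omega,\alpha(\cdot)} f_2(y)| \lesssim r^{\alpha(y) - (n-\lambda(x_0))/p(x_0)} \|f\|_{L^{p(\cdot),\lambda(\cdot)}(E)}$. Taking the $L^{q(\cdot)}$-norm over $\tilde{B}(x_0,r)$ by means of (\ref{100}) and invoking the Sobolev relation defining $q$ produces precisely the required factor $r^{\mu(x_0)/q(x_0)}$.

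The step I expect to be delicate is the bookkeeping of the variable exponent $\alpha(y)$ across both the dyadic sum and the final norm. Because $\alpha$ depends on the integration variable, convergence of the geometric series must be \emph{uniform} in $y \in \tilde{B}(x_0,r)$; this follows from the uniform upper bound $\alpha(\cdot) p(\cdot) + \lambda(\cdot) < n$ in (\ref{11}) together with the log-H\"older continuity (\ref{1}), which allows $\alpha(y)$ to be replaced by $\alpha(x_0)$ on $\tilde{B}(x_0,r)$ up to harmless multiplicative constants. Passing from the pointwise bound to the final Luxemburg-norm bound is then routine, since the right-hand side has already been expressed as a constant times a power of $r$ times the Morrey norm of $f$.
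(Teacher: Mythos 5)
Your proof is correct, but it takes a genuinely different route from the paper. The paper's proof of Theorem~\ref{teo4} defers to Lemma~\ref{Lemma2}, which is established by Hedberg's trick: the pointwise estimate $|I_{\Omega,\alpha(\cdot)}f(x)|\lesssim \left(M_{\Omega}f(x)\right)^{p(x)/\tilde q(x)}$ with $\tfrac{1}{\tilde q(\cdot)}=\tfrac{1}{p(\cdot)}-\tfrac{\alpha(\cdot)}{n-\lambda(\cdot)}$, combined with the $L^{p(\cdot)}$-boundedness of $M_{\Omega}$. The remaining passage from $L^{\tilde q(\cdot),\lambda(\cdot)}(E)$ to $L^{q(\cdot),\mu(\cdot)}(E)$ is then compressed into a single displayed chain of suprema. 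Your approach is instead a direct Spanne-type local--global splitting on each ball $\tilde B(x_0,r)$: the near part is dispatched by the Sobolev inequality of Theorem~\ref{teo1}, the far part by dyadic annuli, the generalized H\"older inequality with $\tfrac{1}{p}+\tfrac{1}{s}+\tfrac{1}{\nu}=1$, the kernel bound (\ref{0*}), and Lemma~\ref{Lemma1}/(\ref{100}) --- essentially the same machinery the paper develops explicitly for Theorem~\ref{teo7}. Your opening observation $\tfrac{\mu(\cdot)}{q(\cdot)}=\tfrac{\lambda(\cdot)}{p(\cdot)}$ is precisely the algebraic identity that makes both routes land on the same power of $r$, and it is not spelled out in the paper's proof. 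Your route has the advantage of avoiding the Adams/Hedberg machinery and the maximal operator entirely, and it is self-contained; the paper's route is shorter on the page but relies on an implicit Morrey-space embedding, and in fact the motivating inequality displayed at the start of its proof, $\frac{p(n-\lambda)}{n-\lambda-\alpha p}<\frac{np}{n-\alpha p}$, has the wrong direction (the left side is $\geq$ the right whenever $\lambda\alpha p\geq 0$, with equality only when $\lambda\alpha p=0$), so your self-contained argument is on firmer ground. One small point of care in your write-up: the exponent $\alpha(y)$ appearing in the dyadic sum must be frozen to $\alpha(x_0)$ before taking the $L^{q(\cdot)}$-norm, which is legitimate because $\alpha(\cdot)=n\left(\tfrac{1}{p(\cdot)}-\tfrac{1}{q(\cdot)}\right)$ inherits log-H\"older continuity from $p,q\in\mathcal{P}^{\log}(E)$; you flag this, and it does work out.
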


\begin{proof}
Since%
\begin{equation*}
\frac{p(\cdot )(n-\lambda (\cdot ))}{n-\lambda (\cdot )-\alpha (\cdot
)p(\cdot )}<\frac{np(\cdot )}{n-\alpha (\cdot )p(\cdot )},
\end{equation*}%
from Lemma \ref{Lemma2} and Theorem \ref{teo1} we obtain%
\begin{eqnarray*}
\left\Vert I_{\Omega ,\alpha \left( \cdot \right) }f\right\Vert _{L^{q\left(
\cdot \right) ,\mu \left( \cdot \right) }\left( E\right) }
&=&\sup\limits_{x\in E,r>0}r^{-\frac{\mu \left( x\right) }{q\left( x\right) }%
}\left\Vert I_{\Omega ,\alpha \left( \cdot \right) }f\chi _{\tilde{B}%
(x,r)}\right\Vert _{L^{q\left( \cdot \right) }\left( E\right) } \\
&\lesssim &\sup\limits_{x\in E,r>0}r^{-\frac{\lambda \left( x\right) }{%
p\left( x\right) }}\left\Vert f\chi _{\tilde{B}(x,r)}\right\Vert
_{L^{p\left( \cdot \right) }\left( E\right) } \\
&=&\left\Vert f\right\Vert _{L^{p\left( \cdot \right) ,\lambda \left( \cdot
\right) }\left( E\right) }.
\end{eqnarray*}%
Clearly, Theorem \ref{teo4} holds.
\end{proof}

\begin{theorem}
Under the conditions of Theorem \ref{teo4},%
\begin{equation}
\left\Vert M_{\Omega ,\alpha \left( \cdot \right) }f\right\Vert _{L^{q\left(
\cdot \right) ,\mu \left( \cdot \right) }\left( E\right) }\lesssim
\left\Vert f\right\Vert _{L^{p\left( \cdot \right) ,\lambda \left( \cdot
\right) }\left( E\right) }.  \label{52}
\end{equation}
\end{theorem}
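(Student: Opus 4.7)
The plan is to reduce the claim to Theorem \ref{teo4} via the pointwise dominance already established in Lemma \ref{lemma100}. By Lemma \ref{lemma100} we have
\begin{equation*}
M_{\Omega ,\alpha \left( \cdot \right) }f(x)\leq C\,\widetilde{T}_{\left\vert \Omega \right\vert ,\alpha \left( \cdot \right) }\left( \left\vert f\right\vert \right) (x),\qquad x\in E.
\end{equation*}
Since the variable exponent Lebesgue norm $\Vert \cdot \Vert _{L^{q\left( \cdot \right) }\left( E\right) }$ is monotone with respect to pointwise dominance, applying it to $M_{\Omega ,\alpha \left( \cdot \right) }f\cdot \chi _{\tilde{B}(x,r)}$ and then taking $\sup _{x\in E,\,r>0}r^{-\mu (x)/q(x)}$ transfers the pointwise bound into a Morrey norm bound:
\begin{equation*}
\left\Vert M_{\Omega ,\alpha \left( \cdot \right) }f\right\Vert _{L^{q\left( \cdot \right) ,\mu \left( \cdot \right) }\left( E\right) }\lesssim \left\Vert \widetilde{T}_{\left\vert \Omega \right\vert ,\alpha \left( \cdot \right) }\left( \left\vert f\right\vert \right) \right\Vert _{L^{q\left( \cdot \right) ,\mu \left( \cdot \right) }\left( E\right) }.
\end{equation*}

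Next, I would observe that $\widetilde{T}_{\left\vert \Omega \right\vert ,\alpha \left( \cdot \right) }$ has exactly the same structure as $I_{\Omega ,\alpha \left( \cdot \right) }$, with $\left\vert \Omega \right\vert $ replacing $\Omega $. The proof of Theorem \ref{teo4} (via Lemma \ref{Lemma2}, Hedberg's trick, and the estimate (\ref{0*})) goes through absolute values of the kernel at every step, so the same argument produces
\begin{equation*}
\left\Vert \widetilde{T}_{\left\vert \Omega \right\vert ,\alpha \left( \cdot \right) }\left( \left\vert f\right\vert \right) \right\Vert _{L^{q\left( \cdot \right) ,\mu \left( \cdot \right) }\left( E\right) }\lesssim \left\Vert \left\vert f\right\vert \right\Vert _{L^{p\left( \cdot \right) ,\lambda \left( \cdot \right) }\left( E\right) }=\left\Vert f\right\Vert _{L^{p\left( \cdot \right) ,\lambda \left( \cdot \right) }\left( E\right) },
\end{equation*}
under the hypotheses of Theorem \ref{teo4} (namely $p(\cdot ),q(\cdot )\in \mathcal{P}^{\log }(E)$, (\ref{11}), $(p^{\prime })_{+}\leq s$, and the definitions of $q(\cdot ),\mu (\cdot )$). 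Chaining these two displays yields (\ref{52}).

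The only point that requires any care is ensuring that the conclusion of Theorem \ref{teo4} is legitimately available for $\widetilde{T}_{\left\vert \Omega \right\vert ,\alpha \left( \cdot \right) }$ in place of $I_{\Omega ,\alpha \left( \cdot \right) }$; this is the step I would flag as the main (minor) obstacle. One could handle it either by rerunning the proof of Theorem \ref{teo4} verbatim with $\left\vert \Omega \right\vert $ in place of $\Omega $, which works because Hedberg's splitting (\ref{4.6}) and the bounds of $\mathcal{F}(x,r)$ and $\mathcal{G}(x,r)$ only ever use $\left\vert \Omega (x-y)\right\vert $, or by noting (as the authors themselves remark right after Lemma \ref{lemma100}) that the argument producing (\ref{5}) extends to $\widetilde{T}$. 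With that verification in place the proof of (\ref{52}) is immediate from the two displayed inequalities above.
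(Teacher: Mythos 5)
Your proposal is correct and follows essentially the same route as the paper: the authors likewise combine the pointwise domination $M_{\Omega ,\alpha \left( \cdot \right) }f\lesssim \widetilde{T}_{\left\vert \Omega \right\vert ,\alpha \left( \cdot \right) }\left( \left\vert f\right\vert \right) $ from Lemma \ref{lemma100} (i.e.\ (\ref{39})) with the observation that the proof of (\ref{51}) in Theorem \ref{teo4} carries over verbatim to $\widetilde{T}_{\left\vert \Omega \right\vert ,\alpha \left( \cdot \right) }$, which is exactly the step you flagged and justified.
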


\begin{proof}
Similar to the proof of Theorem \ref{teo3}, the conclusion (\ref{52}) is a
direct consequence of (\ref{39}) and (\ref{51}). Indeed, from the process
proving (\ref{51}) in Theorem \ref{teo4}, it is easy to see that the
conclusion (\ref{51}) also holds for $\widetilde{T}_{\left\vert \Omega
\right\vert ,\alpha \left( \cdot \right) }$. Combining this with (\ref{39}),
we can immediately obtain (\ref{52}), which completes the proof.
\end{proof}

\subsection{Preliminaries on \textbf{variable exponent} vanishing
generalized \textbf{Morrey spaces}}

\begin{flushleft}
In this section we first consider the generalized Morrey spaces $L^{p\left(
\cdot \right) ,w\left( \cdot \right) }\left( E\right) $ with variable
exponent $p(x)$ and a general function $w(x,r):\Pi \times \left(
0,diam\left( E\right) \right) \rightarrow 
\mathbb{R}
_{+}$, $\Pi \subset E\subset 
\mathbb{R}
^{n}$, defining the Morrey type norm on sets $E\subset 
\mathbb{R}
^{n}$ which may be both bounded and unbounded; see the definition of the
spaces $L^{p\left( \cdot \right) ,w\left( \cdot \right) }\left( E\right) $
in (\ref{44}) below.
\end{flushleft}

Everywhere in the sequel the functions $w\left( x,r\right) $, $w_{1}\left(
x,r\right) $, $w_{2}\left( x,r\right) $ used in the body of this paper, are
non-negative measurable functions on $E\times \left( 0,\infty \right) $,
where $E\subset 
\mathbb{R}
^{n}$ is an open set. We recall the definition of variable exponent
generalized Morrey space in the following.

\begin{definition}
Let $1\leq p\left( x\right) \leq p_{+}<\infty $, $\Pi \subset E\subset 
\mathbb{R}
^{n}$, $x\in \Pi $, $w(x,r):\Pi \times \left( 0,diam\left( E\right) \right)
\rightarrow 
\mathbb{R}
_{+}$, where%
\begin{equation}
\inf\limits_{x\in \Pi }w(x,r)>0\qquad r>0.  \label{43}
\end{equation}%
Then, the variable exponent generalized Morrey space $L_{\Pi }^{p\left(
\cdot \right) ,w\left( \cdot \right) }\equiv L_{\Pi }^{p\left( \cdot \right)
,w\left( \cdot \right) }\left( E\right) $ is defined by%
\begin{equation}
L_{\Pi }^{p\left( \cdot \right) ,w\left( \cdot \right) }\equiv L_{\Pi
}^{p\left( \cdot \right) ,w\left( \cdot \right) }\left( E\right) =\left\{ 
\begin{array}{c}
f\in L_{loc}^{p\left( \cdot \right) }\left( E\right) : \\ 
\Vert f\Vert _{L_{\Pi }^{p\left( \cdot \right) ,w\left( \cdot \right)
}}=\sup\limits_{x\in \Pi ,r>0}w(x,r)^{-\frac{1}{p\left( x\right) }%
}\left\Vert f\right\Vert _{L^{p\left( \cdot \right) }\left( \tilde{B}%
(x,r)\right) }<\infty%
\end{array}%
\right\} ,  \label{44}
\end{equation}%
and one can also see that for bounded exponents $p$ there holds the
following equivalence:%
\begin{equation*}
f\in L_{\Pi }^{p\left( \cdot \right) ,w\left( \cdot \right) }\text{ if and
only if }\sup_{x\in \Pi ,r>0}\dint\limits_{\tilde{B}(x,r)}\left\vert \frac{%
f\left( y\right) }{w\left( x,r\right) }\right\vert ^{p\left( y\right)
}dy<\infty .
\end{equation*}
\end{definition}

On the other hand, the above definition recover the definition of $%
L^{p\left( \cdot \right) ,\lambda \left( \cdot \right) }\left( E\right) $ if
we choose $w(x,r)=r^{\frac{\lambda \left( x\right) }{p\left( x\right) }}$
and $\Pi =E$, that is 
\begin{equation*}
L^{p\left( \cdot \right) ,\lambda \left( \cdot \right) }\left( E\right)
=L_{\Pi }^{p\left( \cdot \right) ,w\left( \cdot \right) }\left( E\right)
\mid _{w(x,r)=r^{\frac{\lambda \left( x\right) }{p\left( x\right) }}}.
\end{equation*}%
Also, when $\Pi =\left\{ x_{0}\right\} $ and $\Pi =E$, $L_{\Pi }^{p\left(
\cdot \right) ,w\left( \cdot \right) }$ turns into the local generalized
Morrey space $L_{\left\{ x_{0}\right\} }^{p(\cdot ),w\left( \cdot \right)
}(E)$ and the global generalized Morrey space $L_{E}^{p(\cdot ),w\left(
\cdot \right) }(E)$ , respectively. Moreover, we point out that $w(x,r)$ is
a measurable non-negative function and no monotonicity type condition is
imposed on these spaces. Note that by the above definition of the norm in $%
L^{p\left( \cdot \right) }\left( E\right) $ (see \ref{0}), we can also write
that%
\begin{equation*}
\Vert f\Vert _{L_{\Pi }^{p\left( \cdot \right) ,w\left( \cdot \right)
}}=\sup_{x\in \Pi ,r>0}\inf \left\{ \lambda =\lambda \left( x,r\right)
:\dint\limits_{\tilde{B}(x,r)}\left\vert \frac{f\left( y\right) }{\lambda
w\left( x,r\right) }\right\vert ^{p\left( y\right) }dy\leq 1\right\} .
\end{equation*}

Then, recall that the concept of the variable exponent vanishing generalized
Morrey space $VL_{\Pi }^{p\left( \cdot \right) ,w\left( \cdot \right)
}\left( E\right) $ has been introduced in \cite{Long} in the following form:

\begin{definition}
\label{definition}Let $1\leq p\left( x\right) \leq p_{+}<\infty $, $\Pi
\subset E\subset 
\mathbb{R}
^{n}$, $x\in \Pi $, $w(x,r):\Pi \times \left( 0,diam\left( E\right) \right)
\rightarrow 
\mathbb{R}
_{+}$. Then, the variable exponent vanishing generalized Morrey space $%
VL_{\Pi }^{p\left( \cdot \right) ,w\left( \cdot \right) }\equiv VL_{\Pi
}^{p\left( \cdot \right) ,w\left( \cdot \right) }\left( E\right) $ is
defined by%
\begin{equation*}
\left\{ f\in L_{\Pi }^{p\left( \cdot \right) ,w\left( \cdot \right) }\left(
E\right) :\lim\limits_{r\rightarrow 0}\sup\limits_{x\in \Pi }\mathfrak{M}%
_{p\left( \cdot \right) ,w\left( \cdot \right) }\left( f;x,r\right)
=0\right\} ,
\end{equation*}%
where 
\begin{equation*}
\mathfrak{M}_{p\left( \cdot \right) ,w\left( \cdot \right) }\left(
f;x,r\right) :=\frac{r^{-\frac{n}{p\left( x\right) }}\left\Vert f\right\Vert
_{L^{p\left( \cdot \right) }\left( \tilde{B}(x,r)\right) }}{w(x,r)^{\frac{1}{%
p\left( x\right) }}}.
\end{equation*}%
Naturally, it is suitable to impose on $w(x,t)$ with the following
conditions:%
\begin{equation}
\lim_{t\rightarrow 0}\sup\limits_{x\in \Pi }\frac{t^{-\psi _{p}\left(
x,t\right) }}{w(x,t)^{\frac{1}{p\left( x\right) }}}=0  \label{2*}
\end{equation}%
and%
\begin{equation}
\inf_{t>1}\sup\limits_{x\in \Pi }w(x,t)>0.  \label{3*}
\end{equation}
\end{definition}

From (\ref{2*}) and (\ref{3*}), we easily know that the bounded functions
with compact support belong to $VL_{\Pi }^{p\left( \cdot \right) ,w\left(
\cdot \right) }\left( E\right) $, which make the spaces $VL_{\Pi }^{p\left(
\cdot \right) ,w\left( \cdot \right) }\left( E\right) $ non-trivial.

The spaces $VL_{\Pi }^{p\left( \cdot \right) ,w\left( \cdot \right) }\left(
E\right) $ are Banach spaces with respect to the norm 
\begin{equation*}
\Vert f\Vert _{VL_{\Pi }^{p\left( \cdot \right) ,w\left( \cdot \right)
}}\equiv \Vert f\Vert _{L_{\Pi }^{p\left( \cdot \right) ,w\left( \cdot
\right) }}=\sup\limits_{x\in \Pi ,r>0}\mathfrak{M}_{p\left( \cdot \right)
,w\left( \cdot \right) }\left( f;x,r\right) .
\end{equation*}%
The spaces $VL_{\Pi }^{p\left( \cdot \right) ,w\left( \cdot \right) }\left(
E\right) $ are also closed subspaces of the Banach spaces $L_{\Pi }^{p\left(
\cdot \right) ,w\left( \cdot \right) }\left( E\right) $, which may be shown
by standard means.

Furthermore, we have the following embeddings:%
\begin{equation*}
VL_{\Pi }^{p\left( \cdot \right) ,w\left( \cdot \right) }\subset L_{\Pi
}^{p\left( \cdot \right) ,w\left( \cdot \right) },\qquad \Vert f\Vert
_{L_{\Pi }^{p\left( \cdot \right) ,w\left( \cdot \right) }}\leq \Vert f\Vert
_{VL_{\Pi }^{p\left( \cdot \right) ,w\left( \cdot \right) }}.
\end{equation*}

In 2016, for bounded or unbounded sets $E$, Long and Han \cite{Long}
considered the Spanne type boundedness of operators $M_{\alpha \left( \cdot
\right) }$ and $I_{\alpha \left( \cdot \right) }$ on $VL_{\Pi }^{p\left(
\cdot \right) ,w\left( \cdot \right) }\left( E\right) $.

Now, in this section we extend Theorem 4.3. in \cite{Long} to rough kernel
versions. In other words, the Theorem 4.3. in \cite{Long} allows to use the
known results for the boundedness of the operators $I_{\alpha \left( \cdot
\right) }$ and $M_{\alpha \left( \cdot \right) }$ in generalized variable
exponent Morrey spaces to transfer them to the operators $I_{\Omega ,\alpha
\left( \cdot \right) }$ and $M_{\Omega ,\alpha \left( \cdot \right) }$. We
give two versions of such an extension, the one being a generalization of
Spanne's result for rough potential operators with variable order, the other
extending the corresponding Adams' result, respectively.

In this context, we will give some answers to the above explanations as
follows:

\begin{theorem}
\label{teo7}\textbf{(Spanne type result with variable }$\alpha \left(
x\right) $\textbf{) }(our main result) Let $E$ be a bounded open set, $%
\Omega \in L_{s}(S^{n-1})$, $1<s\leq \infty $, $\Omega (\mu x)=\Omega (x)~$%
for any$~\mu >0$, $x\in {\mathbb{R}^{n}}\setminus \{0\}$, $p\left( x\right)
\in \mathcal{P}^{\log }\left( E\right) $, $\alpha \left( x\right) $ satisfy
the assumption (\ref{25}). Define $q\left( x\right) $ by (\ref{26}). Suppose
that $q\left( \cdot \right) $ and $\alpha \left( \cdot \right) $ satisfy (%
\ref{1}). For $\frac{s}{s-1}<p^{-}\leq p\left( \cdot \right) <\frac{n}{%
\alpha \left( \cdot \right) }$, the following pointwise estimate 
\begin{equation}
\left\Vert I_{\Omega ,\alpha \left( \cdot \right) }f\right\Vert _{L^{q\left(
\cdot \right) }\left( \tilde{B}(x,r)\right) }\lesssim r^{\frac{n}{q\left(
x\right) }}\int\limits_{r}^{diam\left( E\right) }\left\Vert f\right\Vert
_{L^{p\left( \cdot \right) }\left( \tilde{B}(x,t)\right) }\frac{dt}{t^{\frac{%
n}{q\left( x\right) }+1}}  \label{40}
\end{equation}%
holds for any ball $\tilde{B}(x,r)$ and for all $f\in L_{loc}^{p\left( \cdot
\right) }\left( E\right) $.

If the functions $w_{1}\left( x,r\right) $ and $w_{2}\left( x,r\right) $
satisfy (\ref{43}) as well as the following Zygmund condition%
\begin{equation}
\int\limits_{r}^{diam\left( E\right) }\frac{w_{1}^{\frac{1}{p\left( x\right) 
}}(x,t)}{t^{1-\alpha \left( x\right) }}dt\lesssim \,w_{2}^{\frac{1}{q\left(
x\right) }}(x,r),\qquad r\in \left( 0,diam\left( E\right) \right]
\label{316}
\end{equation}%
and additionally these functions satisfy the conditions (\ref{2*})-(\ref{3*}%
), 
\begin{equation}
c_{\delta }:=\dint\limits_{\delta }^{diam\left( E\right) }\sup_{x\in \Pi }%
\frac{w_{1}^{\frac{1}{p\left( x\right) }}(x,t)}{t^{1-\alpha \left( x\right) }%
}dt<\infty ,\qquad \delta >0  \label{1*}
\end{equation}%
then the operators $I_{\Omega ,\alpha \left( \cdot \right) }$ and $M_{\Omega
,\alpha \left( \cdot \right) }$ are $\left( VL_{\Pi }^{p\left( \cdot \right)
,w_{1}\left( \cdot \right) }\left( E\right) \rightarrow VL_{\Pi }^{q\left(
\cdot \right) ,w_{2}\left( \cdot \right) }\left( E\right) \right) $-bounded.
Moreover,%
\begin{equation}
\left\Vert I_{\Omega ,\alpha \left( \cdot \right) }f\right\Vert _{VL_{\Pi
}^{q\left( \cdot \right) ,w_{2}\left( \cdot \right) }\left( E\right)
}\lesssim \left\Vert f\right\Vert _{VL_{\Pi }^{p\left( \cdot \right)
,w_{1}\left( \cdot \right) }\left( E\right) },  \label{41}
\end{equation}%
\begin{equation*}
\left\Vert M_{\Omega ,\alpha \left( \cdot \right) }f\right\Vert _{VL_{\Pi
}^{q\left( \cdot \right) ,w_{2}\left( \cdot \right) }\left( E\right)
}\lesssim \left\Vert f\right\Vert _{VL_{\Pi }^{p\left( \cdot \right)
,w_{1}\left( \cdot \right) }\left( E\right) }.
\end{equation*}
\end{theorem}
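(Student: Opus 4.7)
My plan is to prove Theorem \ref{teo7} in three stages: establish the local (pointwise in $r$) estimate \eqref{40}, deduce norm boundedness on $L_{\Pi}^{p(\cdot),w_{1}(\cdot)}(E)$ via the Zygmund condition \eqref{316}, and then upgrade to the vanishing spaces using \eqref{2*}, \eqref{3*} and \eqref{1*}. The argument for $M_{\Omega,\alpha(\cdot)}$ then reduces to that for $I_{\Omega,\alpha(\cdot)}$ via Lemma \ref{lemma100} (with $\widetilde{T}_{|\Omega|,\alpha(\cdot)}$ satisfying the same estimates by the remark following Lemma \ref{lemma100}), so I only need to handle $I_{\Omega,\alpha(\cdot)}$ in detail.

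For the local estimate \eqref{40}, I would perform the standard Spanne-type splitting: fix $x\in E$, $r>0$, and write $f=f_{1}+f_{2}$ with $f_{1}=f\chi_{\tilde{B}(x,2r)}$ and $f_{2}=f\chi_{E\setminus\tilde{B}(x,2r)}$. The near term is handled by Theorem \ref{teo1}: $\|I_{\Omega,\alpha(\cdot)}f_{1}\|_{L^{q(\cdot)}(\tilde{B}(x,r))}\lesssim \|f\|_{L^{p(\cdot)}(\tilde{B}(x,2r))}$, and a standard trick (comparing $\|f\|_{L^{p(\cdot)}(\tilde{B}(x,2r))}$ with $\int_{2r}^{\operatorname{diam}(E)}\|f\|_{L^{p(\cdot)}(\tilde{B}(x,t))}\frac{dt}{t}$ using monotonicity in $t$ and multiplying by $r^{n/q(x)}t^{-n/q(x)-1}$) absorbs it into the right-hand side of \eqref{40}. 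For the far term, at any $y\in\tilde{B}(x,r)$ we have $|y-z|\approx|x-z|$ when $z\notin\tilde{B}(x,2r)$, so using the dyadic decomposition $E\setminus\tilde{B}(x,2r)=\bigcup_{k}\tilde{B}(x,2^{k+1}r)\setminus\tilde{B}(x,2^{k}r)$, applying the generalized Hölder inequality with the triple $(p(\cdot),p'(\cdot),s)$ (the hypothesis $s/(s-1)<p^{-}$, equivalently $(p')_{+}\le s$ after rewriting, is exactly what is needed here), together with \eqref{0*} for the $\Omega$-factor and Lemma \ref{Lemma1} for $\||\cdot-z|^{\alpha(x)-n}\chi\|_{L^{p'(\cdot)}}$, yields $|I_{\Omega,\alpha(\cdot)}f_{2}(y)|\lesssim \int_{2r}^{\operatorname{diam}(E)}\|f\|_{L^{p(\cdot)}(\tilde{B}(x,t))}\frac{dt}{t^{n/q(x)+1}}$. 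Taking $L^{q(\cdot)}$-norm over $\tilde{B}(x,r)$ and invoking \eqref{100} for $\|\chi_{\tilde{B}(x,r)}\|_{L^{q(\cdot)}}\lesssim r^{n/q(x)}$ gives \eqref{40}.

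For the norm estimate on $L_{\Pi}^{p(\cdot),w_{1}(\cdot)}(E)\to L_{\Pi}^{q(\cdot),w_{2}(\cdot)}(E)$, I divide both sides of \eqref{40} by $w_{2}(x,r)^{1/q(x)}$ and by $r^{n/q(x)}$, and bound $\|f\|_{L^{p(\cdot)}(\tilde{B}(x,t))}\le t^{n/p(x)}w_{1}(x,t)^{1/p(x)}\|f\|_{VL_{\Pi}^{p(\cdot),w_{1}(\cdot)}}$ inside the integral. Using \eqref{26} to simplify $\frac{n}{p(x)}-\frac{n}{q(x)}-1=\alpha(x)-1$ and applying \eqref{316} then yields the norm inequality \eqref{41} on $L_{\Pi}^{p(\cdot),w_{1}(\cdot)}$ (and the same for $M_{\Omega,\alpha(\cdot)}$ via Lemma \ref{lemma100}).

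The last stage, the vanishing property transfer, is the step I expect to be most delicate. Given $f\in VL_{\Pi}^{p(\cdot),w_{1}(\cdot)}(E)$, to show $\lim_{r\to 0}\sup_{x\in\Pi}\mathfrak{M}_{q(\cdot),w_{2}(\cdot)}(I_{\Omega,\alpha(\cdot)}f;x,r)=0$ I would split the integral in \eqref{40} at a threshold $\delta>0$ as $\int_{r}^{\delta}+\int_{\delta}^{\operatorname{diam}(E)}$. For $\int_{r}^{\delta}$, I bound $\|f\|_{L^{p(\cdot)}(\tilde{B}(x,t))}\le t^{n/p(x)}w_{1}(x,t)^{1/p(x)}\,\varepsilon(\delta)$ where $\varepsilon(\delta):=\sup_{0<t<\delta}\sup_{x\in\Pi}\mathfrak{M}_{p(\cdot),w_{1}(\cdot)}(f;x,t)\to 0$ as $\delta\to 0$ by assumption, and combine with \eqref{316} to obtain a term controlled by $C\,\varepsilon(\delta)$ (uniform in $x,r$). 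For the tail $\int_{\delta}^{\operatorname{diam}(E)}$, I bound the integrand crudely by $\|f\|_{L_{\Pi}^{p(\cdot),w_{1}(\cdot)}}$ times $\sup_{x\in\Pi}\frac{w_{1}^{1/p(x)}(x,t)}{t^{1-\alpha(x)}}$, which gives a contribution $\le \|f\|\cdot c_{\delta}\cdot \frac{r^{n/q(x)}}{w_{2}(x,r)^{1/q(x)}}$; this tends to $0$ as $r\to 0$ by the vanishing-type hypothesis \eqref{2*} applied to the exponent $q(\cdot)$ and weight $w_{2}$. First sending $r\to 0$ (with $\delta$ fixed) kills the tail, and then sending $\delta\to 0$ kills the main part — this is the standard diagonal argument. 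The main obstacle will be verifying that \eqref{2*} for $(q(\cdot),w_{2})$ is compatible with \eqref{316} and \eqref{1*}, which I expect to follow because \eqref{316} forces $w_{2}^{1/q(x)}(x,r)$ to dominate the integral quantity whose smallness near $0$ controls the $r^{n/q(x)}/w_{2}(x,r)^{1/q(x)}$ ratio; carrying this out cleanly, uniformly in $x\in\Pi$, is the heart of the proof.
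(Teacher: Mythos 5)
Your proposal follows essentially the same route as the paper: split $f=f_{1}+f_{2}$ at scale $2r$, estimate the near term via the $L^{p(\cdot)}\to L^{q(\cdot)}$ boundedness of $I_{\Omega,\alpha(\cdot)}$ (Theorem \ref{teo1}) together with the standard trick of rewriting $\left\Vert f\right\Vert _{L^{p(\cdot)}(\tilde{B}(x,2r))}$ as $r^{n/q(x)}\int_{2r}^{diam(E)}\cdots\frac{dt}{t^{n/q(x)+1}}$, estimate the far term pointwise using generalized H\"{o}lder, Lemma \ref{Lemma1}, (\ref{0*}) and (\ref{100}), then feed (\ref{40}) into (\ref{316}) for the norm inequality. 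Two points of divergence, both acceptable. First, for $f_{2}$ you use a dyadic annular decomposition of $E\setminus\tilde{B}(x,2r)$ whereas the paper uses a Fubini--type trick (introducing an auxiliary exponent $\gamma>n/q(\cdot)$ and writing $|x-y|^{\gamma-n+\alpha(x)}$ as an integral); these are interchangeable and reach the same bound. Note, though, that your H\"{o}lder ``triple $(p(\cdot),p'(\cdot),s)$'' is misnamed: the exponents must satisfy $\frac{1}{p(\cdot)}+\frac{1}{\nu(\cdot)}+\frac{1}{s}=1$ as in the paper, with $\nu(\cdot)$ the third exponent for $|x-\cdot|^{\alpha(x)-n}$; your surrounding remark on $s/(s-1)<p^{-}$ shows you have the right structure, so this is a slip of notation rather than a gap. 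Second, and more substantively in your favour: the paper dispatches the vanishing-property transfer with a single displayed inequality that does not actually follow directly from (\ref{40}), since (\ref{40}) bounds $\mathfrak{M}_{q(\cdot),w_{2}}(I_{\Omega,\alpha(\cdot)}f;x,r)$ by an average of $\mathfrak{M}_{p(\cdot),w_{1}}(f;x,t)$ over $t\geq r$ rather than by its value at $t=r$. Your $\delta$-threshold diagonal argument --- bound $\int_{r}^{\delta}$ by $\varepsilon(\delta)$ times the Zygmund quantity (\ref{316}), bound $\int_{\delta}^{diam(E)}$ by $c_{\delta}\|f\|$ from (\ref{1*}), then send $r\to0$ first and $\delta\to0$ second --- is the correct way to close this step, and is precisely what condition (\ref{1*}) is there for. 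Your claim that the tail term vanishes as $r\to0$ is justified because (\ref{2*}) for the pair $(q(\cdot),w_{2})$ forces $\inf_{x}w_{2}(x,r)^{1/q(x)}\to\infty$ uniformly; the stray factor $r^{n/q(x)}$ in your tail bound looks like a bookkeeping slip (it cancels against the $r^{-n/q(x)}$ in the definition of $\mathfrak{M}$), but the logic survives. Overall the proposal is correct, tracks the paper's structure, and fills in the vanishing step more carefully than the paper itself.
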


\begin{proof}
Since inequality (\ref{40}) is the key of the proof of (\ref{41}), we first
prove (\ref{40}).

For any $x\in E$, we write as 
\begin{equation}
f\left( y\right) =f_{1}\left( y\right) +f_{2}\left( y\right) ,  \label{42}
\end{equation}%
where $f_{1}\left( y\right) =f\left( y\right) \chi _{\tilde{B}\left(
x,2r\right) }\left( y\right) $, $r>0$ such that%
\begin{equation*}
I_{\Omega ,\alpha \left( \cdot \right) }f\left( y\right) =I_{\Omega ,\alpha
\left( \cdot \right) }f_{1}\left( y\right) +I_{\Omega ,\alpha \left( \cdot
\right) }f_{2}\left( y\right) .
\end{equation*}%
By using triangle inequality, we get%
\begin{equation*}
\left\Vert I_{\Omega ,\alpha \left( \cdot \right) }f\right\Vert _{L^{q\left(
\cdot \right) }\left( \tilde{B}\left( x,r\right) \right) }\leq \left\Vert
I_{\Omega ,\alpha \left( \cdot \right) }f_{1}\right\Vert _{L^{q\left( \cdot
\right) }\left( \tilde{B}\left( x,r\right) \right) }+\left\Vert I_{\Omega
,\alpha \left( \cdot \right) }f_{2}\right\Vert _{L^{q\left( \cdot \right)
}\left( \tilde{B}\left( x,r\right) \right) }.
\end{equation*}%
Now, let us estimate $\left\Vert I_{\Omega ,\alpha \left( \cdot \right)
}f_{1}\right\Vert _{L^{q\left( \cdot \right) }\left( \tilde{B}\left(
x,r\right) \right) }$ and $\left\Vert I_{\Omega ,\alpha \left( \cdot \right)
}f_{2}\right\Vert _{L^{q\left( \cdot \right) }\left( \tilde{B}\left(
x,r\right) \right) }$, respectively.

By Hardy-Littlewood-Sobolev type inequality and Theorem \ref{teo1}, we
obtain that%
\begin{eqnarray*}
\left\Vert I_{\Omega ,\alpha \left( \cdot \right) }f_{1}\right\Vert
_{L^{q\left( \cdot \right) }\left( \tilde{B}\left( x,r\right) \right) }
&\leq &\left\Vert I_{\Omega ,\alpha \left( \cdot \right) }f_{1}\right\Vert
_{L^{q\left( \cdot \right) }\left( E\right) }\lesssim \left\Vert
f_{1}\right\Vert _{L^{p\left( \cdot \right) }\left( E\right) }=\left\Vert
f\right\Vert _{L^{p\left( \cdot \right) }\left( \tilde{B}\left( x,2r\right)
\right) } \\
&\approx &r^{\frac{n}{q\left( x\right) }}\left\Vert f\right\Vert
_{L^{p\left( \cdot \right) }\left( \tilde{B}\left( x,2r\right) \right)
}\dint\limits_{2r}^{diam\left( E\right) }\frac{dt}{t^{\frac{n}{q\left(
x\right) }+1}} \\
&\leq &r^{\frac{n}{q\left( x\right) }}\dint\limits_{r}^{diam\left( E\right)
}\left\Vert f\right\Vert _{L^{p\left( \cdot \right) }\left( \tilde{B}\left(
x,t\right) \right) }\frac{dt}{t^{\frac{n}{q\left( x\right) }+1}},
\end{eqnarray*}%
where in the last inequality, we have used the following fact:%
\begin{equation*}
\left\Vert f\right\Vert _{L^{p\left( \cdot \right) }\left( \tilde{B}\left(
x,2r\right) \right) }\leq \left\Vert f\right\Vert _{L^{p\left( \cdot \right)
}\left( \tilde{B}\left( x,t\right) \right) }\text{, for }t>2r.
\end{equation*}

Now, let us estimate the second part. For the estimate used in $\left\Vert
I_{\Omega ,\alpha \left( \cdot \right) }f_{2}\right\Vert _{L^{q\left( \cdot
\right) }\left( \tilde{B}\left( x,r\right) \right) }$, we first have to
prove the below inequality:%
\begin{equation}
\left\vert I_{\Omega ,\alpha \left( \cdot \right) }f_{2}\left( x\right)
\right\vert \lesssim \left\Vert \Omega \right\Vert _{L^{q\left( \cdot
\right) }\left( S^{n-1}\right) }\int\limits_{2r}^{diam\left( E\right)
}\left\Vert f\right\Vert _{L^{p\left( \cdot \right) }\left( \tilde{B}\left(
x,t\right) \right) }\frac{dt}{t^{\frac{n}{q\left( x\right) }+1}}.  \label{10}
\end{equation}%
Indeed, if $\left\vert x-z\right\vert \leq r$ and $\left\vert z-y\right\vert
\geq r$, then $\left\vert x-y\right\vert \leq \left\vert x-z\right\vert
+\left\vert y-z\right\vert \leq 2\left\vert y-z\right\vert $. By generalized
Minkowski's inequality we get%
\begin{eqnarray*}
\left\Vert I_{\Omega ,\alpha \left( \cdot \right) }f_{2}\right\Vert
_{L^{q\left( \cdot \right) }\left( \tilde{B}\left( x,r\right) \right) }
&=&\left\Vert \dint\limits_{E\setminus \tilde{B}\left( x,2r\right) }\frac{%
\Omega (z-y)}{|z-y|^{n-\alpha \left( x\right) }}f(y)dy\right\Vert
_{L^{q\left( \cdot \right) }\left( \tilde{B}\left( x,r\right) \right) } \\
&\lesssim &\dint\limits_{E\setminus \tilde{B}\left( x,2r\right) }\frac{%
\left\vert \Omega (z-y)\right\vert \left\vert f(y)\right\vert }{%
|x-y|^{n-\alpha \left( x\right) }}dy\left\Vert \chi _{\tilde{B}\left(
x,r\right) }\right\Vert _{L^{q\left( \cdot \right) }\left( E\right) }.
\end{eqnarray*}

Put $\gamma >\frac{n}{q\left( \cdot \right) }$. Provided that $1<s^{\prime
}<p^{-}\leq p^{+}<\infty $, $\sup\limits_{x\in E}\left( \alpha \left(
x\right) +\gamma -n\right) <\infty $ and $\inf\limits_{x\in E}\left(
n+\left( \alpha \left( x\right) +\gamma -n\right) \left( \frac{p\left( \cdot
\right) }{s^{\prime }}\right) ^{\prime }\right) <\infty $, by generalized H%
\"{o}lder's inequality for $L^{p\left( \cdot \right) }\left( E\right) $,
Fubini's theorem and Lemma \ref{Lemma1} and (\ref{0*}), we obtain%
\begin{eqnarray}
&&\dint\limits_{E\setminus \tilde{B}\left( x,2r\right) }\frac{\left\vert
\Omega (z-y)\right\vert \left\vert f(y)\right\vert }{|x-y|^{n-\alpha \left(
x\right) }}dy  \notag \\
&\lesssim &\dint\limits_{E\setminus \tilde{B}\left( x,2r\right) }\frac{%
\left\vert \Omega (z-y)\right\vert \left\vert f(y)\right\vert }{%
|x-y|^{n-\alpha \left( x\right) -\gamma }}dy\dint\limits_{\left\vert
x-y\right\vert }^{diam\left( E\right) }\frac{dt}{t^{\gamma +1}}  \notag \\
&=&\dint\limits_{2r}^{diam\left( E\right) }\frac{dt}{t^{\gamma +1}}%
\dint\limits_{\left\{ y\in E:2r\leq |x-y|\leq t\right\} }\frac{\left\vert
\Omega (z-y)\right\vert \left\vert f(y)\right\vert }{|x-y|^{n-\alpha \left(
x\right) -\gamma }}dy  \notag \\
&\lesssim &\dint\limits_{2r}^{diam\left( E\right) }\left\Vert f\right\Vert
_{L^{p\left( \cdot \right) }\left( \tilde{B}\left( x,t\right) \right)
}\left\Vert x-\cdot |^{\alpha \left( x\right) +\gamma -n}\right\Vert
_{L^{\nu \left( \cdot \right) }\left( \tilde{B}\left( x,t\right) \right)
}\left\Vert \Omega \left( z-y\right) \right\Vert _{L_{s}\left( \tilde{B}%
\left( x,t\right) \right) }\frac{dt}{t^{\gamma +1}}  \notag
\end{eqnarray}%
\begin{equation}
\lesssim \dint\limits_{r}^{diam\left( E\right) }\left\Vert f\right\Vert
_{L^{p\left( \cdot \right) }\left( \tilde{B}\left( x,t\right) \right) }\frac{%
dt}{t^{\frac{n}{q\left( x\right) }+1}}  \label{13*}
\end{equation}%
for $\frac{1}{p\left( \cdot \right) }+\frac{1}{s}+\frac{1}{\nu \left( \cdot
\right) }=1$. Thus, by (\ref{100}) we get%
\begin{equation*}
\left\Vert I_{\Omega ,\alpha \left( \cdot \right) }f_{2}\right\Vert
_{L^{q\left( \cdot \right) }\left( \tilde{B}\left( x,r\right) \right)
}\lesssim r^{\frac{n}{q\left( x\right) }}\dint\limits_{r}^{diam\left(
E\right) }\left\Vert f\right\Vert _{L^{p\left( \cdot \right) }\left( \tilde{B%
}\left( x,t\right) \right) }\frac{dt}{t^{\frac{n}{q\left( x\right) }+1}}.
\end{equation*}%
Combining all the estimates for $\left\Vert I_{\Omega ,\alpha \left( \cdot
\right) }f_{1}\right\Vert _{L^{q\left( \cdot \right) }\left( \tilde{B}\left(
x,r\right) \right) }$ and $\left\Vert I_{\Omega ,\alpha \left( \cdot \right)
}f_{2}\right\Vert _{L^{q\left( \cdot \right) }\left( \tilde{B}\left(
x,r\right) \right) }$, we get (\ref{40}).

At last, by Definition \ref{definition}, (\ref{40}) and (\ref{316}) we get%
\begin{eqnarray*}
\left\Vert I_{\Omega ,\alpha \left( \cdot \right) }f\right\Vert _{VL_{\Pi
}^{q\left( \cdot \right) ,w_{2}\left( \cdot \right) }\left( E\right) }
&=&\sup\limits_{x\in \Pi ,r>0}\frac{r^{-\frac{n}{q\left( x\right) }%
}\left\Vert I_{\Omega ,\alpha \left( \cdot \right) }f\right\Vert
_{L^{q\left( \cdot \right) }\left( \tilde{B}(x,r)\right) }}{w_{2}(x,r)^{%
\frac{1}{q\left( x\right) }}} \\
&\lesssim &\sup\limits_{x\in \Pi ,r>0}\frac{1}{w_{2}(x,r)^{\frac{1}{q\left(
x\right) }}}\dint\limits_{r}^{diam\left( E\right) }\left\Vert f\right\Vert
_{L^{p\left( \cdot \right) }\left( \tilde{B}\left( x,t\right) \right) }\frac{%
dt}{t^{\frac{n}{q\left( x\right) }+1}} \\
&\lesssim &\left\Vert f\right\Vert _{VL_{\Pi }^{p\left( \cdot \right)
,w_{1}\left( \cdot \right) }\left( E\right) }\sup\limits_{x\in \Pi ,r>0}%
\frac{1}{w_{2}(x,r)^{\frac{1}{q\left( x\right) }}}\int\limits_{r}^{diam%
\left( E\right) }\frac{w_{1}^{\frac{1}{p\left( x\right) }}(x,t)}{t^{1-\alpha
\left( x\right) }}dt \\
&\lesssim &\left\Vert f\right\Vert _{VL_{\Pi }^{p\left( \cdot \right)
,w_{1}\left( \cdot \right) }\left( E\right) }
\end{eqnarray*}%
and 
\begin{equation*}
\lim_{r\rightarrow 0}\sup\limits_{x\in \Pi }\frac{r^{-\frac{n}{q\left(
x\right) }}\left\Vert I_{\Omega ,\alpha \left( \cdot \right) }f\right\Vert
_{L^{q\left( \cdot \right) }\left( \tilde{B}(x,r)\right) }}{w_{2}(x,t)^{%
\frac{1}{q\left( x\right) }}}\lesssim \lim_{r\rightarrow 0}\sup\limits_{x\in
\Pi }\frac{r^{-\frac{n}{p\left( x\right) }}\left\Vert f\right\Vert
_{L^{p\left( \cdot \right) }\left( \tilde{B}(x,r)\right) }}{w_{1}(x,t)^{%
\frac{1}{p\left( x\right) }}}=0.
\end{equation*}%
Thus, (\ref{41}) holds. On the other hand, since $M_{\Omega ,\alpha \left(
\cdot \right) }\left( f\right) \lesssim I_{\left\vert \Omega \right\vert
,\alpha \left( \cdot \right) }\left( \left\vert f\right\vert \right) $ (see
Lemma \ref{lemma100}) we can also use the same method for $M_{\Omega ,\alpha
\left( \cdot \right) }$, so we omit the details. As a result, we complete
the proof of Theorem \ref{teo7}.
\end{proof}

\begin{definition}
\textbf{(Rough }$\left( p,q\right) $\textbf{-admissible }$T_{\Omega ,\alpha
\left( \cdot \right) }$\textbf{-potential type operator with variable order)}
Let $1\leq p_{-}\left( E\right) \leq p\left( \cdot \right) \leq p_{+}\left(
E\right) <\infty $. A rough sublinear operator with variable order $%
T_{\Omega ,\alpha \left( \cdot \right) }$, i.e. $\left\vert T_{\Omega
,\alpha \left( \cdot \right) }\left( f+g\right) \right\vert \leq \left\vert
T_{\Omega ,\alpha \left( \cdot \right) }\left( f\right) \right\vert
+\left\vert T_{\Omega ,\alpha \left( \cdot \right) }\left( g\right)
\right\vert $ and for $\forall \lambda \in 
\mathbb{C}
$ $\left\vert T_{\Omega ,\alpha \left( \cdot \right) }\left( \lambda
f\right) \right\vert =\left\vert \lambda \right\vert \left\vert T_{\Omega
,\alpha \left( \cdot \right) }\left( f\right) \right\vert $, will be called
rough $\left( p,q\right) $-admissible $T_{\Omega ,\alpha \left( \cdot
\right) }$-potential type operator with variable order if

$\cdot $ $T_{\Omega ,\alpha \left( \cdot \right) }$ fullfills the following
size condition:%
\begin{equation}
\chi _{B\left( z,r\right) }\left( x\right) |T_{\Omega ,\alpha \left( \cdot
\right) }\left( f\chi _{E\diagdown B\left( z,2r\right) }\right) (x)|\leq
C\chi _{B\left( z,r\right) }\left( x\right) \int\limits_{E\diagdown B\left(
z,2r\right) }\frac{|\Omega (x-y)|}{|x-y|^{n-\alpha \left( \cdot \right) }}%
\,|f(y)|\,dy,  \label{e1}
\end{equation}

$\cdot $ $T_{\Omega ,\alpha \left( \cdot \right) }$ is $\left( L^{p\left(
\cdot \right) }\left( E\right) \rightarrow L^{q\left( \cdot \right) }\left(
E\right) \right) $-bounded.
\end{definition}

\begin{remark}
Note that rough $\left( p,q\right) $-admissible potential type operators
were introduced to study their boundedness on Morrey spaces with variable
exponents in \cite{Ho}. The operators $M_{\Omega ,\alpha \left( \cdot
\right) }$ and $I_{\Omega ,\alpha \left( \cdot \right) }$ are also rough $%
\left( p,q\right) $-admissible potential type operators. Moreover, these
operators satisfy (\ref{e1}).
\end{remark}

\begin{corollary}
Obviously, under the conditions of Theorem \ref{teo7}, if the rough $\left(
p,q\right) $-admissible $T_{\Omega ,\alpha \left( \cdot \right) }$-potential
type operator is $\left( L^{p\left( \cdot \right) }\left( E\right)
\rightarrow L^{q\left( \cdot \right) }\left( E\right) \right) $-bounded and
satisfies (\ref{e1}), the result in Theorem \ref{teo7} still holds.
\end{corollary}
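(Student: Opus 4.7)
The plan is to revisit the argument of Theorem \ref{teo7} and observe that it only relies on two features of the operator $I_{\Omega,\alpha(\cdot)}$: the $\bigl(L^{p(\cdot)}(E)\to L^{q(\cdot)}(E)\bigr)$-boundedness from Theorem \ref{teo1}, and the explicit kernel representation (used to bound the ``far'' piece after splitting $f$). Both features are abstracted in the definition of a rough $(p,q)$-admissible $T_{\Omega,\alpha(\cdot)}$-potential type operator, so the same scheme should carry over with only cosmetic changes.

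Concretely, for fixed $x\in E$ and $r>0$, I would use sublinearity to split $f=f_1+f_2$ with $f_1=f\chi_{\tilde B(x,2r)}$ exactly as in (\ref{42}), obtaining
\[
\bigl\|T_{\Omega,\alpha(\cdot)}f\bigr\|_{L^{q(\cdot)}(\tilde B(x,r))}
\le
\bigl\|T_{\Omega,\alpha(\cdot)}f_1\bigr\|_{L^{q(\cdot)}(\tilde B(x,r))}
+\bigl\|T_{\Omega,\alpha(\cdot)}f_2\bigr\|_{L^{q(\cdot)}(\tilde B(x,r))}.
\]
For the local piece, the $\bigl(L^{p(\cdot)}\to L^{q(\cdot)}\bigr)$-boundedness assumed in the definition replaces the use of Theorem \ref{teo1} verbatim, giving
\[
\bigl\|T_{\Omega,\alpha(\cdot)}f_1\bigr\|_{L^{q(\cdot)}(\tilde B(x,r))}
\lesssim \|f\|_{L^{p(\cdot)}(\tilde B(x,2r))}
\lesssim r^{\frac{n}{q(x)}}\int\limits_{r}^{diam(E)}\|f\|_{L^{p(\cdot)}(\tilde B(x,t))}\,\frac{dt}{t^{\frac{n}{q(x)}+1}}.
\]
For the non-local piece, the size condition (\ref{e1}) pointwise dominates $T_{\Omega,\alpha(\cdot)}f_2$ on $\tilde B(x,r)$ by exactly the same kernel integral that arose in (\ref{13*}) in the proof of Theorem \ref{teo7}. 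From that point on, the argument (based on generalized Minkowski, generalized H\"older in $L^{p(\cdot)}$, Fubini, Lemma \ref{Lemma1}, (\ref{0*}) and (\ref{100})) is unchanged and yields the same bound
\[
\bigl\|T_{\Omega,\alpha(\cdot)}f_2\bigr\|_{L^{q(\cdot)}(\tilde B(x,r))}
\lesssim r^{\frac{n}{q(x)}}\int\limits_{r}^{diam(E)}\|f\|_{L^{p(\cdot)}(\tilde B(x,t))}\,\frac{dt}{t^{\frac{n}{q(x)}+1}}.
\]

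Combining the two estimates gives the analogue of (\ref{40}) for $T_{\Omega,\alpha(\cdot)}$. Finally, plugging this pointwise inequality into the $VL_{\Pi}^{q(\cdot),w_{2}(\cdot)}$-norm, invoking the Zygmund condition (\ref{316}) exactly as in the last display of the proof of Theorem \ref{teo7}, and then using (\ref{2*})--(\ref{3*}) and (\ref{1*}) to pass to the vanishing limit $r\to 0$, produces both the norm inequality and the vanishing property required for $T_{\Omega,\alpha(\cdot)}$ to map $VL_{\Pi}^{p(\cdot),w_{1}(\cdot)}(E)$ into $VL_{\Pi}^{q(\cdot),w_{2}(\cdot)}(E)$. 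The only point that needs any attention, and which I regard as the main (minor) obstacle, is confirming that the constants hidden in the estimate of the kernel integral in (\ref{13*}) do not use any structural property of $I_{\Omega,\alpha(\cdot)}$ beyond the dominating kernel on the right-hand side of (\ref{e1}); inspection of that estimate shows this is indeed the case, so the conclusion of Theorem \ref{teo7} transfers verbatim.
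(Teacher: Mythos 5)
Your proposal is correct and is exactly the argument the paper intends: the corollary is stated without a separate proof precisely because the proof of Theorem \ref{teo7} uses only the $\left(L^{p\left(\cdot\right)}\left(E\right)\rightarrow L^{q\left(\cdot\right)}\left(E\right)\right)$-boundedness (for the local piece $f_{1}$) and a kernel domination of the far piece $f_{2}$, which is precisely what the size condition (\ref{e1}) supplies. Your tracing of the splitting (\ref{42}), the reuse of (\ref{13*})--(\ref{100}), and the final passage through (\ref{316}), (\ref{2*})--(\ref{3*}) and (\ref{1*}) matches the intended proof verbatim.
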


For $\alpha \left( x\right) =0$ in Theorem \ref{teo7}, we get the following
new result:

\begin{corollary}
Let $E$, $\Omega $, $p\left( x\right) $ be the same as in Theorem \ref{teo7}%
. Then, for $\frac{s}{s-1}<p^{-}\leq p\left( \cdot \right) \leq p^{+}<\infty 
$, the following pointwise estimate 
\begin{equation*}
\left\Vert T_{\Omega }f\right\Vert _{L^{p\left( \cdot \right) }\left( \tilde{%
B}(x,r)\right) }\lesssim r^{\frac{n}{p\left( x\right) }}\int%
\limits_{r}^{diam\left( E\right) }t^{-\frac{n}{p\left( x\right) }%
-1}\left\Vert f\right\Vert _{L^{p\left( \cdot \right) }\left( \tilde{B}%
(x,t)\right) }dt
\end{equation*}%
holds for any ball $\tilde{B}(x,r)$ and for all $f\in L_{loc}^{p\left( \cdot
\right) }\left( E\right) $.

If the function $w\left( x,r\right) $ satisfies (\ref{43}) as well as the
following Zygmund condition%
\begin{equation*}
\int\limits_{r}^{diam\left( E\right) }\frac{w^{\frac{1}{p\left( x\right) }%
}(x,t)}{t}dt\lesssim \,w^{\frac{1}{p\left( x\right) }}(x,r),\qquad r\in
\left( 0,diam\left( E\right) \right]
\end{equation*}%
and additionally this function satisfies the conditions (\ref{2*})-(\ref{3*}%
), 
\begin{equation*}
c_{\delta }:=\dint\limits_{\delta }^{diam\left( E\right) }\sup_{x\in \Pi }%
\frac{w^{\frac{1}{p\left( x\right) }}(x,t)}{t}dt<\infty ,\qquad \delta >0
\end{equation*}%
then the operators $T_{\Omega }$ and $M_{\Omega }$ are bounded on $VL_{\Pi
}^{p\left( \cdot \right) ,w\left( \cdot \right) }\left( E\right) $. Moreover,%
\begin{equation*}
\left\Vert T_{\Omega }f\right\Vert _{VL_{\Pi }^{p\left( \cdot \right)
,w\left( \cdot \right) }\left( E\right) }\lesssim \left\Vert f\right\Vert
_{VL_{\Pi }^{p\left( \cdot \right) ,w\left( \cdot \right) }\left( E\right) },
\end{equation*}%
\begin{equation}
\left\Vert M_{\Omega }f\right\Vert _{VL_{\Pi }^{p\left( \cdot \right)
,w\left( \cdot \right) }\left( E\right) }\lesssim \left\Vert f\right\Vert
_{VL_{\Pi }^{p\left( \cdot \right) ,w\left( \cdot \right) }\left( E\right) }.
\label{20}
\end{equation}
\end{corollary}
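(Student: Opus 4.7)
The plan is to mimic the argument used for Theorem \ref{teo7} with $\alpha(\cdot)\equiv 0$, replacing the role of Theorem \ref{teo1} by Corollary \ref{Corollary0}. First I would establish the pointwise (local) estimate
\begin{equation*}
\|T_{\Omega}f\|_{L^{p(\cdot)}(\tilde{B}(x,r))}
\lesssim r^{\frac{n}{p(x)}}\int_{r}^{diam(E)}
t^{-\frac{n}{p(x)}-1}\|f\|_{L^{p(\cdot)}(\tilde{B}(x,t))}\,dt.
\end{equation*}
To do this I decompose $f=f_{1}+f_{2}$, with $f_{1}=f\chi_{\tilde{B}(x,2r)}$ and $f_{2}=f\chi_{E\setminus\tilde{B}(x,2r)}$. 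By Corollary \ref{Corollary0}, $T_{\Omega}$ is $\big(L^{p(\cdot)}(E)\rightarrow L^{p(\cdot)}(E)\big)$-bounded, so
$\|T_{\Omega}f_{1}\|_{L^{p(\cdot)}(\tilde{B}(x,r))}\lesssim\|f\|_{L^{p(\cdot)}(\tilde{B}(x,2r))}$, and the routine device $\|f\|_{L^{p(\cdot)}(\tilde{B}(x,2r))}\approx r^{n/p(x)}\|f\|_{L^{p(\cdot)}(\tilde{B}(x,2r))}\int_{2r}^{diam(E)}dt/t^{n/p(x)+1}$ combined with the monotonicity of $t\mapsto\|f\|_{L^{p(\cdot)}(\tilde{B}(x,t))}$ recovers the integral form for the local piece.

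For the tail $T_{\Omega}f_{2}$, I would follow the same Fubini-type argument used in the proof of Theorem \ref{teo7}: for $z\in\tilde{B}(x,r)$ and $|z-y|\ge r$ one has $|x-y|\le 2|z-y|$, hence by generalized Minkowski and the kernel bound
$\|T_{\Omega}f_{2}\|_{L^{p(\cdot)}(\tilde{B}(x,r))}\lesssim
\Big(\int_{E\setminus\tilde{B}(x,2r)}\frac{|\Omega(z-y)||f(y)|}{|x-y|^{n}}dy\Big)\|\chi_{\tilde{B}(x,r)}\|_{L^{p(\cdot)}(E)}$. I then pick $\gamma>n/p(x)$, write $|x-y|^{-n}=|x-y|^{\gamma-n}\cdot\gamma\int_{|x-y|}^{diam(E)}t^{-\gamma-1}dt$, swap the order of integration by Fubini, and apply the generalized H\"{o}lder inequality with $\tfrac{1}{p(\cdot)}+\tfrac{1}{s}+\tfrac{1}{\nu(\cdot)}=1$, together with Lemma \ref{Lemma1} (for $\alpha=0$) and (\ref{0*}). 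Invoking (\ref{100}) for $\|\chi_{\tilde{B}(x,r)}\|_{L^{p(\cdot)}(E)}\lesssim r^{n/p(x)}$ delivers the desired pointwise estimate.

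Having the pointwise estimate, the norm bound on $VL_{\Pi}^{p(\cdot),w(\cdot)}(E)$ follows by dividing through by $w(x,r)^{1/p(x)}$, bounding the inner norm by $\|f\|_{VL_{\Pi}^{p(\cdot),w(\cdot)}(E)}\,w_{1}^{1/p(x)}(x,t)$, and applying the Zygmund-type condition $\int_{r}^{diam(E)}w^{1/p(x)}(x,t)\,dt/t\lesssim w^{1/p(x)}(x,r)$; the vanishing property is obtained from the conditions (\ref{2*})--(\ref{3*}) and the hypothesis $c_{\delta}<\infty$, exactly as in Theorem \ref{teo7}. Finally, for the maximal operator, the remark following Lemma \ref{lemma100} gives $M_{\Omega}f(x)\lesssim\widetilde{T}_{|\Omega|}(|f|)(x)$; since the argument above goes through for $\widetilde{T}_{|\Omega|}$ without change, (\ref{20}) follows immediately.

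The main technical obstacle I anticipate is the tail estimate for the singular kernel: without the fractional smoothing factor $|x-y|^{\alpha(x)}$, the exponent balance in the H\"{o}lder step is tighter, and one must verify that the auxiliary exponent $\gamma$ can be chosen so that both $n+(\gamma-n)(p(\cdot)/s')'$ stays integrable and $\gamma>n/p(x)$; this is where the hypothesis $\tfrac{s}{s-1}<p^{-}$ is essential and where most of the care is required. Everything else is an application of the already-established machinery.
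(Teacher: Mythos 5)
Your proposal is correct and follows essentially the same route as the paper: the paper presents this corollary as a direct specialization of Theorem \ref{teo7} at $\alpha(\cdot)\equiv 0$, and your re-derivation — decomposing $f=f_1+f_2$, handling $T_\Omega f_1$ via the $L^{p(\cdot)}$-boundedness from Corollary \ref{Corollary0}, estimating the tail $T_\Omega f_2$ by the same Fubini/H\"older/Lemma~\ref{Lemma1} argument as in (\ref{13*}) with $\alpha=0$, then applying the Zygmund condition and (\ref{2*})--(\ref{3*}), and finally using $M_\Omega f\lesssim \widetilde{T}_{|\Omega|}(|f|)$ for the maximal operator — is exactly the specialization the paper intends.
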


\begin{theorem}
\label{teo8}\textbf{(Adams type result with variable }$\alpha \left(
x\right) $\textbf{) }(our main result) Let $E$, $\Omega $, $p\left( x\right) 
$, $q\left( x\right) $, $\alpha \left( x\right) $ be the same as in Theorem %
\ref{teo7}. Then, for $\frac{s}{s-1}<p^{-}\leq p\left( \cdot \right) <\frac{n%
}{\alpha \left( \cdot \right) }$ , the following pointwise estimate 
\begin{equation}
\left\vert I_{\Omega ,\alpha \left( \cdot \right) }f\left( x\right)
\right\vert \lesssim r^{\alpha \left( x\right) }M_{\Omega }f\left( x\right)
+\int\limits_{r}^{diam\left( E\right) }t^{\alpha \left( x\right) -\frac{n}{%
p\left( x\right) }-1}\left\Vert f\right\Vert _{L_{p}\left( \tilde{B}%
(x,t)\right) }dt  \label{15}
\end{equation}%
holds for any ball $\tilde{B}(x,r)$ and for all $f\in L_{loc}^{p\left( \cdot
\right) }\left( E\right) $.

The function $w\left( x,t\right) $ satisfies (\ref{43}), (\ref{2*})-(\ref{3*}%
) as well as the following conditions:%
\begin{equation*}
\int\limits_{r}^{diam\left( E\right) }\frac{w^{\frac{1}{p\left( x\right) }%
}(x,t)}{t}dt\lesssim w^{\frac{1}{p\left( x\right) }}\left( x,r\right) ,
\end{equation*}%
\begin{equation}
\int\limits_{r}^{diam\left( E\right) }\frac{w^{\frac{1}{p\left( x\right) }%
}(x,t)}{t^{1-\alpha \left( x\right) }}dt\lesssim r^{-\frac{\alpha \left(
x\right) p\left( x\right) }{q\left( x\right) -p\left( x\right) }},
\label{16}
\end{equation}%
where $p\left( x\right) <q\left( x\right) $. Then the operators $I_{\Omega
,\alpha \left( \cdot \right) }$ and $M_{\Omega ,\alpha \left( \cdot \right)
} $ are $\left( VL_{\Pi }^{p\left( \cdot \right) ,w^{\frac{1}{p\left( \cdot
\right) }}}\left( E\right) \rightarrow VL_{\Pi }^{q\left( \cdot \right) ,w^{%
\frac{1}{q\left( \cdot \right) }}}\left( E\right) \right) $-bounded.
Moreover,%
\begin{equation*}
\left\Vert I_{\Omega ,\alpha \left( \cdot \right) }f\right\Vert _{VL_{\Pi
}^{q\left( \cdot \right) ,w^{\frac{1}{q\left( \cdot \right) }}}\left(
E\right) }\lesssim \left\Vert f\right\Vert _{VL_{\Pi }^{p\left( \cdot
\right) ,w^{\frac{1}{p\left( \cdot \right) }}}\left( E\right) },
\end{equation*}%
\begin{equation*}
\left\Vert M_{\Omega ,\alpha \left( \cdot \right) }f\right\Vert _{VL_{\Pi
}^{q\left( \cdot \right) ,w^{\frac{1}{q\left( \cdot \right) }}}\left(
E\right) }\lesssim \left\Vert f\right\Vert _{VL_{\Pi }^{p\left( \cdot
\right) ,w^{\frac{1}{p\left( \cdot \right) }}}\left( E\right) }.
\end{equation*}
\end{theorem}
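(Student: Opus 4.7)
The plan is to follow the classical Adams strategy (Hedberg's trick), adapted to the rough, variable-exponent, vanishing-Morrey setting, and using the building blocks already established in Lemma \ref{Lemma1}, Theorem \ref{teo1}, and the calculations in the proof of Theorem \ref{teo7}.

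\textbf{Step 1 (pointwise estimate \eqref{15}).} Given $x\in E$ and $r>0$, split $f=f_1+f_2$ with $f_1=f\chi_{\tilde{B}(x,2r)}$, so that
\[
I_{\Omega,\alpha(\cdot)}f(x)=I_{\Omega,\alpha(\cdot)}f_1(x)+I_{\Omega,\alpha(\cdot)}f_2(x).
\]
For the local piece $I_{\Omega,\alpha(\cdot)}f_1(x)$, I would use the dyadic decomposition already carried out in \eqref{4.2} (of the proof of Lemma \ref{Lemma2}): summing over the annuli $2^{-j-1}r\le|x-y|<2^{-j}r$ gives
\[
|I_{\Omega,\alpha(\cdot)}f_1(x)|\lesssim r^{\alpha(x)}M_\Omega f(x),
\]
which produces the first summand in \eqref{15}. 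For the far piece $I_{\Omega,\alpha(\cdot)}f_2(x)$, I would repeat the argument that produced \eqref{13*} in the proof of Theorem \ref{teo7}: introduce $\gamma>n/q(x)$, write $|x-y|^{-(n-\alpha(x))}\lesssim|x-y|^{-(n-\alpha(x)-\gamma)}\int_{|x-y|}^{\operatorname{diam}E}t^{-\gamma-1}\,dt$, swap integrals by Fubini, and apply the generalized H\"older inequality in $L^{p(\cdot)}(E)$ together with Lemma \ref{Lemma1} and \eqref{0*}. This yields
\[
|I_{\Omega,\alpha(\cdot)}f_2(x)|\lesssim\int_{r}^{\operatorname{diam}(E)}t^{\alpha(x)-\frac{n}{p(x)}-1}\|f\|_{L^{p(\cdot)}(\tilde{B}(x,t))}\,dt,
\]
which is the second summand in \eqref{15}.

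\textbf{Step 2 (Hedberg optimization).} Using the Morrey-type bound $\|f\|_{L^{p(\cdot)}(\tilde{B}(x,t))}\lesssim t^{n/p(x)}\,w^{1/p(x)}(x,t)\,\|f\|_{VL^{p(\cdot),w^{1/p(\cdot)}}_{\Pi}(E)}$ (which follows at once from the definition of the norm), the integral term in \eqref{15} is controlled by
\[
\|f\|_{VL^{p(\cdot),w^{1/p(\cdot)}}_{\Pi}(E)}\int_{r}^{\operatorname{diam}(E)}\frac{w^{\frac{1}{p(x)}}(x,t)}{t^{1-\alpha(x)}}\,dt
\lesssim \|f\|_{VL^{p(\cdot),w^{1/p(\cdot)}}_{\Pi}(E)}\,r^{-\frac{\alpha(x)p(x)}{q(x)-p(x)}}
\]
by assumption \eqref{16}. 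Substituting this into \eqref{15} and optimizing $r$ in the standard Hedberg fashion by equating $r^{\alpha(x)}M_\Omega f(x)$ with the second quantity, one finds
\[
r=\Bigl(\|f\|_{VL^{p(\cdot),w^{1/p(\cdot)}}_{\Pi}(E)}\big/M_\Omega f(x)\Bigr)^{\frac{q(x)-p(x)}{\alpha(x)q(x)}},
\]
which, together with the relation $\frac{1}{q(\cdot)}=\frac{1}{p(\cdot)}-\frac{\alpha(\cdot)}{n}$, produces the pointwise domination
\[
|I_{\Omega,\alpha(\cdot)}f(x)|\lesssim \bigl(M_\Omega f(x)\bigr)^{p(x)/q(x)}\,\|f\|_{VL^{p(\cdot),w^{1/p(\cdot)}}_{\Pi}(E)}^{1-p(x)/q(x)}.
\]

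\textbf{Step 3 (Morrey and vanishing norms).} Raising to the power $q(\cdot)$, integrating on $\tilde{B}(x,r)$, and invoking the $L^{p(\cdot)}(E)$-boundedness of $M_\Omega$ (Corollary after Theorem \ref{teo3}, combined with Lemma \ref{lemma100}), I would obtain
\[
r^{-\frac{n}{q(x)}}\|I_{\Omega,\alpha(\cdot)}f\|_{L^{q(\cdot)}(\tilde{B}(x,r))}\lesssim w(x,r)^{\frac{1}{q(x)}}\,\|f\|_{VL^{p(\cdot),w^{1/p(\cdot)}}_{\Pi}(E)},
\]
which gives the Morrey-norm bound after dividing by $w^{1/q(x)}(x,r)$ and taking the supremum. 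The vanishing property is then transferred from $f$ to $I_{\Omega,\alpha(\cdot)}f$ exactly as in Theorem \ref{teo7}: the exponent $1-p(\cdot)/q(\cdot)>0$ in the Hedberg inequality above forces the quotient $\mathfrak{M}_{q(\cdot),w}(I_{\Omega,\alpha(\cdot)}f;x,r)$ to inherit the $r\to 0$ decay of $\mathfrak{M}_{p(\cdot),w}(f;x,r)$, once one checks compatibility with \eqref{2*}--\eqref{3*}. Finally, the estimate for $M_{\Omega,\alpha(\cdot)}$ is immediate from the pointwise bound $M_{\Omega,\alpha(\cdot)}f\lesssim \widetilde{T}_{|\Omega|,\alpha(\cdot)}(|f|)$ supplied by Lemma \ref{lemma100}.

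The main obstacle I expect lies in \textbf{Step 2}: keeping the Hedberg optimization rigorous when $\alpha(\cdot)$, $p(\cdot)$, $q(\cdot)$ are all non-constant, so that exponents such as $p(x)/q(x)$ and $(q(x)-p(x))/(\alpha(x)q(x))$ vary with $x$. Care is required to ensure that raising the inequality to the power $q(x)$ and then integrating over $\tilde{B}(x,r)$ produces the $L^{p(\cdot)}$-norm of $M_\Omega f$ on the right-hand side (so that Theorem \ref{teo1}, via Lemma \ref{lemma100}, can be applied), and to guarantee that the Zygmund-type hypothesis \eqref{16} precisely provides the compensating power of $r$. The verification of the vanishing condition under these variable-exponent manipulations is the other delicate point, and is where conditions \eqref{2*}--\eqref{3*} are used.
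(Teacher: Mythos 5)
Your proposal is correct and follows essentially the same argument as the paper's proof: the split $f=f_1+f_2$, the pointwise bound \eqref{15}, the use of the Zygmund hypothesis \eqref{16} together with the Morrey-norm bound $\|f\|_{L^{p(\cdot)}(\tilde B(x,t))}\lesssim t^{n/p(x)}w^{1/p(x)}(x,t)\|f\|$ to dominate the far piece by $r^{-\alpha(x)p(x)/(q(x)-p(x))}\|f\|$, the Hedberg optimization in $r$, and the reduction to the $L^{p(\cdot)}$-boundedness of $M_\Omega$ and Lemma \ref{lemma100} for $M_{\Omega,\alpha(\cdot)}$. One small note: your optimizing exponent $\frac{q(x)-p(x)}{\alpha(x)q(x)}$ is the one that actually equates the two terms and yields \eqref{17}; the paper's stated choice $\frac{q(x)-p(x)}{\alpha(x)p(x)}$ is a typo.
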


\begin{proof}
As in the proof of Theorem \ref{teo7}, we represent the function $f$ in the
form (\ref{42}) and have%
\begin{equation*}
I_{\Omega ,\alpha \left( \cdot \right) }f\left( x\right) =I_{\Omega ,\alpha
\left( \cdot \right) }f_{1}\left( x\right) +I_{\Omega ,\alpha \left( \cdot
\right) }f_{2}\left( x\right) .
\end{equation*}%
For $I_{\Omega ,\alpha \left( \cdot \right) }f_{1}\left( x\right) $, similar
to the proof of (\ref{12}), we obtain the following pointwise estimate:%
\begin{equation}
\left\vert I_{\Omega ,\alpha \left( \cdot \right) }f_{1}\left( x\right)
\right\vert \lesssim t^{\alpha \left( x\right) }M_{\Omega }f\left( x\right) .
\label{8}
\end{equation}%
For $I_{\Omega ,\alpha \left( \cdot \right) }f_{2}\left( x\right) $, similar
to the proof of (\ref{13*}), applying Fubini's theorem, H\"{o}lder's
inequality and (\ref{0*}), we get%
\begin{equation}
\left\vert I_{\Omega ,\alpha \left( \cdot \right) }f_{2}\left( x\right)
\right\vert \lesssim \int\limits_{r}^{diam\left( E\right) }t^{\alpha \left(
x\right) -\frac{n}{p\left( x\right) }-1}\left\Vert f\right\Vert
_{L_{p}\left( \tilde{B}(x,t)\right) }dt  \label{9}
\end{equation}%
and by (\ref{8}) and (\ref{9}) complete the proof of (\ref{15}).

Since $M_{\Omega ,\alpha \left( \cdot \right) }\left( f\right) \lesssim
I_{\left\vert \Omega \right\vert ,\alpha \left( \cdot \right) }\left(
\left\vert f\right\vert \right) $ (see Lemma \ref{lemma100}), it suffices to
treat only the case of the operator $I_{\Omega ,\alpha \left( \cdot \right)
} $. In this sense, by (\ref{15}) and (\ref{16}), we obtain%
\begin{equation*}
\left\vert I_{\Omega ,\alpha \left( \cdot \right) }f\left( x\right)
\right\vert \lesssim r^{\alpha \left( x\right) }M_{\Omega }f\left( x\right)
+r^{-\frac{\alpha \left( x\right) p\left( x\right) }{q\left( x\right)
-p\left( x\right) }}\left\Vert f\right\Vert _{VL_{\Pi }^{p\left( \cdot
\right) ,w\left( \cdot \right) }\left( E\right) }.
\end{equation*}%
Then, choosing $r=\left( \frac{\left\Vert f\right\Vert _{VL_{\Pi }^{p\left(
\cdot \right) ,w\left( \cdot \right) }\left( E\right) }}{M_{\Omega }f\left(
x\right) }\right) ^{\frac{q\left( x\right) -p\left( x\right) }{\alpha \left(
x\right) p\left( x\right) }}$ for every $x\in E$ supposing that $f$ is not
equal $0$, thus we have%
\begin{equation}
\left\vert I_{\Omega ,\alpha \left( \cdot \right) }f\left( x\right)
\right\vert \lesssim \left( M_{\Omega }f\left( x\right) \right) ^{\frac{%
p\left( x\right) }{q\left( x\right) }}\left\Vert f\right\Vert _{VL_{\Pi
}^{p\left( \cdot \right) ,w\left( \cdot \right) }\left( E\right) }^{1-\frac{%
p\left( x\right) }{q\left( x\right) }}.  \label{17}
\end{equation}%
Finally, by Definition \ref{definition}, (\ref{17}) and (\ref{20}) we get%
\begin{eqnarray*}
\left\Vert I_{\Omega ,\alpha \left( \cdot \right) }f\right\Vert _{VL_{\Pi
}^{q\left( \cdot \right) ,w^{\frac{1}{q\left( \cdot \right) }}}\left(
E\right) } &=&\sup\limits_{x\in \Pi ,r>0}\frac{r^{-\frac{n}{q\left( x\right) 
}}\left\Vert I_{\Omega ,\alpha \left( \cdot \right) }f\right\Vert
_{L^{p\left( \cdot \right) }\left( \tilde{B}(x,r)\right) }}{w(x,r)^{\frac{1}{%
q\left( x\right) }}} \\
&\lesssim &\left\Vert f\right\Vert _{VL_{\Pi }^{p\left( \cdot \right)
,w\left( \cdot \right) }\left( E\right) }^{1-\frac{p\left( x\right) }{%
q\left( x\right) }}\sup\limits_{x\in \Pi ,r>0}\frac{r^{-\frac{n}{q\left(
x\right) }}}{w(x,r)^{\frac{1}{q\left( x\right) }}}\left\Vert M_{\Omega
}f\right\Vert _{L^{p\left( \cdot \right) }\left( \tilde{B}(x,r)\right) }^{%
\frac{p\left( x\right) }{q\left( x\right) }} \\
&\lesssim &\left\Vert f\right\Vert _{VL_{\Pi }^{p\left( \cdot \right)
,w\left( \cdot \right) }\left( E\right) }^{1-\frac{p\left( x\right) }{%
q\left( x\right) }}\left( \sup\limits_{x\in \Pi ,r>0}\frac{r^{-\frac{n}{%
p\left( x\right) }}}{w(x,r)^{\frac{1}{p\left( x\right) }}}\left\Vert
M_{\Omega }f\right\Vert _{L^{p\left( \cdot \right) }\left( \tilde{B}%
(x,r)\right) }\right) ^{\frac{p\left( x\right) }{q\left( x\right) }} \\
&\lesssim &\left\Vert f\right\Vert _{VL_{\Pi }^{p\left( \cdot \right)
,w\left( \cdot \right) }\left( E\right) }^{1-\frac{p\left( x\right) }{%
q\left( x\right) }}\left\Vert M_{\Omega }f\right\Vert _{VL_{\Pi }^{p\left(
\cdot \right) ,w^{\frac{1}{p\left( \cdot \right) }}}\left( E\right) }^{\frac{%
p\left( x\right) }{q\left( x\right) }} \\
&\lesssim &\left\Vert f\right\Vert _{VL_{\Pi }^{p\left( \cdot \right) ,w^{%
\frac{1}{p\left( \cdot \right) }}}\left( E\right) }
\end{eqnarray*}%
if $p\left( x\right) <q\left( x\right) $ and 
\begin{equation*}
\lim_{r\rightarrow 0}\sup\limits_{x\in \Pi }\frac{r^{-\frac{n}{q\left(
x\right) }}\left\Vert I_{\Omega ,\alpha \left( \cdot \right) }f\right\Vert
_{L^{q\left( \cdot \right) }\left( \tilde{B}(x,r)\right) }}{w_{2}(x,t)^{%
\frac{1}{q\left( x\right) }}}\lesssim \lim_{r\rightarrow 0}\sup\limits_{x\in
\Pi }\frac{r^{-\frac{n}{p\left( x\right) }}\left\Vert f\right\Vert
_{L^{p\left( \cdot \right) }\left( \tilde{B}(x,r)\right) }}{w_{1}(x,t)^{%
\frac{1}{p\left( x\right) }}}=0,
\end{equation*}%
which completes the proof of Theorem \ref{teo8}.
\end{proof}

\begin{corollary}
Obviously, under the conditions of Theorem \ref{teo8}, if the rough $\left(
p,q\right) $-admissible $T_{\Omega ,\alpha \left( \cdot \right) }$-potential
type operator is $\left( L^{p\left( \cdot \right) }\left( E\right)
\rightarrow L^{q\left( \cdot \right) }\left( E\right) \right) $-bounded and
satisfies (\ref{e1}), the result in Theorem \ref{teo8} still holds.
\end{corollary}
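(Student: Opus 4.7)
The plan is to imitate the proof of Theorem \ref{teo8} essentially verbatim, using the two abstract hypotheses in place of the concrete properties of $I_{\Omega,\alpha(\cdot)}$. First I would fix $x\in\Pi$ and $r>0$ and split $f=f_{1}+f_{2}$ with $f_{1}=f\chi_{\tilde{B}(x,2r)}$ as in \eqref{42}, so that
\begin{equation*}
\bigl|T_{\Omega,\alpha(\cdot)}f(x)\bigr|\leq \bigl|T_{\Omega,\alpha(\cdot)}f_{1}(x)\bigr|+\bigl|T_{\Omega,\alpha(\cdot)}f_{2}(x)\bigr|
\end{equation*}
by sublinearity. For the local piece $T_{\Omega,\alpha(\cdot)}f_{1}$, I would run the Hedberg argument of Theorem \ref{teo8} (that produced \eqref{8}) using the $(L^{p(\cdot)}\to L^{q(\cdot)})$-boundedness of $T_{\Omega,\alpha(\cdot)}$ instead of Theorem \ref{teo1}, yielding $|T_{\Omega,\alpha(\cdot)}f_{1}(x)|\lesssim r^{\alpha(x)}M_{\Omega}f(x)$ exactly as before.

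For the nonlocal piece $T_{\Omega,\alpha(\cdot)}f_{2}$, the size condition \eqref{e1} pointwise dominates $T_{\Omega,\alpha(\cdot)}f_{2}(x)$ by the same integral that appears for the Riesz-type operator in the proof of Theorem \ref{teo8}; hence the estimate \eqref{9} carries over unchanged, via Fubini, generalized H\"{o}lder's inequality, Lemma \ref{Lemma1} and \eqref{0*}. Combining the two bounds gives the pointwise estimate \eqref{15} with $T_{\Omega,\alpha(\cdot)}$ in place of $I_{\Omega,\alpha(\cdot)}$, i.e.
\begin{equation*}
\bigl|T_{\Omega,\alpha(\cdot)}f(x)\bigr|\lesssim r^{\alpha(x)}M_{\Omega}f(x)+\int\limits_{r}^{diam(E)}t^{\alpha(x)-\tfrac{n}{p(x)}-1}\|f\|_{L^{p(\cdot)}(\tilde{B}(x,t))}\,dt.
\end{equation*}

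From here the argument proceeds exactly as in Theorem \ref{teo8}: condition \eqref{16} allows one to replace the tail integral by $r^{-\alpha(x)p(x)/(q(x)-p(x))}\|f\|_{VL_{\Pi}^{p(\cdot),w(\cdot)}(E)}$, and then the Hedberg optimization $r=(\|f\|_{VL}/M_{\Omega}f(x))^{(q(x)-p(x))/(\alpha(x)p(x))}$ produces the interpolation-type inequality \eqref{17} with $T_{\Omega,\alpha(\cdot)}$ in place of $I_{\Omega,\alpha(\cdot)}$. Finally, inserting this into the norm $\|\cdot\|_{VL_{\Pi}^{q(\cdot),w^{1/q(\cdot)}}(E)}$ and invoking the already-established boundedness \eqref{20} of $M_{\Omega}$ on $VL_{\Pi}^{p(\cdot),w^{1/p(\cdot)}}(E)$ gives both the norm estimate and, by the same limit as at the end of the proof of Theorem \ref{teo8}, the vanishing property.

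The only real step that needs care is verifying that the two abstract hypotheses on $T_{\Omega,\alpha(\cdot)}$ really are strong enough to replay every step; but by design the size condition \eqref{e1} supplies the kernel domination used in the nonlocal estimate, while the $(L^{p(\cdot)}\to L^{q(\cdot)})$ boundedness supplies the local estimate, so no new analytic input is needed. The rest is bookkeeping with sublinearity and the conditions \eqref{43}, \eqref{2*}, \eqref{3*}, \eqref{16} on $w$, which are untouched from Theorem \ref{teo8}.
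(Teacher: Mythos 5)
The nonlocal half of your argument is fine: the size condition \eqref{e1} supplies exactly the kernel domination that the estimate \eqref{9} requires, and the Fubini/H\"older computation there really does go through verbatim for any $T_{\Omega,\alpha(\cdot)}$ satisfying \eqref{e1}. But the local half does not, and this is where your proof breaks down.

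You claim that the Hedberg pointwise estimate $|T_{\Omega,\alpha(\cdot)}f_{1}(x)|\lesssim r^{\alpha(x)}M_{\Omega}f(x)$ "carries over exactly as before" by substituting the $(L^{p(\cdot)}\to L^{q(\cdot)})$-boundedness of $T_{\Omega,\alpha(\cdot)}$ for Theorem~\ref{teo1}. That is a misreading of how \eqref{8} is obtained in the proof of Theorem~\ref{teo8}. The estimate \eqref{8} is derived (as in the proof of \eqref{12} in Lemma~\ref{Lemma2}) by decomposing the ball $B(x,2r)$ into dyadic annuli $\{2^{-j-1}r\leq|x-y|<2^{-j}r\}$ and using the explicit kernel $\Omega(x-y)/|x-y|^{n-\alpha(x)}$ to bound each annular integral by a multiple of $M_{\Omega}f(x)$. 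This is a pointwise inequality at the fixed point $x$, and it uses nothing about $L^{p(\cdot)}$-norms; conversely, the abstract $(L^{p(\cdot)}\to L^{q(\cdot)})$-boundedness of $T_{\Omega,\alpha(\cdot)}$ produces only a norm inequality $\|T_{\Omega,\alpha(\cdot)}f_1\|_{L^{q(\cdot)}}\lesssim\|f_1\|_{L^{p(\cdot)}}$ and cannot produce any pointwise bound of the form $|T_{\Omega,\alpha(\cdot)}f_1(x)|\lesssim r^{\alpha(x)}M_{\Omega}f(x)$. The definition of a rough $(p,q)$-admissible $T_{\Omega,\alpha(\cdot)}$-potential type operator gives you only sublinearity, the nonlocal size condition \eqref{e1}, and the $L^{p(\cdot)}\to L^{q(\cdot)}$ boundedness; it gives you no control whatsoever on $T_{\Omega,\alpha(\cdot)}(f\chi_{B(x,2r)})(x)$ in terms of the kernel. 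Without a pointwise Hedberg bound for the local piece, the optimization $r=(\|f\|_{VL}/M_{\Omega}f(x))^{(q(x)-p(x))/(\alpha(x)p(x))}$ that produces \eqref{17} has no starting inequality to optimize, and the Adams-type proof collapses. (This is also why the analogous corollary after Theorem~\ref{teo7} genuinely is immediate from the abstract hypotheses, whereas this one is not: the Spanne argument only uses $\|T_{\Omega,\alpha(\cdot)}f_1\|_{L^{q(\cdot)}(\tilde B(x,r))}\lesssim\|f\|_{L^{p(\cdot)}(\tilde B(x,2r))}$ for the local piece, which \emph{does} follow from $(L^{p(\cdot)}\to L^{q(\cdot)})$-boundedness.)

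To make your argument work you would need to strengthen the admissibility hypothesis with a local kernel-domination assumption such as
\begin{equation*}
\chi_{B(z,r)}(x)\,\bigl|T_{\Omega,\alpha(\cdot)}\bigl(f\chi_{B(z,2r)}\bigr)(x)\bigr|\leq C\,\chi_{B(z,r)}(x)\int\limits_{B(z,2r)}\frac{|\Omega(x-y)|}{|x-y|^{n-\alpha(x)}}\,|f(y)|\,dy,
\end{equation*}
from which the dyadic annuli computation would again give $|T_{\Omega,\alpha(\cdot)}f_1(x)|\lesssim r^{\alpha(x)}M_{\Omega}f(x)$; both $I_{\Omega,\alpha(\cdot)}$ and $M_{\Omega,\alpha(\cdot)}$ do satisfy this, but it is not part of \eqref{e1} and it is not implied by the $(L^{p(\cdot)}\to L^{q(\cdot)})$-boundedness. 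As written, your proof conflates the pointwise Hedberg bound with the norm bound, and that conflation is the gap.
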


\begin{remark}
Let $E$ be a bounded open set and $\lambda \left( x\right) $ be a measurable
function on $E$ with values in $\left[ 0,n\right] $. Then, the variable
exponent vanishing Morrey space $VL_{\Pi }^{p\left( \cdot \right) ,\lambda
\left( \cdot \right) }\equiv VL_{\Pi }^{p\left( \cdot \right) ,\lambda
\left( \cdot \right) }\left( E\right) $ is defined by%
\begin{equation*}
VL_{\Pi }^{p\left( \cdot \right) ,\lambda \left( \cdot \right) }\equiv
VL_{\Pi }^{p\left( \cdot \right) ,\lambda \left( \cdot \right) }\left(
E\right) =\left\{ 
\begin{array}{c}
f\in L^{p\left( \cdot \right) ,\lambda \left( \cdot \right) }\left( E\right)
: \\ 
\Vert f\Vert _{VL_{\Pi }^{p\left( \cdot \right) ,\lambda \left( \cdot
\right) }}=\lim\limits_{r\rightarrow 0}\sup\limits_{\substack{ x\in E  \\ %
0<t<r}}t^{-\frac{\lambda \left( x\right) }{p\left( x\right) }}\left\Vert
f\chi _{\tilde{B}(x,t)}\right\Vert _{L^{p\left( \cdot \right) }\left(
E\right) }=0%
\end{array}%
\right\} .
\end{equation*}
\end{remark}

\begin{corollary}
\label{Corollary1}Let $E$, $\Omega $, $p\left( x\right) $, $\alpha \left(
x\right) $ be the same as in Theorem \ref{teo7}. Define $q\left( x\right) $
by $\frac{1}{q\left( x\right) }=\frac{1}{p\left( x\right) }-\frac{\alpha
\left( x\right) }{n-\lambda \left( x\right) }$. Let also the following
conditions hold:%
\begin{equation*}
\lambda \left( x\right) \geq 0,\qquad \limfunc{esssup}\limits_{x\in E}\left[
\lambda \left( x\right) +\alpha \left( x\right) p\left( x\right) \right] <n.
\end{equation*}%
Then for $\left( p_{-}\right) ^{\prime }\leq s$, the operators $I_{\Omega
,\alpha \left( \cdot \right) }$ and $M_{\Omega ,\alpha \left( \cdot \right)
} $ are $\left( VL_{\Pi }^{p\left( \cdot \right) ,\lambda \left( \cdot
\right) }\left( E\right) \rightarrow VL_{\Pi }^{q\left( \cdot \right)
,\lambda \left( \cdot \right) }\left( E\right) \right) $-bounded. Moreover,%
\begin{equation*}
\left\Vert I_{\Omega ,\alpha \left( \cdot \right) }f\right\Vert _{VL_{\Pi
}^{q\left( \cdot \right) ,\lambda \left( \cdot \right) }\left( E\right)
}\lesssim \left\Vert f\right\Vert _{VL_{\Pi }^{p\left( \cdot \right)
,\lambda \left( \cdot \right) }\left( E\right) },
\end{equation*}%
\begin{equation*}
\left\Vert M_{\Omega ,\alpha \left( \cdot \right) }f\right\Vert _{VL_{\Pi
}^{q\left( \cdot \right) ,\lambda \left( \cdot \right) }\left( E\right)
}\lesssim \left\Vert f\right\Vert _{VL_{\Pi }^{p\left( \cdot \right)
,\lambda \left( \cdot \right) }\left( E\right) }.
\end{equation*}
\end{corollary}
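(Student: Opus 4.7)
Proof plan.

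The corollary is the vanishing-Morrey analogue of Lemma \ref{Lemma2}. Since $VL_\Pi^{p(\cdot),\lambda(\cdot)}(E)$ embeds into $L^{p(\cdot),\lambda(\cdot)}(E)$ with coincident norm, Lemma \ref{Lemma2} directly furnishes the required Morrey-norm bound
\[
\|I_{\Omega,\alpha(\cdot)} f\|_{L^{q(\cdot),\lambda(\cdot)}(E)} \lesssim \|f\|_{L^{p(\cdot),\lambda(\cdot)}(E)}
\]
under the Adams-type exponent relation $\frac{1}{q(\cdot)} = \frac{1}{p(\cdot)} - \frac{\alpha(\cdot)}{n-\lambda(\cdot)}$. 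The estimate for $M_{\Omega,\alpha(\cdot)}$ then follows from the pointwise control $M_{\Omega,\alpha(\cdot)} f \lesssim \widetilde{T}_{|\Omega|,\alpha(\cdot)}(|f|)$ supplied by Lemma \ref{lemma100}, combined with the same Hedberg argument of Lemma \ref{Lemma2} applied to $\widetilde{T}_{|\Omega|,\alpha(\cdot)}$.

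To upgrade this to a vanishing statement, I would use a density/truncation argument. Given $f \in VL_\Pi^{p(\cdot),\lambda(\cdot)}(E)$ and $\varepsilon > 0$, decompose $f = g_\varepsilon + h_\varepsilon$ with $g_\varepsilon \in L^\infty(E)$ having compact support in $E$ and $\|h_\varepsilon\|_{L^{p(\cdot),\lambda(\cdot)}(E)} < \varepsilon$. Applying the Morrey-norm bound above to $h_\varepsilon$ yields
\[
r^{-\lambda(x)/q(x)}\, \|I_{\Omega,\alpha(\cdot)} h_\varepsilon\|_{L^{q(\cdot)}(\tilde{B}(x,r))} \lesssim \varepsilon
\]
uniformly in $x\in\Pi$ and $r>0$; whereas for the bounded, compactly supported part $g_\varepsilon$ a direct estimate combining Lemma \ref{Lemma1} with the boundedness of $g_\varepsilon$ shows that the corresponding quantity tends to $0$ as $r \to 0$. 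Passing to the limit $\varepsilon \to 0$ yields the vanishing property, and the same argument applied to $\widetilde{T}_{|\Omega|,\alpha(\cdot)}$ transfers it to $M_{\Omega,\alpha(\cdot)}$ via Lemma \ref{lemma100}.

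The principal obstacle is that Corollary \ref{Corollary1} is \emph{not} a direct specialization of Theorem \ref{teo7}: the natural choice $w_1(x,r) = w_2(x,r) = r^{\lambda(x)}$ does not satisfy the Zygmund condition (\ref{316}) under the Adams exponent, because the left-hand side of (\ref{316}) is bounded below by a positive constant as $r \to 0^+$ whereas the required upper bound $w_2^{1/q(x)}(x,r) = r^{\lambda(x)/q(x)}$ vanishes in the same limit. This structural mismatch is exactly what forces one to invoke the Hedberg-type pointwise estimate from Lemma \ref{Lemma2} for the norm bound, and the density/truncation scheme above for the vanishing part, rather than appealing directly to Theorem \ref{teo7}.
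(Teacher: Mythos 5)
Your identification of Lemma \ref{Lemma2} as the source of the Morrey-norm bound under the Adams exponent relation $\frac{1}{q(\cdot)}=\frac{1}{p(\cdot)}-\frac{\alpha(\cdot)}{n-\lambda(\cdot)}$ is correct, and your observation that the natural power-weight specialization of Theorem~\ref{teo7} is blocked by the Spanne Zygmund condition~(\ref{316}) is also valid. However, you overlook that the corollary is placed immediately after Theorem~\ref{teo8} (the Adams-type result) and its accompanying remark, which is the intended precursor. With $w(x,t)=t^{\lambda(x)-n}$, one checks by direct computation that both Zygmund-type conditions in Theorem~\ref{teo8} hold (condition~(\ref{16}) is in fact satisfied with equivalence, since $-\frac{\alpha(x)p(x)}{q(x)-p(x)}=\frac{\lambda(x)-n+\alpha(x)p(x)}{p(x)}$ under the Adams relation), and the weight makes $VL_\Pi^{p(\cdot),w^{1/p(\cdot)}}$ and $VL_\Pi^{q(\cdot),w^{1/q(\cdot)}}$ coincide with $VL_\Pi^{p(\cdot),\lambda(\cdot)}$ and $VL_\Pi^{q(\cdot),\lambda(\cdot)}$ respectively. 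So the ``structural mismatch'' you describe is specific to Theorem~\ref{teo7}; the Adams theorem is designed precisely so that the mismatch disappears. (The remaining wrinkle, which the paper does not acknowledge, is that the non-triviality condition~(\ref{2*}) fails for this weight whenever $\lambda(x)>0$, so a strictly literal reading of the hypotheses of Theorem~\ref{teo8} is not satisfied either.)

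Your alternative route via density/truncation is plausible in outline, but it rests on an unproved step: you assume that every $f\in VL_\Pi^{p(\cdot),\lambda(\cdot)}(E)$ can be split as $f=g_\varepsilon+h_\varepsilon$ with $g_\varepsilon\in L^\infty$ of compact support and $\|h_\varepsilon\|_{L^{p(\cdot),\lambda(\cdot)}}<\varepsilon$. Density of bounded compactly supported functions in a vanishing Morrey space is a genuine lemma requiring proof; the natural truncation $g_N=f\chi_{\{|f|\le N\}}$ works, but one must separately control $\sup_{x,\,r>\delta}r^{-\lambda(x)/p(x)}\|f\chi_{\{|f|>N\}}\|_{L^{p(\cdot)}(\tilde B(x,r))}$ (which tends to $0$ as $N\to\infty$ since $E$ is bounded and $f\in L^{p(\cdot)}(E)$) and $\sup_{x,\,r<\delta}$ (small by the vanishing hypothesis), and this argument is not contained in the paper. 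Your treatment of the bounded piece also implicitly uses that $I_{\Omega,\alpha(\cdot)}$ maps $L^\infty_c(E)$ into $L^\infty(E)$, which does hold here because $\alpha_0>0$ and $E$ is bounded make the kernel locally integrable, but you should say so explicitly. In short: the norm estimate is right, your diagnosis of the obstacle to Theorem~\ref{teo7} is right, but the corollary is meant to come out of Theorem~\ref{teo8}, and your replacement vanishing argument leaves the key density lemma unproved.
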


In the case of $\lambda \left( x\right) \equiv 0$, for the spaces $%
L^{p\left( \cdot \right) }\left( E\right) $, from Corollary \ref{Corollary1}
we get the following:

\begin{corollary}
Let $E$, $\Omega $, $p\left( x\right) $, $q\left( x\right) $, $\alpha \left(
x\right) $ be the same as in Theorem \ref{teo7}. Then, the operators $%
I_{\Omega ,\alpha \left( \cdot \right) }$ and $M_{\Omega ,\alpha \left(
\cdot \right) }$ are $\left( L^{p\left( \cdot \right) }\left( E\right)
\rightarrow L^{q\left( \cdot \right) }\left( E\right) \right) $-bounded.
Moreover,%
\begin{equation*}
\left\Vert I_{\Omega ,\alpha \left( \cdot \right) }f\right\Vert _{L^{q\left(
\cdot \right) }\left( E\right) }\lesssim \left\Vert f\right\Vert
_{L^{p\left( \cdot \right) }\left( E\right) },
\end{equation*}%
\begin{equation*}
\left\Vert M_{\Omega ,\alpha \left( \cdot \right) }f\right\Vert _{L^{q\left(
\cdot \right) }\left( E\right) }\lesssim \left\Vert f\right\Vert
_{L^{p\left( \cdot \right) }\left( E\right) }.
\end{equation*}
\end{corollary}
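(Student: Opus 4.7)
The plan is to observe that this corollary is a direct specialization of Corollary \ref{Corollary1} (equivalently, of Theorem \ref{teo1} and Theorem \ref{teo3}) obtained by setting $\lambda(x)\equiv 0$. Indeed, with $w(x,r)=1$ (so $\lambda\equiv 0$) the weight $r^{-\lambda(x)/p(x)}$ is identically $1$, and on the bounded set $E$ the vanishing condition $\lim_{r\to 0}\sup_{x\in\Pi}\mathfrak{M}_{p(\cdot),1}(f;x,r)=0$ reduces to absolute continuity of the $L^{p(\cdot)}$-norm at small balls, which is automatic for any $f\in L^{p(\cdot)}(E)$. Hence $VL_{\Pi}^{p(\cdot),0}(E)=L^{p(\cdot)}(E)$, and the two inequalities to be proved are
\begin{equation*}
\left\Vert I_{\Omega,\alpha(\cdot)}f\right\Vert _{L^{q(\cdot)}(E)}\lesssim \left\Vert f\right\Vert _{L^{p(\cdot)}(E)}, \qquad \left\Vert M_{\Omega,\alpha(\cdot)}f\right\Vert _{L^{q(\cdot)}(E)}\lesssim \left\Vert f\right\Vert _{L^{p(\cdot)}(E)}.
\end{equation*}

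First I would verify that the hypotheses inherited from Theorem \ref{teo7} imply those of Theorem \ref{teo1}. The condition $\frac{s}{s-1}<p^{-}$ is equivalent to $(p^{\prime})_{+}<s$, which is strictly stronger than the assumption $(p^{\prime})_{+}\leq s$ required by Theorem \ref{teo1}. The condition $p(\cdot)<\frac{n}{\alpha(\cdot)}$ together with essential boundedness of $\alpha$ gives $\operatorname*{esssup}_{x\in E}\alpha(x)p(x)<n$, while $\alpha_{0}=\operatorname*{essinf}\alpha(x)>0$ is built into Theorem \ref{teo7}; together these are precisely (\ref{25}). The definition of $q(\cdot)$ via (\ref{26}) is the Sobolev exponent used in Theorem \ref{teo1}. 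So all hypotheses of Theorem \ref{teo1} hold.

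The estimate for $I_{\Omega,\alpha(\cdot)}$ then follows at once from (\ref{5}) in Theorem \ref{teo1}. For the fractional maximal operator $M_{\Omega,\alpha(\cdot)}$, I would apply Theorem \ref{teo3}: the pointwise domination $M_{\Omega,\alpha(\cdot)}f(x)\lesssim \widetilde{T}_{|\Omega|,\alpha(\cdot)}(|f|)(x)$ provided by Lemma \ref{lemma100}, combined with the remark following Lemma \ref{lemma100} that the proof of (\ref{5}) works verbatim for $\widetilde{T}_{|\Omega|,\alpha(\cdot)}$, yields (\ref{6}), which is precisely the second inequality.

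There is essentially no new work to perform; the corollary is a reformulation of Theorems \ref{teo1} and \ref{teo3} in the framework of vanishing generalized Morrey spaces with trivial weight. The only piece of bookkeeping is making sure the strict inequality $\frac{s}{s-1}<p^{-}$ in Theorem \ref{teo7} is compatible with the non-strict $(p^{\prime})_{+}\leq s$ used in Theorem \ref{teo1}, and checking that the vanishing condition collapses on a bounded domain when $\lambda\equiv 0$; neither point presents any real obstacle.
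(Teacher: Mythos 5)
Your proposal is correct and takes essentially the paper's own route: the paper obtains this corollary by the trivial specialization $\lambda(\cdot)\equiv 0$ of Corollary \ref{Corollary1}, and your reduction to the Lebesgue-space estimates (\ref{5}) of Theorem \ref{teo1} for $I_{\Omega,\alpha(\cdot)}$ and (\ref{6}) of Theorem \ref{teo3} (via the pointwise domination of Lemma \ref{lemma100}) for $M_{\Omega,\alpha(\cdot)}$ is exactly what that specialization rests on. One cosmetic remark: you need not (and strictly speaking cannot) deduce $\operatorname{esssup}_{x\in E}\alpha(x)p(x)<n$ from the pointwise inequality $p(\cdot)<n/\alpha(\cdot)$ alone, but this is harmless because condition (\ref{25}) is already assumed verbatim among the hypotheses of Theorem \ref{teo7}.
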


\subsection{Preliminaries on \textbf{variable exponent generalized Campanato
spaces }$\mathcal{C}_{\Pi }^{q\left( \cdot \right) ,\protect\gamma \left(
\cdot \right) }$}

\begin{flushleft}
In this section, we first introduce the variable exponent generalized
Campanato spaces and then obtain the boundedness of the commutators of the
operators $I_{\Omega ,\alpha \left( \cdot \right) }$, $M_{\Omega ,\alpha
\left( \cdot \right) }$, $T_{\Omega }$ and $M_{\Omega }$ on the spaces $%
VL_{\Pi }^{p\left( \cdot \right) ,w\left( \cdot \right) }\left( E\right) $.
\end{flushleft}

\begin{definition}
Let $1\leq q\left( \cdot \right) \leq q^{+}<\infty $ and $0\leq \gamma
\left( \cdot \right) <\frac{1}{n}$. Define the generalized Campanato space $%
\mathcal{C}_{\Pi }^{q\left( \cdot \right) ,\gamma \left( \cdot \right) }$
with variable exponents $q\left( \cdot \right) $, $\gamma \left( \cdot
\right) $ as follows: 
\begin{equation*}
\mathcal{C}_{\Pi }^{q\left( \cdot \right) ,\gamma \left( \cdot \right)
}\left( E\right) =\left\{ f\in L_{loc}^{q\left( \cdot \right) }\left( \tilde{%
B}\left( x,r\right) \right) :\left\Vert f\right\Vert _{\mathcal{C}_{\Pi
}^{q\left( \cdot \right) ,\gamma \left( \cdot \right) }\left( E\right)
}<\infty \right\} ,
\end{equation*}%
where 
\begin{equation*}
\left\Vert f\right\Vert _{\mathcal{C}_{\Pi }^{q\left( \cdot \right) ,\gamma
\left( \cdot \right) }\left( E\right) }=\sup_{x\in \Pi ,r>0}\left\vert
B(x,r)\right\vert ^{-\frac{1}{q(x)}-\gamma (x)}\left\Vert f-f_{B\left(
x,r\right) }\right\Vert _{L^{q\left( \cdot \right) }\left( \tilde{B}\left(
x,r\right) \right) }
\end{equation*}%
such that%
\begin{equation}
\left\Vert f-f_{B\left( x,r\right) }\right\Vert _{L^{q\left( \cdot \right)
}\left( \tilde{B}\left( x,r\right) \right) }\lesssim r^{\frac{n}{q\left(
x\right) }+n\gamma (x)}\left\Vert f\right\Vert _{\mathcal{C}_{\Pi }^{q\left(
\cdot \right) ,\gamma \left( \cdot \right) }\left( E\right) }.  \label{14}
\end{equation}%
When $\Pi =\left\{ x_{0}\right\} $ and $\Pi =E$, $\mathcal{C}_{\Pi
}^{q\left( \cdot \right) ,\gamma \left( \cdot \right) }\left( E\right) $
turns into the local generalized Campanato space $\mathcal{C}_{\left\{
x_{0}\right\} }^{q\left( \cdot \right) ,\gamma \left( \cdot \right) }\left(
E\right) $ and the global generalized Campanato space $\mathcal{C}%
_{E}^{q\left( \cdot \right) ,\gamma \left( \cdot \right) }\left( E\right) $,
respectively. If $q\left( \cdot \right) $, $\gamma \left( \cdot \right) $
are constant functions and $\Pi =E$, then the variable exponent generalized
Campanato space $\mathcal{C}_{\Pi }^{q\left( \cdot \right) ,\gamma \left(
\cdot \right) }\left( E\right) $ is exactly the usual Campanato space $%
\mathcal{C}^{q,\gamma }\left( E\right) $. If $\gamma \left( \cdot \right)
\equiv 0$ and $q\left( \cdot \right) \equiv q$, the generalized Campanato
space $\mathcal{C}_{\Pi }^{q\left( \cdot \right) ,\gamma \left( \cdot
\right) }\left( E\right) $ is just the central $BMO\left( E\right) $(the
local version of $BMO\left( E\right) $).
\end{definition}

\begin{theorem}
\label{teo9}Let $E$, $\Omega $, $p\left( x\right) $, $q\left( x\right) $, $%
\alpha \left( x\right) $ be the same as in Theorem \ref{teo7}. Let also $%
\frac{1}{p\left( \cdot \right) }=\frac{1}{p_{1}\left( \cdot \right) }+\frac{1%
}{p_{2}\left( \cdot \right) }$, $\frac{1}{q_{1}\left( \cdot \right) }=\frac{1%
}{p_{1}\left( \cdot \right) }-\frac{\alpha \left( \cdot \right) }{n}$ and $%
b\in \mathcal{C}_{\Pi }^{p_{2}\left( \cdot \right) ,\gamma \left( \cdot
\right) }\left( E\right) $. Suppose that $p_{1}\left( \cdot \right) $, $%
p_{2}\left( \cdot \right) $, $q\left( \cdot \right) $, $q_{1}\left( \cdot
\right) $ and $\alpha \left( \cdot \right) $ satisfy (\ref{1}). Then, for $%
\frac{s}{s-1}<p^{-}\leq p\left( \cdot \right) <\frac{n}{\alpha \left( \cdot
\right) }$ the following pointwise estimate 
\begin{equation}
\Vert \left[ b,I_{\Omega ,\alpha \left( \cdot \right) }\right] f\Vert
_{L_{q}(\tilde{B}(x,r))}\lesssim \Vert b\Vert _{\mathcal{C}_{\Pi
}^{p_{2}\left( \cdot \right) ,\gamma \left( \cdot \right) }}r^{\frac{n}{%
q\left( x\right) }}\dint\limits_{2r}^{diam\left( E\right) }\left( 1+\ln 
\frac{t}{r}\right) t^{n\gamma \left( x\right) -\frac{n}{q_{1}\left( x\right) 
}-1}\left\Vert f\right\Vert _{L^{p_{1}\left( \cdot \right) }\left( \tilde{B}%
(x,t)\right) }dt  \label{45}
\end{equation}%
holds for any ball $\tilde{B}(x,r)$ and for all $f\in L_{loc}^{p_{1}\left(
\cdot \right) }\left( E\right) $.

If the functions $w_{1}\left( x,r\right) $ and $w_{2}\left( x,r\right) $
satisfy (\ref{43}) as well as the following Zygmund condition%
\begin{equation}
\int\limits_{r}^{diam\left( E\right) }\left( 1+\ln \frac{t}{r}\right) \frac{%
w_{1}^{\frac{1}{p_{1}\left( x\right) }}(x,t)}{t^{1-\left( \alpha \left(
x\right) +n\gamma \left( x\right) \right) }}dt\lesssim w_{2}^{\frac{1}{%
q\left( x\right) }}(x,r)\,,\qquad r\in \left( 0,diam\left( E\right) \right]
\label{46}
\end{equation}%
and additionally these functions satisfy the conditions (\ref{2*})-(\ref{3*}%
), 
\begin{equation*}
d_{\delta }:=\dint\limits_{\delta }^{diam\left( E\right) }\sup_{x\in \Pi
}\left( 1+\ln \frac{t}{r}\right) \frac{w_{1}^{\frac{1}{p_{1}\left( x\right) }%
}(x,t)}{t^{1-\left( \alpha \left( x\right) +n\gamma \left( x\right) \right) }%
}dt<\infty ,\qquad \delta >0,
\end{equation*}%
then the operators $\left[ b,I_{\Omega ,\alpha \left( \cdot \right) }\right] 
$ and $\left[ b,M_{\Omega ,\alpha \left( \cdot \right) }\right] $ are $%
\left( VL_{\Pi }^{p_{1}\left( \cdot \right) ,w_{1}\left( \cdot \right)
}\left( E\right) \rightarrow VL_{\Pi }^{q\left( \cdot \right) ,w_{2}\left(
\cdot \right) }\left( E\right) \right) $-bounded. Moreover,%
\begin{equation*}
\left\Vert \left[ b,I_{\Omega ,\alpha \left( \cdot \right) }\right]
f\right\Vert _{VL_{\Pi }^{q\left( \cdot \right) ,w_{2}\left( \cdot \right)
}\left( E\right) }\lesssim \Vert b\Vert _{\mathcal{C}_{\Pi }^{p_{2}\left(
\cdot \right) ,\gamma \left( \cdot \right) }}\left\Vert f\right\Vert
_{VL_{\Pi }^{p_{1}\left( \cdot \right) ,w_{1}\left( \cdot \right) }\left(
E\right) },
\end{equation*}%
\begin{equation*}
\left\Vert \left[ b,M_{\Omega ,\alpha \left( \cdot \right) }\right]
f\right\Vert _{VL_{\Pi }^{q\left( \cdot \right) ,w_{2}\left( \cdot \right)
}\left( E\right) }\lesssim \Vert b\Vert _{\mathcal{C}_{\Pi }^{p_{2}\left(
\cdot \right) ,\gamma \left( \cdot \right) }}\left\Vert f\right\Vert
_{VL_{\Pi }^{p_{1}\left( \cdot \right) ,w_{1}\left( \cdot \right) }\left(
E\right) }.
\end{equation*}
\end{theorem}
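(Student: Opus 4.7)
The plan is to follow the architecture of the proof of Theorem \ref{teo7}: first establish the local (ball-by-ball) pointwise estimate (\ref{45}), then promote it to the vanishing Morrey norm via the Zygmund-type hypothesis (\ref{46}), and finally dispose of the maximal commutator by a pointwise domination as in Lemma \ref{lemma100}. Fix $x\in\Pi$ and $r>0$. I decompose $f=f_1+f_2$ with $f_1=f\chi_{\tilde B(x,2r)}$, and write the commutator using the $B=B(x,r)$-mean of $b$:
\begin{equation*}
[b,I_{\Omega,\alpha(\cdot)}]f(y)=(b(y)-b_{B})\,I_{\Omega,\alpha(\cdot)}f(y)-I_{\Omega,\alpha(\cdot)}\bigl((b-b_{B})f\bigr)(y).
\end{equation*}
This yields four pieces $J_1,J_2,J_3,J_4$, obtained by further splitting $f=f_1+f_2$ in each of the two terms.

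For the two local pieces $J_1=(b-b_B)I_{\Omega,\alpha(\cdot)}f_1$ and $J_3=I_{\Omega,\alpha(\cdot)}((b-b_B)f_1)$ I apply the generalized H\"older inequality on $L^{q(\cdot)}(\tilde B(x,r))$ with exponent splitting $\tfrac{1}{q(\cdot)}=\tfrac{1}{q_1(\cdot)}+\tfrac{1}{p_2(\cdot)}$, combined with the Sobolev-type estimate of Theorem \ref{teo1} for $I_{\Omega,\alpha(\cdot)}:L^{p_1(\cdot)}\to L^{q_1(\cdot)}$, the Campanato control (\ref{14}) giving $\|b-b_B\|_{L^{p_2(\cdot)}(\tilde B(x,r))}\lesssim r^{n/p_2(x)+n\gamma(x)}\|b\|_{\mathcal{C}_\Pi^{p_2(\cdot),\gamma(\cdot)}}$, and finally Lemma \ref{Lemma1}/(\ref{100}) to convert the norms into powers of $r$. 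The outcome matches the $t=2r$ contribution of the target integral in (\ref{45}).

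For the nonlocal pieces $J_2,J_4$, I repeat the device used in (\ref{13*}): rewrite $|x-y|^{-(n-\alpha(x))}=\int_{|x-y|}^{\mathrm{diam}(E)}(n-\alpha(x))\,t^{\alpha(x)-n-1}\,dt\cdot|x-y|^{\gamma'}$-type manipulation (more precisely, insert $\int_{|x-y|}^{\mathrm{diam}(E)}dt/t^{\gamma+1}$ with $\gamma>n/q(x)$), swap by Fubini, and apply the generalized H\"older inequality with the splitting $\tfrac{1}{p_1(\cdot)}+\tfrac{1}{p_2(\cdot)}+\tfrac{1}{s}+\tfrac{1}{\nu(\cdot)}=1$, using the kernel bound (\ref{0*}) for $\Omega$ and Lemma \ref{Lemma1} for the power weight. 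The new feature is the factor $(b-b_B)$ inside the integral in $J_4$: replacing $b_B$ by $b_{B(x,t)}$ introduces a telescoping error
\begin{equation*}
|b_{B(x,t)}-b_{B(x,r)}|\lesssim \bigl(1+\ln\tfrac{t}{r}\bigr)\,t^{n\gamma(x)}\,\|b\|_{\mathcal{C}_\Pi^{p_2(\cdot),\gamma(\cdot)}},
\end{equation*}
obtained by summing (\ref{14}) across the $\lceil\log_2(t/r)\rceil$ dyadic annuli between $B(x,r)$ and $B(x,t)$. This is exactly the source of the $(1+\ln(t/r))$ factor in (\ref{45}) and, in my view, the main technical obstacle: one must keep track of the variable exponent $p_2(\cdot)$ when bounding $\|(b-b_{B(x,t)})f\|_{L^{p(\cdot)}(\tilde B(x,t))}$ via H\"older and (\ref{14}), and then absorb the leftover average-shift term against the same kernel integral.

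Once (\ref{45}) is in hand, the Morrey/vanishing estimate is routine. I divide (\ref{45}) by $r^{n/q(x)}w_2(x,r)^{1/q(x)}$, take the supremum over $(x,r)\in\Pi\times(0,\infty)$, insert the definition of $\|f\|_{VL_\Pi^{p_1(\cdot),w_1(\cdot)}}$, and apply the Zygmund condition (\ref{46}) to obtain the norm bound
\begin{equation*}
\|[b,I_{\Omega,\alpha(\cdot)}]f\|_{VL_\Pi^{q(\cdot),w_2(\cdot)}}\lesssim \|b\|_{\mathcal{C}_\Pi^{p_2(\cdot),\gamma(\cdot)}}\|f\|_{VL_\Pi^{p_1(\cdot),w_1(\cdot)}}.
\end{equation*}
The vanishing property $\lim_{r\to 0}\sup_{x\in\Pi}\mathfrak{M}_{q(\cdot),w_2(\cdot)}([b,I_{\Omega,\alpha(\cdot)}]f;x,r)=0$ is checked by splitting the integral at a small threshold $\delta$: the tail over $(\delta,\mathrm{diam}(E))$ is handled by the uniform finiteness hypothesis $d_\delta<\infty$ combined with the $L_\Pi^{p_1(\cdot),w_1(\cdot)}$-bound on $f$, while the piece over $(r,\delta)$ is absorbed by the vanishing of $\mathfrak{M}_{p_1(\cdot),w_1(\cdot)}(f;\cdot,\cdot)$, both multiplied by the $\ln$ factor which is tame. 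Finally, for $[b,M_{\Omega,\alpha(\cdot)}]$, the pointwise inequality $[b,M_{\Omega,\alpha(\cdot)}]f(x)\lesssim [|b|,I_{|\Omega|,\alpha(\cdot)}](|f|)(x)$ (deduced in the same spirit as Lemma \ref{lemma100}) reduces matters to the commutator of $I_{\Omega,\alpha(\cdot)}$ already handled, completing the proof.
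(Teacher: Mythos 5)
Your proposal reproduces the paper's argument essentially verbatim: the same splitting $f=f_1+f_2$, the same four-term commutator decomposition via the mean $b_{B(x,r)}$, the same H\"older/Sobolev/Campanato treatment of the local pieces, the same Fubini-plus-kernel-bound device with the telescoping estimate $|b_{B(x,t)}-b_{B(x,r)}|\lesssim(1+\ln\tfrac{t}{r})t^{n\gamma(x)}\|b\|_{\mathcal{C}_\Pi^{p_2(\cdot),\gamma(\cdot)}}$ producing the logarithmic factor, and the same passage from (\ref{45}) to the vanishing Morrey norm through (\ref{46}) and Lemma \ref{lemma100}. Your discussion of the vanishing limit (splitting the $t$-integral at a threshold $\delta$ and invoking $d_\delta<\infty$) is in fact spelled out a bit more carefully than in the paper, but it is the same idea.
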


\begin{proof}
Since $\left[ b,M_{\Omega ,\alpha \left( \cdot \right) }\right] \left(
f\right) \lesssim \left[ b,I_{\left\vert \Omega \right\vert ,\alpha \left(
\cdot \right) }\right] \left( \left\vert f\right\vert \right) $, it suffices
to treat only the case of the operator $\left[ b,I_{\Omega ,\alpha \left(
\cdot \right) }\right] $. As in the proof of Theorem \ref{teo7}, we
represent the function $f$ in the form (\ref{42}) and have%
\begin{eqnarray*}
\left[ b,I_{\Omega ,\alpha \left( \cdot \right) }\right] f\left( x\right)
&=&\left( b\left( x\right) -b_{B\left( x,r\right) }\right) I_{\Omega ,\alpha
\left( \cdot \right) }f_{1}\left( x\right) -I_{\Omega ,\alpha \left( \cdot
\right) }\left( \left( b\left( \cdot \right) -b_{B\left( x,r\right) }\right)
f_{1}\right) \left( x\right) \\
&&+\left( b\left( x\right) -b_{B\left( x,r\right) }\right) I_{\Omega ,\alpha
\left( \cdot \right) }f_{2}\left( x\right) -I_{\Omega ,\alpha \left( \cdot
\right) }\left( \left( b\left( \cdot \right) -b_{B\left( x,r\right) }\right)
f_{2}\right) \left( x\right) \\
&\equiv &F_{1}+F_{2}+F_{3}+F_{4}.
\end{eqnarray*}%
Hence we get%
\begin{equation*}
\Vert \left[ b,I_{\Omega ,\alpha \left( \cdot \right) }\right] f\Vert
_{L^{q\left( \cdot \right) }(\tilde{B}(x,r))}\leq \left\Vert
F_{1}\right\Vert _{L^{q\left( \cdot \right) }\left( \tilde{B}(x,r)\right)
}+\left\Vert F_{2}\right\Vert _{L^{q\left( \cdot \right) }\left( \tilde{B}%
(x,r)\right) }+\left\Vert F_{3}\right\Vert _{L^{q\left( \cdot \right)
}\left( \tilde{B}(x,r)\right) }+\left\Vert F_{4}\right\Vert _{L^{q\left(
\cdot \right) }\left( \tilde{B}(x,r)\right) }.
\end{equation*}%
First, we use the H\"{o}lder's inequality such that $\frac{1}{q\left( \cdot
\right) }=\frac{1}{p_{2}\left( \cdot \right) }+\frac{1}{q_{1}\left( \cdot
\right) }$, the boundedness of $I_{\Omega ,\alpha \left( \cdot \right) }$
from $L^{p\left( \cdot \right) }$ into $L^{q\left( \cdot \right) }$ (see
Theorem \ref{teo1}) and (\ref{14}) to estimate $\left\Vert F_{1}\right\Vert
_{L^{q\left( \cdot \right) }\left( \tilde{B}(x,r)\right) }$, and we obtain%
\begin{eqnarray*}
\left\Vert F_{1}\right\Vert _{L^{q\left( \cdot \right) }\left( \tilde{B}%
(x,r)\right) } &=&\left\Vert \left( b\left( \cdot \right) -b_{B}\right)
I_{\Omega ,\alpha \left( \cdot \right) }f_{1}\left( \cdot \right)
\right\Vert _{L^{q\left( \cdot \right) }\left( \tilde{B}(x,r)\right) } \\
&\lesssim &\left\Vert \left( b\left( \cdot \right) -b_{B}\right) \right\Vert
_{L^{p_{2}\left( \cdot \right) }\left( \tilde{B}(x,r)\right) }\left\Vert
I_{\Omega ,\alpha \left( \cdot \right) }f_{1}\left( \cdot \right)
\right\Vert _{L^{q_{1}\left( \cdot \right) }\left( \tilde{B}(x,r)\right) } \\
&\lesssim &r^{\frac{n}{p_{2}\left( x\right) }+n\gamma (x)}\left\Vert
b\right\Vert _{\mathcal{C}_{\Pi }^{p_{2}\left( \cdot \right) ,\gamma \left(
\cdot \right) }}\left\Vert f_{1}\right\Vert _{L^{_{p_{1}}\left( \cdot
\right) }\left( \tilde{B}(x,r)\right) } \\
&=&\left\Vert b\right\Vert _{\mathcal{C}_{\Pi }^{p_{2}\left( \cdot \right)
,\gamma \left( \cdot \right) }}r^{\frac{n}{p_{2}\left( x\right) }+\frac{n}{%
q_{1}\left( x\right) }+n\gamma \left( x\right) }\left\Vert f\right\Vert
_{L^{_{p_{1}}\left( \cdot \right) }\left( \tilde{B}\left( x,2r\right)
\right) }\int\limits_{2r}^{diam\left( E\right) }t^{-1-\frac{n}{q_{1}\left(
x\right) }}dt \\
&\lesssim &\left\Vert b\right\Vert _{\mathcal{C}_{\Pi }^{p_{2}\left( \cdot
\right) ,\gamma \left( \cdot \right) }}r^{\frac{n}{q\left( x\right) }%
}\int\limits_{2r}^{diam\left( E\right) }\left( 1+\ln \frac{t}{r}\right)
\left\Vert f\right\Vert _{L^{_{p_{1}}\left( \cdot \right) }\left( \tilde{B}%
\left( x,t\right) \right) }t^{n\gamma \left( x\right) -\frac{n}{q_{1}\left(
x\right) }-1}dt.
\end{eqnarray*}%
Second, for $\left\Vert F_{2}\right\Vert _{L^{q\left( \cdot \right) }\left( 
\tilde{B}(x,r)\right) }$, applying the boundedness of $I_{\Omega ,\alpha
\left( \cdot \right) }$ from $L^{p\left( \cdot \right) }$ into $L^{q\left(
\cdot \right) }$ (see Theorem \ref{teo1}), generalized H\"{o}lder's
inequality such that $\frac{1}{p\left( \cdot \right) }=\frac{1}{p_{1}\left(
\cdot \right) }+\frac{1}{p_{2}\left( \cdot \right) }$, $\frac{1}{q\left(
\cdot \right) }=\frac{1}{p_{2}\left( \cdot \right) }+\frac{1}{q_{1}\left(
\cdot \right) }$ and (\ref{14}), we know that%
\begin{eqnarray*}
\left\Vert F_{2}\right\Vert _{L^{q\left( \cdot \right) }\left( \tilde{B}%
(x,r)\right) } &=&\left\Vert I_{\Omega ,\alpha \left( \cdot \right) }\left(
b\left( \cdot \right) -b_{B\left( x,r\right) }\right) f_{1}\right\Vert
_{L^{q\left( \cdot \right) }\left( \tilde{B}(x,r)\right) } \\
&\lesssim &\left\Vert \left( b\left( \cdot \right) -b_{B}\right)
f_{1}\right\Vert _{L^{p\left( \cdot \right) }\left( \tilde{B}(x,r)\right) }
\\
&\lesssim &\left\Vert \left( b\left( \cdot \right) -b_{B}\right) \right\Vert
_{L^{p_{2}\left( \cdot \right) }\left( \tilde{B}(x,r)\right) }\left\Vert
f_{1}\right\Vert _{L^{p_{1}\left( \cdot \right) }\left( \tilde{B}%
(x,r)\right) } \\
&\lesssim &\left\Vert f\right\Vert _{\mathcal{C}_{\Pi }^{p_{2}\left( \cdot
\right) ,\gamma \left( \cdot \right) }}r^{\frac{n}{p_{2}\left( x\right) }+%
\frac{n}{q_{1}\left( x\right) }+n\gamma \left( x\right) }\left\Vert
f\right\Vert _{L^{_{p_{1}}\left( \cdot \right) }\left( \tilde{B}\left(
x,2r\right) \right) }\int\limits_{2r}^{diam\left( E\right) }t^{-1-\frac{n}{%
q_{1}\left( x\right) }}dt \\
&\lesssim &\left\Vert b\right\Vert _{\mathcal{C}_{\Pi }^{p_{2}\left( \cdot
\right) ,\gamma \left( \cdot \right) }}r^{\frac{n}{q\left( x\right) }%
}\int\limits_{2r}^{diam\left( E\right) }\left( 1+\ln \frac{t}{r}\right)
\left\Vert f\right\Vert _{L^{_{p_{1}}\left( \cdot \right) }\left( \tilde{B}%
\left( x,t\right) \right) }t^{n\gamma \left( x\right) -\frac{n}{q_{1}\left(
x\right) }-1}dt.
\end{eqnarray*}%
Third, for $\left\Vert F_{3}\right\Vert _{L^{q\left( \cdot \right) }\left( 
\tilde{B}(x,r)\right) }$, similar to the proof of (\ref{13*}), when $\frac{s%
}{s-1}\leq p_{1}\left( \cdot \right) $, by Fubini's theorem, generalized H%
\"{o}lder's inequality and (\ref{0*}), we have%
\begin{eqnarray}
\left\vert I_{\Omega ,\alpha \left( \cdot \right) }f_{2}\left( x\right)
\right\vert &\lesssim &\dint\limits_{E\setminus \tilde{B}\left( x,2r\right) }%
\frac{\left\vert \Omega (z-y)\right\vert \left\vert f(y)\right\vert }{%
|x-y|^{n-\alpha \left( x\right) }}dy  \notag \\
&\lesssim &\dint\limits_{2r}^{diam\left( E\right) }\left\Vert f\right\Vert
_{L^{p_{1}\left( \cdot \right) }\left( \tilde{B}\left( x,t\right) \right)
}t^{-1-\frac{n}{q_{1}\left( x\right) }}dt.  \label{21}
\end{eqnarray}%
Thus, by generalized H\"{o}lder's inequality such that $\frac{1}{q\left(
\cdot \right) }=\frac{1}{p_{2}\left( \cdot \right) }+\frac{1}{q_{1}\left(
\cdot \right) }$, (\ref{14}) and (\ref{21}), we obtain%
\begin{eqnarray*}
\left\Vert F_{3}\right\Vert _{L^{q\left( \cdot \right) }\left( \tilde{B}%
(x,r)\right) } &=&\left\Vert \left( b\left( \cdot \right) -b_{B\left(
x,r\right) }\right) I_{\Omega ,\alpha \left( \cdot \right) }f_{2}\left(
\cdot \right) \right\Vert _{L^{q\left( \cdot \right) }\left( \tilde{B}%
(x,r)\right) } \\
&\lesssim &\left\Vert \left( b\left( \cdot \right) -b_{B\left( x,r\right)
}\right) \right\Vert _{L^{p_{2}\left( \cdot \right) }\left( \tilde{B}%
(x,r)\right) }\left\Vert I_{\Omega ,\alpha \left( \cdot \right) }f_{2}\left(
\cdot \right) \right\Vert _{L^{q_{1}\left( \cdot \right) }\left( \tilde{B}%
(x,r)\right) } \\
&\lesssim &r^{\frac{n}{p_{2}\left( x\right) }+n\gamma (x)}\left\Vert
b\right\Vert _{\mathcal{C}_{\Pi }^{p_{2}\left( \cdot \right) ,\gamma \left(
\cdot \right) }}r^{\frac{n}{q_{1}\left( x\right) }}\int\limits_{2r}^{diam%
\left( E\right) }\left\Vert f\right\Vert _{L^{_{p_{1}}\left( \cdot \right)
}\left( \tilde{B}\left( x,t\right) \right) }t^{-1-\frac{n}{q_{1}\left(
x\right) }}dt \\
&\lesssim &\left\Vert b\right\Vert _{\mathcal{C}_{\Pi }^{p_{2}\left( \cdot
\right) ,\gamma \left( \cdot \right) }}r^{\frac{n}{q\left( x\right) }%
}\int\limits_{2r}^{diam\left( E\right) }\left( 1+\ln \frac{t}{r}\right)
\left\Vert f\right\Vert _{L^{_{p_{1}}\left( \cdot \right) }\left( \tilde{B}%
\left( x,t\right) \right) }t^{n\gamma \left( x\right) -\frac{n}{q_{1}\left(
x\right) }-1}dt.
\end{eqnarray*}

Finally, we consider the term $\left\Vert F_{4}\right\Vert _{L^{q\left(
\cdot \right) }\left( \tilde{B}(x,r)\right) }=\left\Vert I_{\Omega ,\alpha
\left( \cdot \right) }\left( \left( b\left( \cdot \right) -b_{B\left(
x,r\right) }\right) f_{2}\right) \left( \cdot \right) \right\Vert
_{L^{q\left( \cdot \right) }\left( \tilde{B}(x,r)\right) }$. For $z\in
B\left( x,r\right) $, when $\frac{s}{s-1}\leq p\left( \cdot \right) $, by
the Fubini's theorem, applying the generalized H\"{o}lder's inequality and
from (\ref{0*}) and (\ref{14}) we have

$\left\vert I_{\Omega ,\alpha \left( \cdot \right) }\left( \left( b\left(
\cdot \right) -b_{B\left( x,r\right) }\right) f_{2}\right) \left( z\right)
\right\vert $

$\lesssim \dint\limits_{2r}^{diam\left( E\right) }\left\vert b\left(
y\right) -b_{B\left( x,r\right) }\right\vert \left\vert \Omega \left(
z-y\right) \right\vert \frac{\left\vert f\left( y\right) \right\vert }{%
\left\vert x-y\right\vert ^{n-\alpha \left( x\right) }}dy$

$\approx \dint\limits_{2r}^{diam\left( E\right) }\dint\limits_{2r<\left\vert
x-y\right\vert <t}\left\vert b\left( y\right) -b_{B\left( x,r\right)
}\right\vert \left\vert \Omega \left( z-y\right) \right\vert \left\vert
f\left( y\right) \right\vert dy\frac{dt}{t^{n-\alpha \left( x\right) +1}}$

$\lesssim \dint\limits_{2r}^{diam\left( E\right) }\dint\limits_{B\left(
x,t\right) }\left\vert b\left( y\right) -b_{B\left( x,t\right) }\right\vert
\left\vert \Omega \left( z-y\right) \right\vert \left\vert f\left( y\right)
\right\vert dy\frac{dt}{t^{n-\alpha \left( x\right) +1}}$

$+\dint\limits_{2r}^{diam\left( E\right) }\dint\limits_{B\left( x,t\right)
}\left\vert b_{B\left( x,r\right) }-b_{B\left( x,t\right) }\right\vert
\left\vert \Omega \left( z-y\right) \right\vert \left\vert f\left( y\right)
\right\vert dy\frac{dt}{t^{n-\alpha \left( x\right) +1}}$

$\lesssim \dint\limits_{2r}^{diam\left( E\right) }\left\Vert \left( b\left(
\cdot \right) -b_{B\left( x,r\right) }\right) \right\Vert _{L^{p_{2}\left(
\cdot \right) }\left( \tilde{B}\left( x,t\right) \right) }\left\Vert
f\right\Vert _{L^{p_{1}\left( \cdot \right) }\left( \tilde{B}\left(
x,t\right) \right) }\left\Vert \Omega \left( z-\cdot \right) \right\Vert
_{L_{s}\left( \tilde{B}\left( x,t\right) \right) }t^{-\frac{n}{q\left(
x\right) }-\frac{n}{s}-1}dt$

$+\dint\limits_{2r}^{diam\left( E\right) }\left\Vert \left( b_{B\left(
x,r\right) }-b_{B\left( x,t\right) }\right) \right\Vert _{L^{p_{2}\left(
\cdot \right) }\left( \tilde{B}\left( x,t\right) \right) }\left\Vert
f\right\Vert _{L^{p_{1}\left( \cdot \right) }\left( \tilde{B}\left(
x,t\right) \right) }\left\Vert \Omega \left( z-\cdot \right) \right\Vert
_{L_{s}\left( \tilde{B}\left( x,t\right) \right) }t^{-\frac{n}{q\left(
x\right) }-\frac{n}{s}-1}dt$

$\lesssim \dint\limits_{2r}^{diam\left( E\right) }\left\Vert \left( b\left(
\cdot \right) -b_{B\left( x,r\right) }\right) \right\Vert _{L^{p_{2}\left(
\cdot \right) }\left( \tilde{B}\left( x,t\right) \right) }\left\Vert
f\right\Vert _{L^{p_{1}\left( \cdot \right) }\left( \tilde{B}\left(
x,t\right) \right) }t^{-\frac{n}{q\left( x\right) }-1}dt$

$+\left\Vert b\right\Vert _{\mathcal{C}_{\Pi }^{p_{2}\left( \cdot \right)
,\gamma \left( \cdot \right) }}\int\limits_{2r}^{diam\left( E\right) }\left(
1+\ln \frac{t}{r}\right) \left\Vert f\right\Vert _{L^{_{p_{1}}\left( \cdot
\right) }\left( \tilde{B}\left( x,t\right) \right) }t^{n\gamma \left(
x\right) -\frac{n}{q_{1}\left( x\right) }-1}dt$%
\begin{equation}
\lesssim \left\Vert b\right\Vert _{\mathcal{C}_{\Pi }^{p_{2}\left( \cdot
\right) ,\gamma \left( \cdot \right) }}\int\limits_{2r}^{diam\left( E\right)
}\left( 1+\ln \frac{t}{r}\right) \left\Vert f\right\Vert _{L^{_{p_{1}}\left(
\cdot \right) }\left( \tilde{B}\left( x,t\right) \right) }t^{n\gamma \left(
x\right) -\frac{n}{q_{1}\left( x\right) }-1}dt.  \label{19}
\end{equation}%
Then, by (\ref{19}) we have%
\begin{eqnarray*}
\left\Vert F_{4}\right\Vert _{L^{q\left( \cdot \right) }\left( \tilde{B}%
(x,r)\right) } &=&\left\Vert I_{\Omega ,\alpha \left( \cdot \right) }\left(
\left( b\left( \cdot \right) -b_{B\left( x,r\right) }\right) f_{2}\right)
\left( x\right) \right\Vert _{L^{q\left( \cdot \right) }\left( \tilde{B}%
(x,r)\right) } \\
&\lesssim &\left\Vert b\right\Vert _{\mathcal{C}_{\Pi }^{p_{2}\left( \cdot
\right) ,\gamma \left( \cdot \right) }}r^{\frac{n}{q\left( x\right) }%
}\int\limits_{2r}^{diam\left( E\right) }\left( 1+\ln \frac{t}{r}\right)
\left\Vert f\right\Vert _{L^{_{p_{1}}\left( \cdot \right) }\left( \tilde{B}%
\left( x,t\right) \right) }t^{n\gamma \left( x\right) -\frac{n}{q_{1}\left(
x\right) }-1}dt.
\end{eqnarray*}%
Combining all the estimates of $\left\Vert F_{1}\right\Vert _{L^{q\left(
\cdot \right) }\left( \tilde{B}(x,r)\right) }$, $\left\Vert F_{2}\right\Vert
_{L^{q\left( \cdot \right) }\left( \tilde{B}(x,r)\right) }$, $\left\Vert
F_{3}\right\Vert _{L^{q\left( \cdot \right) }\left( \tilde{B}(x,r)\right) }$%
, $\left\Vert F_{4}\right\Vert _{L^{q\left( \cdot \right) }\left( \tilde{B}%
(x,r)\right) }$, we get (\ref{45}).

At last, by Definition \ref{definition}, (\ref{45}) and (\ref{46}) we get%
\begin{eqnarray*}
\left\Vert \left[ b,I_{\Omega ,\alpha \left( \cdot \right) }\right]
f\right\Vert _{VL_{\Pi }^{q\left( \cdot \right) ,w_{2}\left( \cdot \right)
}\left( E\right) } &=&\sup\limits_{x\in \Pi ,r>0}\frac{r^{-\frac{n}{q\left(
x\right) }}\left\Vert \left[ b,I_{\Omega ,\alpha \left( \cdot \right) }%
\right] f\right\Vert _{L^{q\left( \cdot \right) }\left( \tilde{B}%
(x,r)\right) }}{w_{2}(x,r)^{\frac{1}{q\left( x\right) }}} \\
&\lesssim &\Vert b\Vert _{\mathcal{C}_{\Pi }^{p_{2}\left( \cdot \right)
,\gamma \left( \cdot \right) }}\sup\limits_{x\in \Pi ,r>0}\frac{1}{%
w_{2}(x,r)^{\frac{1}{q\left( x\right) }}}\dint\limits_{2r}^{diam\left(
E\right) }\left( 1+\ln \frac{t}{r}\right) t^{n\gamma \left( x\right) -\frac{n%
}{q_{1}\left( x\right) }-1} \\
&&\times \left\Vert f\right\Vert _{L^{p_{1}\left( \cdot \right) }\left( 
\tilde{B}(x,t)\right) }dt \\
&\lesssim &\Vert b\Vert _{\mathcal{C}_{\Pi }^{p_{2}\left( \cdot \right)
,\gamma \left( \cdot \right) }}\left\Vert f\right\Vert _{VL_{\Pi
}^{p_{1}\left( \cdot \right) ,w_{1}\left( \cdot \right) }\left( E\right)
}\sup\limits_{x\in \Pi ,r>0}\frac{1}{w_{2}(x,r)^{\frac{1}{q\left( x\right) }}%
}\int\limits_{r}^{diam\left( E\right) }\left( 1+\ln \frac{t}{r}\right) \\
&&\times t^{\alpha \left( x\right) +n\gamma \left( x\right) }w_{1}^{\frac{1}{%
p_{1}\left( x\right) }}(x,t)\frac{dt}{t} \\
&\lesssim &\Vert b\Vert _{\mathcal{C}_{\Pi }^{p_{2}\left( \cdot \right)
,\gamma \left( \cdot \right) }}\left\Vert f\right\Vert _{VL_{\Pi }^{p\left(
\cdot \right) ,w_{1}\left( \cdot \right) }\left( E\right) }
\end{eqnarray*}%
and 
\begin{equation*}
\lim_{r\rightarrow 0}\sup\limits_{x\in \Pi }\frac{r^{-\frac{n}{q\left(
x\right) }}\left\Vert \left[ b,I_{\Omega ,\alpha \left( \cdot \right) }%
\right] f\right\Vert _{L^{q\left( \cdot \right) }\left( \tilde{B}%
(x,r)\right) }}{w_{2}(x,t)^{\frac{1}{q\left( x\right) }}}\lesssim
\lim_{r\rightarrow 0}\sup\limits_{x\in \Pi }\frac{r^{-\frac{n}{p\left(
x\right) }}\left\Vert f\right\Vert _{L^{p\left( \cdot \right) }\left( \tilde{%
B}(x,r)\right) }}{w_{1}(x,t)^{\frac{1}{p\left( x\right) }}}=0,
\end{equation*}%
which completes the proof of Theorem \ref{teo9}.
\end{proof}

\begin{corollary}
Let $E$, $\Omega $, $p\left( x\right) $, $q\left( x\right) $, $\alpha \left(
x\right) $ be the same as in Theorem \ref{teo7}. Suppose that $q\left( \cdot
\right) $ and $\alpha \left( \cdot \right) $ satisfy (\ref{1}). Then, for $%
\frac{s}{s-1}<p^{-}\leq p\left( \cdot \right) <\frac{n}{\alpha \left( \cdot
\right) }$ and $b\in BMO\left( E\right) $, the following pointwise estimate 
\begin{equation*}
\Vert \left[ b,I_{\Omega ,\alpha \left( \cdot \right) }\right] f\Vert
_{L^{q\left( \cdot \right) }(\tilde{B}(x,r))}\lesssim \Vert b\Vert _{BMO}r^{%
\frac{n}{q\left( x\right) }}\dint\limits_{2r}^{diam\left( E\right) }\left(
1+\ln \frac{t}{r}\right) t^{\frac{n}{q\left( x\right) }-1}\left\Vert
f\right\Vert _{L^{p\left( \cdot \right) }\left( \tilde{B}(x,t)\right) }dt
\end{equation*}%
holds for any ball $\tilde{B}(x,r)$ and for all $f\in L_{loc}^{p\left( \cdot
\right) }\left( E\right) $.

If the functions $w_{1}\left( x,r\right) $ and $w_{2}\left( x,r\right) $
satisfy (\ref{43}) as well as the following Zygmund condition%
\begin{equation*}
\int\limits_{r}^{diam\left( E\right) }\left( 1+\ln \frac{t}{r}\right) \frac{%
w_{1}^{\frac{1}{p\left( x\right) }}(x,t)}{t^{1-\alpha \left( x\right) }}%
dt\lesssim w_{2}^{\frac{1}{q\left( x\right) }}(x,r),\qquad r\in \left(
0,diam\left( E\right) \right]
\end{equation*}%
and additionally these functions satisfy the conditions (\ref{2*})-(\ref{3*}%
), 
\begin{equation*}
d_{\delta }:=\dint\limits_{\delta }^{diam\left( E\right) }\sup_{x\in \Pi
}\left( 1+\ln \frac{t}{r}\right) \frac{w_{1}^{\frac{1}{p\left( x\right) }%
}(x,t)}{t^{1-\alpha \left( x\right) }}dt<\infty ,\qquad \delta >0,
\end{equation*}%
then the operators $\left[ b,I_{\Omega ,\alpha \left( \cdot \right) }\right] 
$ and $\left[ b,M_{\Omega ,\alpha \left( \cdot \right) }\right] $ are $%
\left( VL_{\Pi }^{p\left( \cdot \right) ,w_{1}\left( \cdot \right) }\left(
E\right) \rightarrow VL_{\Pi }^{q\left( \cdot \right) ,w_{2}\left( \cdot
\right) }\left( E\right) \right) $-bounded. Moreover,%
\begin{equation*}
\left\Vert \left[ b,I_{\Omega ,\alpha \left( \cdot \right) }\right]
f\right\Vert _{VL_{\Pi }^{q\left( \cdot \right) ,w_{2}\left( \cdot \right)
}\left( E\right) }\lesssim \Vert b\Vert _{BMO}\left\Vert f\right\Vert
_{VL_{\Pi }^{p_{1}\left( \cdot \right) ,w_{1}\left( \cdot \right) }\left(
E\right) },
\end{equation*}%
\begin{equation*}
\left\Vert \left[ b,M_{\Omega ,\alpha \left( \cdot \right) }\right]
f\right\Vert _{VL_{\Pi }^{q\left( \cdot \right) ,w_{2}\left( \cdot \right)
}\left( E\right) }\lesssim \Vert b\Vert _{BMO}\left\Vert f\right\Vert
_{VL_{\Pi }^{p_{1}\left( \cdot \right) ,w_{1}\left( \cdot \right) }\left(
E\right) }.
\end{equation*}
\end{corollary}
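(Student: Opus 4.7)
The approach is to specialize the argument of Theorem \ref{teo9} to the ``$\gamma(\cdot) \equiv 0$'' case, in which the generalized Campanato space $\mathcal{C}_{\Pi}^{p_2(\cdot), \gamma(\cdot)}(E)$ collapses to $BMO(E)$. Since the proof of Theorem \ref{teo9} used two auxiliary exponents $p_1(\cdot)$ and $p_2(\cdot)$ with $\frac{1}{p(\cdot)} = \frac{1}{p_1(\cdot)} + \frac{1}{p_2(\cdot)}$, whereas the present corollary mentions only the single exponent $p(\cdot)$, I would identify the statement's $p(\cdot)$ with the $p_1(\cdot)$ of Theorem \ref{teo9} and freely introduce a convenient auxiliary $p_2(\cdot) \in \mathcal{P}^{\log}(E)$ with $1/p_2(\cdot)$ small enough for every downstream exponent condition of Theorem \ref{teo9} to remain admissible.

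In detail, I would reproduce the four-term splitting $[b, I_{\Omega, \alpha(\cdot)}]f = F_1 + F_2 + F_3 + F_4$ from the proof of Theorem \ref{teo9} starting from $f = f_1 + f_2$ with $f_1 = f\chi_{\tilde{B}(x, 2r)}$, and bound each $F_i$ exactly as there, but with the Campanato estimate (\ref{14}) replaced by the two standard $BMO$ substitutes: the (variable exponent form of the) John--Nirenberg inequality
\[
\left\| b - b_{B(x,r)} \right\|_{L^{p_2(\cdot)}(\tilde{B}(x,r))} \lesssim \|b\|_{BMO}\, r^{n/p_2(x)}
\]
and the nested-ball oscillation estimate $|b_{B(x,r)} - b_{B(x,t)}| \lesssim \|b\|_{BMO}(1 + \ln(t/r))$ valid for $0 < r \le t$. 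The rough kernel $\Omega$ is absorbed through (\ref{0*}) wherever it appears, exactly as in the proof of Theorem \ref{teo9}; the hypothesis $s/(s-1) < p^-$ is what makes the relevant H\"older step compatible with (\ref{0*}).

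The main obstacle is the term $F_4 = I_{\Omega, \alpha(\cdot)}\bigl((b - b_{B(x,r)})\, f_2\bigr)$, where one must justify the Fubini representation $|x-y|^{\alpha(x)-n} \approx \int_{|x-y|}^{diam(E)} t^{\alpha(x) - n - 1}\, dt$ and then decompose $b(y) - b_{B(x,r)} = [b(y) - b_{B(x,t)}] + [b_{B(x,t)} - b_{B(x,r)}]$ under the resulting double integral, treating each summand by the generalized H\"older inequality together with the appropriate $BMO$ substitute. This is precisely the step that produces the logarithmic factor $1 + \ln(t/r)$ in the pointwise estimate, and it mirrors display (\ref{19}) with $\gamma(\cdot) \equiv 0$.

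Summing the four pieces yields the claimed pointwise bound for $\|[b, I_{\Omega, \alpha(\cdot)}]f\|_{L^{q(\cdot)}(\tilde{B}(x,r))}$. The passage from this pointwise inequality to the norm estimate in $VL_{\Pi}^{q(\cdot), w_2(\cdot)}(E)$---divide by $w_2(x,r)^{1/q(x)}$, take the supremum, invoke the new Zygmund condition to absorb $w_1$ past the integral, and verify the vanishing limit $\lim_{r \to 0}\sup_{x \in \Pi} \mathfrak{M}_{q(\cdot), w_2(\cdot)}([b, I_{\Omega, \alpha(\cdot)}]f;\, x, r) = 0$ using $d_\delta < \infty$ jointly with (\ref{2*})--(\ref{3*})---goes through verbatim as in the closing display of the proof of Theorem \ref{teo9}. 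The companion estimate for $[b, M_{\Omega, \alpha(\cdot)}]$ follows at once from the pointwise domination $[b, M_{\Omega, \alpha(\cdot)}]f \lesssim [b, I_{|\Omega|, \alpha(\cdot)}](|f|)$ already used repeatedly in the paper.
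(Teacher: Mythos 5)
Your overall strategy — rerun the proof of Theorem \ref{teo9} with $\gamma(\cdot)\equiv 0$, and replace the Campanato estimate (\ref{14}) by the variable-exponent John--Nirenberg bound $\| b-b_{B(x,r)}\|_{L^{p_2(\cdot)}(\tilde B(x,r))}\lesssim \|b\|_{BMO}\,r^{n/p_2(x)}$ together with the nested-ball oscillation estimate $|b_{B(x,r)}-b_{B(x,t)}|\lesssim \|b\|_{BMO}(1+\ln(t/r))$ — is the right idea, and the paper gives no argument of its own, so this is the natural way to justify the corollary. You also correctly identify the term $F_4$ and the Fubini step as the source of the logarithm.

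However, the exponent bookkeeping in your reduction does not close, and the phrase ``with $1/p_2(\cdot)$ small enough'' cannot fix it. If you set the corollary's $p(\cdot)$ equal to the theorem's $p_1(\cdot)$ and then choose some finite $p_2(\cdot)$, the theorem's output exponent is $q_{\mathrm{thm}}$ with $\frac{1}{q_{\mathrm{thm}}}=\frac{1}{p_1}+\frac{1}{p_2}-\frac{\alpha}{n}$, which is strictly larger than the corollary's target $\frac{1}{q}=\frac{1}{p}-\frac{\alpha}{n}$; so the theorem lands you in the strictly weaker space $VL_{\Pi}^{q_{\mathrm{thm}},w_2}\supsetneq VL_{\Pi}^{q,w_2}$, and there is no embedding in the needed direction. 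You cannot send $1/p_2\to 0$ because the Campanato/John--Nirenberg device requires a finite $p_2(\cdot)\in\mathcal{P}^{\log}(E)$ and the implied constants deteriorate. Indeed, if one insists on the Sobolev exponent $1/q_1=1/p_1-\alpha/n$ for the $I_{\Omega,\alpha(\cdot)}f_1$ factor and then peels off the $(b-b_{B})$ factor in $L^{p_2}$, one is forced to $p_2=\infty$, where BMO gives no $L^\infty$ control. So either (i) the corollary is meant to inherit \emph{both} input $p_1$ and output $q$ from Theorem \ref{teo9} verbatim (note its displayed conclusion still reads $\|f\|_{VL_{\Pi}^{p_1(\cdot),w_1(\cdot)}}$, and the pointwise bound's $p$ and the $t^{n/q(x)-1}$ are simply miscopied from $p_1$ and $t^{-n/q_1(x)-1}$), in which case your proof should say so explicitly and not re-derive a $p\to q$ relation that the argument does not actually yield; or (ii) one genuinely wants the lossless endpoint $1/q=1/p-\alpha/n$, which requires a different mechanism (sharp maximal function or a ball-by-ball balancing of the H\"older losses using the compact support of $f_1$), none of which your proposal supplies. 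As written, the proposal proves a weaker statement than the one announced.
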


For $\alpha \left( x\right) =0$ in Theorem \ref{teo9}, we get the following
new result:

\begin{corollary}
\label{Corollary2}Let $E$, $\Omega $, $p\left( x\right) $ be the same as in
Theorem \ref{teo7}. Let also $\frac{1}{p\left( \cdot \right) }=\frac{1}{%
p_{1}\left( \cdot \right) }+\frac{1}{p_{2}\left( \cdot \right) }$ and $b\in 
\mathcal{C}_{\Pi }^{p_{2}\left( \cdot \right) ,\gamma \left( \cdot \right)
}\left( E\right) $. Suppose that $p_{1}\left( \cdot \right) $ and $%
p_{2}\left( \cdot \right) $ satisfy (\ref{1}). Then, for $\frac{s}{s-1}%
<p^{-}\leq p\left( \cdot \right) \leq p^{+}<\infty $, the following
pointwise estimate 
\begin{equation*}
\Vert \left[ b,T_{\Omega }\right] f\Vert _{L^{p\left( \cdot \right) }(\tilde{%
B}(x,r))}\lesssim \Vert b\Vert _{\mathcal{C}_{\Pi }^{p_{2}\left( \cdot
\right) ,\gamma \left( \cdot \right) }}r^{\frac{n}{p\left( x\right) }%
}\dint\limits_{2r}^{diam\left( E\right) }\left( 1+\ln \frac{t}{r}\right)
t^{n\gamma \left( x\right) -\frac{n}{p_{1}\left( x\right) }-1}\left\Vert
f\right\Vert _{L^{p_{1}\left( \cdot \right) }\left( \tilde{B}(x,t)\right) }dt
\end{equation*}%
holds for any ball $\tilde{B}(x,r)$ and for all $f\in L_{loc}^{p_{1}\left(
\cdot \right) }\left( E\right) $.

If the function $w\left( x,r\right) $ satisfies (\ref{43}) as well as the
following Zygmund condition%
\begin{equation*}
\int\limits_{r}^{diam\left( E\right) }\left( 1+\ln \frac{t}{r}\right) \frac{%
w^{\frac{1}{p_{1}\left( x\right) }}(x,t)}{t^{1-n\gamma \left( x\right) }}%
dt\lesssim w^{\frac{1}{p\left( x\right) }}(x,r),\qquad r\in \left(
0,diam\left( E\right) \right]
\end{equation*}%
and additionally this function satisfies the conditions (\ref{2*})-(\ref{3*}%
), 
\begin{equation*}
d_{\delta }:=\dint\limits_{\delta }^{diam\left( E\right) }\sup_{x\in \Pi
}\left( 1+\ln \frac{t}{r}\right) \frac{w^{\frac{1}{p_{1}\left( x\right) }%
}(x,t)}{t^{1-n\gamma \left( x\right) }}dt<\infty ,\qquad \delta >0,
\end{equation*}%
then the operators $\left[ b,T_{\Omega }\right] $ and $\left[ b,M_{\Omega }%
\right] $ are $\left( VL_{\Pi }^{p_{1}\left( \cdot \right) ,w\left( \cdot
\right) }\left( E\right) \rightarrow VL_{\Pi }^{p\left( \cdot \right)
,w\left( \cdot \right) }\left( E\right) \right) $-bounded. Moreover,%
\begin{equation*}
\left\Vert \left[ b,T_{\Omega }\right] f\right\Vert _{VL_{\Pi }^{p\left(
\cdot \right) ,w\left( \cdot \right) }\left( E\right) }\lesssim \Vert b\Vert
_{\mathcal{C}_{\Pi }^{p_{2}\left( \cdot \right) ,\gamma \left( \cdot \right)
}}\left\Vert f\right\Vert _{VL_{\Pi }^{p_{1}\left( \cdot \right) ,w\left(
\cdot \right) }\left( E\right) },
\end{equation*}%
\begin{equation*}
\left\Vert \left[ b,M_{\Omega }\right] f\right\Vert _{VL_{\Pi }^{p\left(
\cdot \right) ,w\left( \cdot \right) }\left( E\right) }\lesssim \Vert b\Vert
_{\mathcal{C}_{\Pi }^{p_{2}\left( \cdot \right) ,\gamma \left( \cdot \right)
}}\left\Vert f\right\Vert _{VL_{\Pi }^{p_{1}\left( \cdot \right) ,w\left(
\cdot \right) }\left( E\right) }.
\end{equation*}
\end{corollary}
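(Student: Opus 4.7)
The plan is to derive Corollary~\ref{Corollary2} as the $\alpha(\cdot)\equiv 0$ specialization of Theorem~\ref{teo9}. In that limit one has $I_{\Omega,\alpha(\cdot)}=T_\Omega$, $M_{\Omega,\alpha(\cdot)}=M_\Omega$, and the Sobolev-type relation $\frac{1}{q_1(\cdot)}=\frac{1}{p_1(\cdot)}-\frac{\alpha(\cdot)}{n}$ collapses to $q_1(\cdot)=p_1(\cdot)$, while the corresponding relation for $q(\cdot)$ forces $q(\cdot)=p(\cdot)$. Thus the pointwise estimate and Zygmund condition asserted in Corollary~\ref{Corollary2} are precisely \eqref{45} and \eqref{46} specialized to $\alpha=0$, and the proof runs in parallel with that of Theorem~\ref{teo9}, with $T_\Omega$ replacing $I_{\Omega,\alpha(\cdot)}$ wherever $L^{p(\cdot)}$-boundedness is invoked.

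To mirror the proof machinery of Theorem~\ref{teo9}, I would split $f=f_1+f_2$ with $f_1=f\chi_{\tilde{B}(x,2r)}$ and decompose
\[
[b,T_\Omega]f = (b-b_{B(x,r)})T_\Omega f_1 - T_\Omega((b-b_{B(x,r)})f_1) + (b-b_{B(x,r)})T_\Omega f_2 - T_\Omega((b-b_{B(x,r)})f_2),
\]
estimating the $L^{p(\cdot)}(\tilde{B}(x,r))$-norm of each of the four summands $F_1,F_2,F_3,F_4$ separately. For $F_1$ and $F_2$, I would apply the generalized H\"older inequality with $\frac{1}{p(\cdot)}=\frac{1}{p_1(\cdot)}+\frac{1}{p_2(\cdot)}$, invoke the Campanato control \eqref{14} to bound $\|b-b_{B(x,r)}\|_{L^{p_2(\cdot)}(\tilde{B}(x,r))}\lesssim r^{n/p_2(x)+n\gamma(x)}\|b\|_{\mathcal{C}_\Pi^{p_2(\cdot),\gamma(\cdot)}}$, and use the $L^{p(\cdot)}$- and $L^{p_1(\cdot)}$-boundedness of $T_\Omega$ supplied by Corollary~\ref{Corollary0}. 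For $F_3$, I would first establish, via the Fubini--H\"older argument underlying \eqref{13*} specialized to $\alpha=0$, the far-field pointwise bound $|T_\Omega f_2(z)|\lesssim \int_{2r}^{\operatorname{diam}(E)}\|f\|_{L^{p_1(\cdot)}(\tilde{B}(x,t))}\,t^{-n/p_1(x)-1}\,dt$ for $z\in\tilde{B}(x,r)$, and then multiply by the H\"older-bounded factor $\|b-b_{B(x,r)}\|_{L^{p_2(\cdot)}(\tilde{B}(x,r))}$.

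The principal obstacle, exactly as in Theorem~\ref{teo9}, is the term $F_4=T_\Omega((b-b_{B(x,r)})f_2)$. I would split $b(y)-b_{B(x,r)}=(b(y)-b_{B(x,t)})+(b_{B(x,t)}-b_{B(x,r)})$ on each dyadic annulus $\{y:|x-y|\sim t\}$. The first summand contributes a term estimated as in $F_3$, while the second yields the logarithmic factor $1+\ln(t/r)$ via the standard telescoping bound $|b_{B(x,r)}-b_{B(x,t)}|\lesssim (1+\ln(t/r))\,\|b\|_{\mathcal{C}_\Pi^{p_2(\cdot),\gamma(\cdot)}}\,t^{n\gamma(x)}$, in direct analogy with the chain of inequalities leading to \eqref{19}. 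Throughout, one must verify that the hypothesis $\frac{s}{s-1}<p^-\leq p(\cdot)\leq p^+<\infty$ is compatible with the H\"older triple $\frac{1}{p_1(\cdot)}+\frac{1}{s}+\frac{1}{\nu(\cdot)}=1$ required by \eqref{0*} to absorb the rough kernel $\Omega\in L_s(S^{n-1})$.

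Combining the four estimates yields the displayed pointwise inequality. The boundedness $[b,T_\Omega]:VL_\Pi^{p_1(\cdot),w(\cdot)}(E)\to VL_\Pi^{p(\cdot),w(\cdot)}(E)$ then follows by dividing by $w(x,r)^{1/p(x)}\,r^{n/p(x)}$, taking the supremum over $x\in\Pi$ and $r>0$, and invoking the Zygmund condition to absorb the resulting $t$-integral by $w(x,r)^{1/p(x)}$; the vanishing property is transferred verbatim from $f$ using conditions \eqref{2*}--\eqref{3*}, exactly as at the close of the proof of Theorem~\ref{teo9}. The corresponding bound for $[b,M_\Omega]$ follows from the pointwise domination of $M_\Omega$ by $\widetilde{T}_{|\Omega|}$ noted after Lemma~\ref{lemma100}, which upgrades to a commutator-level domination and allows one to reuse the estimate already established for $[b,T_\Omega]$.
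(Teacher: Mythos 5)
Your proposal is correct and follows exactly the route the paper intends: Corollary~\ref{Corollary2} is stated immediately after the remark ``For $\alpha(x)=0$ in Theorem~\ref{teo9}, we get the following new result,'' and the paper supplies no independent proof. Your observation that $\alpha(\cdot)\equiv 0$ forces $q(\cdot)=p(\cdot)$, $q_1(\cdot)=p_1(\cdot)$, $I_{\Omega,\alpha(\cdot)}=T_\Omega$, $M_{\Omega,\alpha(\cdot)}=M_\Omega$, and collapses \eqref{45} and \eqref{46} (with $w_1=w_2=w$) to precisely the displayed pointwise estimate and Zygmund condition is the entire content of the paper's deduction; the subsequent re-derivation of the four-term decomposition $F_1,\dots,F_4$ and the telescoping/logarithmic estimate for $F_4$ is extra elaboration that correctly mirrors the proof of Theorem~\ref{teo9} and is consistent with it.
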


From Corollary \ref{Corollary2}, we get the following:

\begin{corollary}
Let $E$, $\Omega $, $p\left( x\right) $ be the same as in Theorem \ref{teo7}%
. Then, for $\frac{s}{s-1}<p^{-}\leq p\left( \cdot \right) \leq p^{+}<\infty 
$ and $b\in BMO\left( E\right) $, the following pointwise estimate 
\begin{equation*}
\Vert \left[ b,T_{\Omega }\right] f\Vert _{L^{p\left( \cdot \right) }(\tilde{%
B}(x,r))}\lesssim \Vert b\Vert _{BMO}r^{\frac{n}{p\left( x\right) }%
}\dint\limits_{2r}^{diam\left( E\right) }\left( 1+\ln \frac{t}{r}\right) t^{%
\frac{n}{p\left( x\right) }-1}\left\Vert f\right\Vert _{L^{p\left( \cdot
\right) }\left( \tilde{B}(x,t)\right) }dt
\end{equation*}%
holds for any ball $\tilde{B}(x,r)$ and for all $f\in L_{loc}^{p\left( \cdot
\right) }\left( E\right) $.

If the function $w\left( x,r\right) $ satisfies (\ref{43}) as well as the
following Zygmund condition%
\begin{equation*}
\int\limits_{r}^{diam\left( E\right) }\left( 1+\ln \frac{t}{r}\right) \frac{%
w^{\frac{1}{p\left( x\right) }}(x,t)}{t}dt\lesssim w^{\frac{1}{p\left(
x\right) }}(x,r),\qquad r\in \left( 0,diam\left( E\right) \right]
\end{equation*}%
and additionally this function satisfies the conditions (\ref{2*})-(\ref{3*}%
), 
\begin{equation*}
d_{\delta }:=\dint\limits_{\delta }^{diam\left( E\right) }\sup_{x\in \Pi
}\left( 1+\ln \frac{t}{r}\right) \frac{w^{\frac{1}{p\left( x\right) }}(x,t)}{%
t}dt<\infty ,\qquad \delta >0,
\end{equation*}%
then the operators $\left[ b,T_{\Omega }\right] $ and $\left[ b,M_{\Omega }%
\right] $ are bounded on $VL_{\Pi }^{p\left( \cdot \right) ,w\left( \cdot
\right) }\left( E\right) $. Moreover,%
\begin{equation*}
\left\Vert \left[ b,T_{\Omega }\right] f\right\Vert _{VL_{\Pi }^{p\left(
\cdot \right) ,w\left( \cdot \right) }\left( E\right) }\lesssim \Vert b\Vert
_{BMO}\left\Vert f\right\Vert _{VL_{\Pi }^{p\left( \cdot \right) ,w\left(
\cdot \right) }\left( E\right) },
\end{equation*}%
\begin{equation*}
\left\Vert \left[ b,M_{\Omega }\right] f\right\Vert _{VL_{\Pi }^{p\left(
\cdot \right) ,w\left( \cdot \right) }\left( E\right) }\lesssim \Vert b\Vert
_{BMO}\left\Vert f\right\Vert _{VL_{\Pi }^{p\left( \cdot \right) ,w\left(
\cdot \right) }\left( E\right) }.
\end{equation*}
\end{corollary}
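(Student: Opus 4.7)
The plan is to derive this corollary as the $\alpha(\cdot)\equiv 0$ and $\gamma(\cdot)\equiv 0$ specialization of the commutator estimate already established in Corollary \ref{Corollary2}, relying on the embedding $BMO(E)\subset \mathcal{C}_\Pi^{p_2(\cdot),0}(E)$ with $\|b\|_{\mathcal{C}_\Pi^{p_2(\cdot),0}}\lesssim\|b\|_{BMO}$ for any bounded $p_2(\cdot)\in\mathcal{P}^{\log}(E)$ (a consequence of the variable-exponent John--Nirenberg inequality). With this embedding, Corollary \ref{Corollary2} applied to the H\"older splitting $1/p(\cdot) = 1/p_1(\cdot) + 1/p_2(\cdot)$ immediately produces a pointwise estimate and a $VL_\Pi^{p_1(\cdot),w(\cdot)}\to VL_\Pi^{p(\cdot),w(\cdot)}$ boundedness of $[b,T_\Omega]$ and $[b,M_\Omega]$; absorbing the factor $r^{n/p_2(x)}$ into $r^{n/p(x)}$ and letting $p_2(\cdot)$ play the role of a free parameter recovers the statement advertised in the corollary, with $L^{p(\cdot)}$ on both sides.

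If one prefers a self-contained argument, I would repeat the four-term decomposition from the proof of Theorem \ref{teo9}: write
\[
[b,T_\Omega]f = (b-b_{B(x,r)})\,T_\Omega f_1 - T_\Omega\bigl((b-b_{B(x,r)})f_1\bigr) + (b-b_{B(x,r)})\,T_\Omega f_2 - T_\Omega\bigl((b-b_{B(x,r)})f_2\bigr),
\]
with $f_1 = f\chi_{\tilde{B}(x,2r)}$ and $f_2 = f-f_1$. The two local pieces are estimated by the variable-exponent generalized H\"older inequality, Corollary \ref{Corollary0} (giving the $L^{p(\cdot)}$-boundedness of $T_\Omega$), and the John--Nirenberg bound $\|b-b_{B(x,r)}\|_{L^{p_2(\cdot)}(\tilde{B}(x,r))}\lesssim r^{n/p_2(x)}\|b\|_{BMO}$. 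The two far-field pieces are handled by the Fubini/dyadic computation used in \eqref{13*} and \eqref{19}: decompose $b-b_{B(x,r)} = (b-b_{B(x,t)})+(b_{B(x,t)}-b_{B(x,r)})$, control the second summand via the classical $BMO$ identity $|b_{B(x,r)}-b_{B(x,t)}|\lesssim (1+\ln(t/r))\|b\|_{BMO}$ (this is where the logarithmic factor in the statement is born), and bound $\|\Omega(z-\cdot)\|_{L_s(\tilde{B}(x,t))}$ uniformly in $z\in B(x,r)$ by \eqref{0*}. Summing the four contributions yields precisely the advertised pointwise bound.

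The passage from the pointwise estimate to the Morrey-norm estimate is the final-step template already used in Theorems \ref{teo7} and \ref{teo9}: divide by $r^{n/p(x)}w(x,r)^{1/p(x)}$, commute the supremum in $x,r$ with the $t$-integral, apply the Zygmund condition on $w$ to absorb the $t$-integral into $w(x,r)^{1/p(x)}$, and handle the vanishing property by splitting $\int_r^{diam(E)} = \int_r^\delta + \int_\delta^{diam(E)}$ and letting first $r\to 0^+$ and then $\delta\to 0^+$, using $d_\delta<\infty$. The commutator $[b,M_\Omega]$ is controlled by the same chain via the pointwise domination $[b,M_\Omega]f\lesssim [b,T_{|\Omega|}](|f|)$, in the spirit of Lemma \ref{lemma100}. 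The main obstacle is the variable-exponent John--Nirenberg step and the associated bookkeeping that lets one collapse the auxiliary pair $p_1(\cdot),p_2(\cdot)$ back to the uniform $L^{p(\cdot)}$ output advertised in the statement; everything else is a routine specialization of the machinery already in place.
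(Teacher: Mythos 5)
Your proposal tracks the paper's own (implicit) route: the paper offers no proof, only the sentence ``From Corollary~\ref{Corollary2}, we get the following,'' and you flesh out both the specialization view and a direct repetition of the four-term decomposition from Theorem~\ref{teo9}. However, the obstacle you flag at the end is not merely ``bookkeeping'' --- it is a genuine gap, and it exists in the paper's statement as well, so you are right to be uneasy about it.

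The issue: the Campanato-space argument of Theorem~\ref{teo9}/Corollary~\ref{Corollary2} rests on the H\"older split $\frac{1}{p(\cdot)}=\frac{1}{p_1(\cdot)}+\frac{1}{p_2(\cdot)}$, and it is the $L^{p_1(\cdot)}$ norm of $f$ (with $p_1(\cdot)\neq p(\cdot)$) that emerges on the right-hand side of~\eqref{45}. The corollary in question asserts the \emph{same} exponent $p(\cdot)$ on both sides of the pointwise bound and of the Morrey-norm bound. Forcing $p_1(\cdot)=p(\cdot)$ would require $p_2(\cdot)=\infty$, but $\mathcal{C}_\Pi^{\infty,0}$ is $L^{\infty}$-type, strictly smaller than $BMO$. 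There is therefore no choice of ``free parameter'' $p_2(\cdot)$, and no amount of absorbing $r^{n/p_2(x)}$, that yields the advertised $L^{p(\cdot)}\to L^{p(\cdot)}$ estimate for arbitrary $b\in BMO$ from Corollary~\ref{Corollary2}. Your self-contained sketch inherits the same defect: the two local pieces $F_1,F_2$ are still estimated through a $p_1/p_2$-H\"older inequality, so the output necessarily carries $\|f\|_{L^{p_1(\cdot)}}$. The far-field pieces $F_3,F_4$ are fine with the single exponent $p(\cdot)$ (there the auxiliary exponent falls on $b$ and $\Omega$ only), so the problem is localized in $F_1,F_2$.

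A correct derivation of a genuine $BMO$ commutator estimate with matching exponents cannot go through the Campanato split at all on the local term. Instead one must bound $\|[b,T_\Omega]f_1\|_{L^{p(\cdot)}(E)}$ directly using a Coifman--Rochberg--Weiss-type theorem: the global $L^{p(\cdot)}$-boundedness of $[b,T_\Omega]$ for $b\in BMO$ in the variable-exponent setting. This input is not established or cited anywhere in the paper (Corollary~\ref{Corollary0} gives the boundedness of $T_\Omega$ itself, not of its commutator), and your proposal also does not invoke it. So as written, neither your proposal nor the paper's one-line justification actually proves the corollary. Either the statement should be reformulated with $p_1(\cdot)$ in place of $p(\cdot)$ on the source side (to match Corollary~\ref{Corollary2}), or the $L^{p(\cdot)}$-boundedness of $[b,T_\Omega]$ must be imported and the local term handled without the H\"older split.
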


\section{Acknowledgement}

F. G\"{u}rb\"{u}z was partly supported by the grant of Hakkari University
Scientific Research Project (No. FM18BAP1) under the research project "Some
estimates for rough Riesz type potential operator with variable order and
rough fractional maximal{\ operator with variable order both on }generalized
variable exponent Morrey spaces and vanishing generalized variable exponent
Morrey spaces". The researches of the other authors were partly supported by
Institution of Higher Education Scientific Research Project in Ningxia (No.
NGY2017011) and Natural Science Foundation of China (No. 11461053;
11762017), respectively.

\end{document}